%
%
\documentclass[final]{jotart}
\usepackage{amsmath}
\usepackage{amssymb}
\usepackage{IEEEtrantools}
\usepackage[arrow, matrix, curve]{xy}
\usepackage[hidelinks]{hyperref}
%
\theoremstyle{proclaim}
\newtheorem{theorem}{Theorem}[section]
\newtheorem{lemma}[theorem]{Lemma}

\newtheorem{proposition}[theorem]{Proposition}
\theoremstyle{statement}
\newtheorem{remark}[theorem]{Remark}
\newtheorem{definition}[theorem]{Definition}
\newtheorem{example}[theorem]{Example}
\theoremstyle{fancyproclaim}

\numberwithin{equation}{section}

%
%
\newcommand{\id}{\operatorname{id}}
\newcommand{\ev}{\operatorname{ev}}
\newcommand{\ran}{\operatorname{ran}}
\newcommand{\tpartial}{\widetilde{\partial}}
\newcommand{\hotimes}{\mathbin{\widehat{\otimes}}}
\newcommand{\la}{\curvearrowright}
\newcommand{\ra}{\curvearrowleft}
\newcommand{\C}{\mathbb{C}}
\newcommand{\D}{\mathcal{D}}
\newcommand{\M}{\mathcal{M}}
\newcommand{\N}{\mathbb{N}}
\renewcommand{\O}{\mathcal{O}}
%
\begin{document}
\issueinfo{00}{0}{0000} 
\commby{Editor}
\pagespan{101}{110}
\date{Month dd, yyyy}
\title[Free and Cyclic Differential Calculus]{A Note on the Free and Cyclic Differential Calculus}
\dedicatory{Dedicated to Dan-Virgil Voiculescu on the occasion of his 70th birthday.}
\author[T. Mai {\protect \and} R. Speicher]{Tobias Mai {\protect
\and} Roland Speicher}
\address{TOBIAS MAI, Saarland University, Department of Mathematics,\linebreak D-66123 Saarbr\"ucken, Germany}
\email{mai@math.uni-sb.de}
\address{ROLAND SPEICHER, Saarland University, Department of Mathematics, D-66123 Saarbr\"ucken, Germany}
\email{speicher@math.uni-sb.de}
\begin{abstract} 
In 2000, Voiculescu proved an algebraic characterization of cyclic gradients of noncommutative polynomials. We extend this remarkable result in two different directions: first, we obtain an analogous characterization of free gradients; second, we lift both of these results to Voiculescu's fundamental framework of multivariable generalized difference quotient rings.
For that purpose, we develop the concept of divergence operators, for both free and cyclic gradients, and study the associated (weak) grading and cyclic symmetrization operators, respectively.
One the one hand, this puts a new complexion on the initial polynomial case, and on the other hand, it provides a uniform framework within which also other examples -- such as a discrete version of the It\^o stochastic integral -- can be treated.
\end{abstract}
\begin{subjclass}
16W25, 16S10, 16T10, 46L54.
\end{subjclass}
\begin{keywords}
noncommutative derivative, cyclic derivative, Poincar\'e lemma, multivariable generalized difference quotient rings, divergence, noncommutative polynomials.
\end{keywords}
\maketitle

\section{INTRODUCTION}

The analytic treatment of free probability theory rests on the noncommutative differential calculus for the noncommutative and cyclic derivatives.

On the unital complex algebra $\C\langle x_1,\dots,x_n\rangle$ of all noncommutative polynomials in $n$ formal non-commuting variables $x_1,\dots,x_n$, the \emph{noncommutative derivatives} $\partial_1,\dots,\partial_n$ are certain derivations which take their values in the $\C\langle x_1,\dots,x_n\rangle$-bimodule $\C\langle x_1,\dots,x_n\rangle \otimes \C\langle x_1,\dots,x_n\rangle$, while the \emph{cyclic derivatives} $\D_1,\dots,\D_n$ are linear maps which are canonically associated to $\partial_1,\dots,\partial_n$; see Section \ref{subsec:NC_cyclic_derivatives} for the precise definitions.

In classical analysis, the Poincar\'e lemma provides, loosely spoken, a criterion to decide if a given vector-valued function is the gradient of a scalar-valued function. In this paper, we are concerned with noncommutative (algebraic) counterparts of this result, where the gradient is built with respect to the noncommutative and the cyclic derivatives.

For the case of cyclic gradients, Voiculescu obtained such a characterization in \cite{Voi00a}. His proof is based on explicit computations on the level of monomials. Our aim is to give an alternative approach that relies more on the arithmetic and the differential calculus on the algebra of noncommutative polynomials. In fact, we use the fundamental language of \emph{generalized difference quotient rings}, as introduced by Voiculescu in \cite{Voi00b} and refined in \cite{Voi04}, in order to uncover the general mechanism behind this result, hoping to open the door for further interesting applications and generalizations, for instance in the context of \cite{Voi02,Voi04,Voi10,Voi19}. We point out that, especially in the case of a single variable, generalized difference quotient rings are also known as \emph{infinitesimal coalgebras} \cite{JR79}, \emph{Newtonian coalgebras} \cite{ER98}, and \emph{infinitesimal or $\epsilon$-bialgebras} \cite{A00}; see also \cite{L03}, where the multivariable case was addressed, and \cite{Voi10}.

In either case, it becomes relevant that $\C\langle x_1,\dots,x_n\rangle$ is in fact a graded algebra with respect to the natural grading induced by the polynomial degree, which corresponds to the decomposition into the eigenspaces of the \emph{number operator} $N$; see Section \ref{subsec:number_operator} for the precise definitions.
More precisely, we reprove in this way the following theorem, which is essentially \cite[Theorem 1]{Voi00a}.

\begin{theorem}\label{thm:characterization_cyclic_gradients}
Let $p=(p_1,\dots,p_n) \in \C\langle x_1,\dots,x_n\rangle^n$ be an $n$-tuple of noncommutative polynomials. Then the following statements are equivalent:
\begin{enumerate}
 \item\label{it:cyclic_gradients-i} $p$ is a cyclic gradient, i.e., there exists $q\in \C\langle x_1,\dots,x_n\rangle$ such that $$p_i = \D_i q \qquad\text{for all $i=1,\dots,n$}.$$
 \item\label{it:cyclic_gradients-ii} We have that $$\sum^n_{j=1} [x_j, p_j] = 0.$$
 \item\label{it:cyclic_gradients-iii} For $i,j=1,\dots,n$, we have that $$\partial_i p_j = \sigma \big(\partial_j p_i\big).$$
 \item\label{it:cyclic_gradients-iv} For $i=1,\dots,n$, it holds true that $$\D_i \Big( \sum^n_{j=1} x_j p_j \Big) = (N+\id) p_i.$$
\end{enumerate}
When the equivalent conditions are satisfied, we can find a noncommutative polynomial $q\in \C\langle x_1,\dots,x_n\rangle$ satisfying $\D q = p$ by solving the equation $N q = \sum^n_{j=1} x_j p_j$.
\end{theorem}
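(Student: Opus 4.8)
The plan is to establish the chain $(\ref{it:cyclic_gradients-i}) \Rightarrow (\ref{it:cyclic_gradients-iii}) \Rightarrow (\ref{it:cyclic_gradients-iv}) \Rightarrow (\ref{it:cyclic_gradients-i})$ together with $(\ref{it:cyclic_gradients-i}) \Rightarrow (\ref{it:cyclic_gradients-ii})$, and then to close the loop with the single substantial step $(\ref{it:cyclic_gradients-ii}) \Rightarrow (\ref{it:cyclic_gradients-i})$; the implication $(\ref{it:cyclic_gradients-iv}) \Rightarrow (\ref{it:cyclic_gradients-i})$ will at the same time yield the concluding assertion about solving $Nq=\sum_{j}x_jp_j$. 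I will repeatedly use three elementary facts about the number operator $N$ on $\C\langle x_1,\dots,x_n\rangle$: the Euler-type identity $Nr=\sum_{j=1}^n \mu_j(\partial_j r)$, where $\mu_j(a\otimes b)=ax_jb$ reinserts $x_j$ between the two tensor legs; the commutation rule $\D_i\circ N=(N+\id)\circ\D_i$ (together with its analogue for the $\partial_i$ on $\C\langle x_1,\dots,x_n\rangle\otimes\C\langle x_1,\dots,x_n\rangle$), which records that $\D_i$ lowers the polynomial degree by one; and the fact that $N$ restricts to a bijection of the augmentation ideal while $N+\id$ is a bijection of all of $\C\langle x_1,\dots,x_n\rangle$, since the eigenvalues of $N$ are $0,1,2,\dots$. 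Recall also that $\D_i=m\circ\sigma\circ\partial_i$, where $m$ is the multiplication and $\sigma\colon a\otimes b\mapsto b\otimes a$ is the flip appearing in $(\ref{it:cyclic_gradients-iii})$.

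For $(\ref{it:cyclic_gradients-i}) \Rightarrow (\ref{it:cyclic_gradients-ii})$ and $(\ref{it:cyclic_gradients-i}) \Rightarrow (\ref{it:cyclic_gradients-iii})$ I would prove two identities valid for \emph{every} $q\in\C\langle x_1,\dots,x_n\rangle$. First, on a monomial $q=x_{i_1}\cdots x_{i_k}$ one checks that $\sum_j x_j\D_jq$ and $\sum_j(\D_jq)x_j$ are both equal to the sum of the $k$ cyclic shifts of $q$ (with $x_j$ reinserted at the front, respectively the back, of the corresponding shifted word); hence $\sum_{j=1}^n[x_j,\D_jq]=0$, which is $(\ref{it:cyclic_gradients-ii})$. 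Second, $\partial_i\D_jq=\sigma(\partial_j\D_iq)$: expanding $\partial_i\bigl((m\circ\sigma)(\partial_jq)\bigr)$ with the Leibniz rule and collecting terms, both sides reduce to the same expression, bilinear in the twofold free difference quotients of $q$ and symmetric under simultaneously exchanging $i\leftrightarrow j$ and the two legs --- on monomials this is again a bookkeeping of cyclic words --- and this is $(\ref{it:cyclic_gradients-iii})$.

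Next, $(\ref{it:cyclic_gradients-iii}) \Rightarrow (\ref{it:cyclic_gradients-iv})$. Using the Leibniz rule and $\partial_i x_j=\delta_{ij}\,1\otimes 1$ one gets
\[
  \D_i\Bigl(\sum_{j=1}^n x_jp_j\Bigr)=p_i+\sum_{j=1}^n(m\circ\sigma)\bigl((x_j\otimes 1)\cdot\partial_ip_j\bigr).
\]
By $(\ref{it:cyclic_gradients-iii})$ each $\partial_ip_j$ may be replaced by $\sigma(\partial_jp_i)$, i.e.\ its two legs get interchanged; after this interchange the sum on the right equals $\sum_{j=1}^n\mu_j(\partial_jp_i)$, which by the Euler identity is $Np_i$. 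Hence $\D_i(\sum_j x_jp_j)=p_i+Np_i=(N+\id)p_i$, which is $(\ref{it:cyclic_gradients-iv})$. For $(\ref{it:cyclic_gradients-iv}) \Rightarrow (\ref{it:cyclic_gradients-i})$, note that $\sum_j x_jp_j$ lies in the augmentation ideal, so I may define $q:=N^{-1}\bigl(\sum_{j=1}^n x_jp_j\bigr)$, the unique solution of $Nq=\sum_j x_jp_j$. Then, by the commutation rule and $(\ref{it:cyclic_gradients-iv})$,
\[
  (N+\id)\,\D_iq=\D_i(Nq)=\D_i\Bigl(\sum_{j=1}^n x_jp_j\Bigr)=(N+\id)\,p_i ,
\]
and applying the inverse of $N+\id$ gives $\D_iq=p_i$ for all $i$; this proves $(\ref{it:cyclic_gradients-i})$, and it is literally the recipe of the final sentence of the theorem.

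The remaining implication $(\ref{it:cyclic_gradients-ii}) \Rightarrow (\ref{it:cyclic_gradients-i})$ --- the Poincar\'e lemma for cyclic gradients --- is where I expect the real difficulty, and where the polynomial identities above no longer suffice. Assuming $(\ref{it:cyclic_gradients-ii})$, I would again set $q:=N^{-1}(\sum_j x_jp_j)$; by the computation in the previous paragraph, $\D_iq=p_i$ is equivalent to $\sum_{j}(m\circ\sigma)\bigl((x_j\otimes 1)\cdot\partial_ip_j\bigr)=Np_i$, i.e.\ to deriving $(\ref{it:cyclic_gradients-iv})$ (equivalently $(\ref{it:cyclic_gradients-iii})$) from $(\ref{it:cyclic_gradients-ii})$ alone. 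Differentiating $\sum_j[x_j,p_j]=0$ only once and contracting the legs yields nothing but tautologies, so the argument has to use the structure of $\C\langle x_1,\dots,x_n\rangle$ as a multivariable generalized difference quotient ring: one interprets $(\ref{it:cyclic_gradients-ii})$ as the vanishing of the cyclic divergence of $p$ and the candidate $q$ as a primitive, with the number operator playing the role of a contracting homotopy, the verification resting on the coassociativity and derivation identities of the calculus. In the bare polynomial case this step can instead be settled by the combinatorics of cyclic words underlying Voiculescu's original proof of $(\ref{it:cyclic_gradients-i})\Leftrightarrow(\ref{it:cyclic_gradients-ii})$; isolating the abstract mechanism, so that it applies uniformly to the further examples treated in the paper, is the substantive part of the work.
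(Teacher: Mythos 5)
Your chain $(\ref{it:cyclic_gradients-i})\Rightarrow(\ref{it:cyclic_gradients-iii})\Rightarrow(\ref{it:cyclic_gradients-iv})\Rightarrow(\ref{it:cyclic_gradients-i})$, together with $(\ref{it:cyclic_gradients-i})\Rightarrow(\ref{it:cyclic_gradients-ii})$ and the derivation of the final ``recipe'' from $(\ref{it:cyclic_gradients-iv})\Rightarrow(\ref{it:cyclic_gradients-i})$, is correct and matches the mechanism the paper uses (the commutation rule $\D_i\circ N=(N+\id)\circ\D_i$, the Euler identity, and invertibility of $N+\id$). But the one implication you yourself identify as the substantive step, $(\ref{it:cyclic_gradients-ii})\Rightarrow(\ref{it:cyclic_gradients-i})$ (equivalently $(\ref{it:cyclic_gradients-ii})\Rightarrow(\ref{it:cyclic_gradients-iii})$), is not proved: you describe it as requiring either Voiculescu's original combinatorics of cyclic words or an unspecified ``contracting homotopy'' argument, and you leave it there. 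As it stands the equivalence class splits into $\{$(i),(iii),(iv)$\}$ plus a dangling condition (ii) that is only shown to be necessary, so the theorem is not established.

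Moreover, your diagnosis that ``differentiating $\sum_j[x_j,p_j]=0$ only once and contracting the legs yields nothing but tautologies'' is exactly where the proposal goes astray: a single differentiation is all the paper needs, provided one does not contract but instead invokes a uniqueness statement. Applying $\partial_i$ to $\sum_{j}[x_j,p_j]=0$ and using $\partial_i x_j=\delta_{ij}1\otimes1$ gives
\begin{equation*}
0=(1\otimes p_i-p_i\otimes 1)+\sum_{j=1}^n[x_j,\partial_i p_j],
\end{equation*}
and applying the flip $\sigma$ turns this into
\begin{equation*}
[p_i,1\otimes 1]=\sum_{j=1}^n\bigl(\sigma(\partial_i p_j)\bigr)\sharp[x_j,1\otimes 1].
\end{equation*}
The key point (the paper's Lemma \ref{lem:coefficients_unique}, resting on the universality of the representation $[p,1\otimes1]=\sum_j(\partial_j p)\sharp[x_j,1\otimes1]$) is that the coefficients in any identity $[p_i,1\otimes1]=\sum_j u_j\sharp[x_j,1\otimes1]$ are \emph{unique} and necessarily equal to $\partial_j p_i$; uniqueness follows by applying $\partial_k\otimes\id$ and then $\id\otimes\mu$, which annihilates the commutator terms. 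Comparing coefficients yields $\sigma(\partial_i p_j)=\partial_j p_i$, i.e.\ condition $(\ref{it:cyclic_gradients-iii})$, after which your own $(\ref{it:cyclic_gradients-iii})\Rightarrow(\ref{it:cyclic_gradients-iv})\Rightarrow(\ref{it:cyclic_gradients-i})$ closes the loop. Supplying this uniqueness lemma and the two-line computation above is what is missing from your argument; no monomial combinatorics is needed.
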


We emphasize that Theorem \ref{thm:characterization_cyclic_gradients} adds the ``integrability condition'' formulated in \eqref{it:cyclic_gradients-iii} to the list of equivalent conditions characterizing cyclic gradients as given in \cite[Theorem 1]{Voi00a}; it can be seen as a counterpart of the classical Schwarz integrability condition. Further, we note that $\sigma$ stands for the \emph{flip mapping} on $\C\langle x_1,\dots,x_n\rangle \otimes \C\langle x_1,\dots,x_n\rangle$; see Section \ref{subsec:bimodules}.

\begin{remark}
The integrability condition \eqref{it:cyclic_gradients-iii} is of particular interest in view of some fundamental and still open question about conjugate variables which was formulated by Voiculescu in \cite{Voi00a} and which goes back at least to \cite{CDG01}. Let $X_1,\dots,X_n$ be selfadjoint noncommutative random variables in some tracial $W^\ast$-probability space $(\M,\tau)$ with finite non-microstates free Fisher information and let $\xi = (\xi_1,\dots,\xi_n) \in L^2(W^\ast(X),\tau)^n$ be the system of conjugate variables for $X = (X_1,\dots,X_n)$; see \cite{Voi98} for the precise definitions. The question is whether the tuple $\xi$ lies in the closure of the linear space $\{(\D p)(X) \mid p\in \C\langle x_1,\dots,x_n\rangle\}$ with respect to the norm of the Hilbert space $L^2(W^\ast(X),\tau)^n$.

As it was shown in \cite[Corollary 5.12]{Voi99} that $\sum^n_{j=1} [X_j, \xi_j] = 0$, Voiculescu's characterization of cyclic gradients \cite[Theorem 1]{Voi00a} answers this question to the affirmative in the particular case when the conjugate variables belong to the unital algebra generated by $X$.

In order to handle the general case, Voiculescu's result needs to be extended to (a sufficiently large subspace of) $L^2(W^\ast(X),\tau)$. There is some hope that our variable-free approach will be helpful in that respect, because it is based only on identities for noncommutative differential operators which are likely to carry over to correctly chosen closures.
For that purpose, as our Theorem \ref{thm:characterization_cyclic_gradients_GDQ} and its proof show, the integrability condition in Theorem \ref{thm:characterization_cyclic_gradients} \eqref{it:cyclic_gradients-iii} seems to more accessible than the condition formulated in Theorem \ref{thm:characterization_cyclic_gradients} \eqref{it:cyclic_gradients-ii}.
Remarkably, it was shown in \cite[Lemma 36]{Dab14} that $\overline{\partial}_i \xi_j = \sigma( \overline{\partial}_j \xi_i)$ indeed holds for $i,j=1,\dots,n$ whenever the conjugate variables belong to the domain of $\overline{\partial}$, which is the closure of the unbounded linear operator
$$\partial:\ L^2(W^\ast(X)) \supseteq \C\langle X\rangle \to (L^2(W^\ast(X),\tau) \otimes L^2(W^\ast(X),\tau))^n;$$
note that $\sigma$ extends uniquely to an isometric automorphism of the Hilbert space $L^2(W^\ast(X),\tau) \otimes L^2(W^\ast(X),\tau)$.
In view of this fact, it is tempting to guess that $L^2$-approximation by cyclic gradients is possible at least for such ``differentiable'' conjugate variables. We leave this for future investigation.

In any case, in comparison with \cite[Corollary 5.12]{Voi99} and \cite[Lemma 36]{Dab14}, \cite[Theorem 1]{Voi00a} and Theorem \ref{thm:characterization_cyclic_gradients} show that the similarity between systems of conjugate variables and cyclic gradients goes quite deep, which gives some evidence to the conjecture.
\end{remark}

Further, we note that \cite[Theorem 1]{Voi00a} provides another equivalent condition which is not listed above; it demands $\sum^n_{j=1} x_j p_j$ to belong to the range of the so-called \emph{cyclic symmetrization operator} $C: \C\langle x_1,\dots,x_n\rangle \to \C\langle x_1,\dots,x_n\rangle$; see Section \ref{subsec:cyclic_symmetrization_operator}. This property, however, cannot be recovered by our approach yet. Nonetheless, it follows from Lemma \ref{lem:characterization_cyclic_gradients_GDQ} that if $p=(p_1,\dots,p_n)$ satisfies the equivalent conditions of Theorem \ref{thm:characterization_cyclic_gradients}, then every $q\in \C\langle x_1,\dots,x_n\rangle$ solving the equation $C q = \sum^n_{j=1} x_j p_j$ satisfies $\D q = p$.

Our second goal is the following counterpart of Theorem \ref{thm:characterization_cyclic_gradients} which characterizes free gradients among all \emph{noncommutative bi-polynomials}, i.e., elements in the tensor product $\C\langle x_1,\dots,x_n\rangle \otimes \C\langle x_1,\dots,x_n\rangle$. We point out that, while both theorems show a very nice similarity, none of them is an immediate consequence of the respective other.

\begin{theorem}\label{thm:characterization_free_gradients}
Let $u=(u_1,\dots,u_n) \in (\C\langle x_1,\dots,x_n\rangle \otimes \C\langle x_1,\dots,x_n\rangle)^n$ be an $n$-tuple of noncommutative bi-polynomials. Then the following statements are equivalent:
\begin{enumerate}
 \item\label{it:free_gradients-i} $u$ is a free gradient, i.e., there exists $q\in \C\langle x_1,\dots,x_n\rangle$ such that $$u_i = \partial_i q \qquad\text{for $i=1,\dots,n$}.$$
 \item\label{it:free_gradients-ii} There exists $q\in \C\langle x_1,\dots,x_n\rangle$ such that $$\sum^n_{j=1} u_j \sharp [x_j, 1\otimes 1] = [q, 1\otimes 1],$$ where 
$[q, 1 \otimes 1] = q \otimes 1 - 1 \otimes q$ and $$(p_1 \otimes p_2) \sharp [x_j, 1 \otimes 1] = p_1 x_j \otimes p_2 - p_1 \otimes x_j p_2.$$ 
 \item\label{it:free_gradients-iii} For $i,j=1,\dots,n$, we have that $$(\id \otimes \partial_i)(u_j) = (\partial_j \otimes \id)(u_i).$$
 \item\label{it:free_gradients-iv} For $i=1,\dots,n$, it holds true that $$\partial_i \Big( \sum^n_{j=1} u_j \sharp x_j \Big) = (N \otimes \id + \id \otimes N + \id \otimes \id)(u_i).$$
\end{enumerate}
In the case that the equivalent conditions are satisfied, the noncommutative polynomials $q$ in \eqref{it:free_gradients-i} and \eqref{it:free_gradients-ii} differ only by an additive constant, so that every $q$ which satisfies one of these conditions satisfies also the other; such a $q\in\C\langle x_1,\dots,x_n\rangle$ can be found by solving the equation $N q = \sum^n_{j=1} u_j \sharp x_j$.
\end{theorem}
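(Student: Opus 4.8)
The strategy is to deduce the theorem from a short list of identities for the noncommutative derivatives $\partial_1,\dots,\partial_n$ and the number operator $N$ on $\C\langle x_1,\dots,x_n\rangle$, and then to exploit the invertibility of certain weighted-degree operators. Concretely, I would first record (or import from the preceding sections) the following facts, each checked on monomials: the \emph{Euler formula} $\sum_{j=1}^n (\partial_j q)\sharp x_j = Nq$; the \emph{Leibniz identity} $\sum_{j=1}^n (\partial_j q)\sharp[x_j,1\otimes1] = [q,1\otimes1]$, which says precisely that $\partial_1,\dots,\partial_n$ are the coordinates of the universal derivation $q\mapsto[q,1\otimes1]$ valued in $\Omega^1 := \ker\big(m\colon \C\langle x\rangle\otimes\C\langle x\rangle\to\C\langle x\rangle\big)$; the \emph{coassociativity} (or Schwarz) identity $(\id\otimes\partial_i)\circ\partial_j = (\partial_j\otimes\id)\circ\partial_i$; and the \emph{commutation relation} $\partial_i\circ N = (N\otimes\id + \id\otimes N + \id\otimes\id)\circ\partial_i$, valid because $\partial_i$ drops the polynomial degree by exactly one. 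In addition I would use that $N\otimes\id + \id\otimes N + \id\otimes\id$ is invertible on $\C\langle x\rangle\otimes\C\langle x\rangle$ (it is multiplication by $d_1+d_2+1\ge 1$ on the bidegree-$(d_1,d_2)$ component), and that $u\mapsto\sum_{j=1}^n u_j\sharp[x_j,1\otimes1]$ is an injective map $(\C\langle x\rangle\otimes\C\langle x\rangle)^n\to\Omega^1$ — in fact a linear isomorphism, since $\Omega^1$ is free over $\C\langle x\rangle$ on $dx_1,\dots,dx_n$ (a standard fact for free algebras, checkable directly), so the coordinates $u_1,\dots,u_n$ of a noncommutative $1$-form are uniquely determined.

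Granting all this, the implications out of \eqref{it:free_gradients-i} are immediate. If $u_i=\partial_i q$, then the Leibniz identity gives $\sum_j u_j\sharp[x_j,1\otimes1]=[q,1\otimes1]$, which is \eqref{it:free_gradients-ii}; the coassociativity identity gives $(\id\otimes\partial_i)(u_j)=(\id\otimes\partial_i)(\partial_j q)=(\partial_j\otimes\id)(\partial_i q)=(\partial_j\otimes\id)(u_i)$, which is \eqref{it:free_gradients-iii}; and the Euler formula together with the commutation relation gives $\partial_i\big(\sum_j u_j\sharp x_j\big)=\partial_i(Nq)=(N\otimes\id+\id\otimes N+\id\otimes\id)(\partial_i q)=(N\otimes\id+\id\otimes N+\id\otimes\id)(u_i)$, which is \eqref{it:free_gradients-iv}. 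For \eqref{it:free_gradients-ii}$\Rightarrow$\eqref{it:free_gradients-i}, subtract the Leibniz identity for $q$ from the equation in \eqref{it:free_gradients-ii} to get $\sum_j(u_j-\partial_j q)\sharp[x_j,1\otimes1]=0$, and conclude $u_j=\partial_j q$ from the injectivity above; so the very same $q$ works. For \eqref{it:free_gradients-iv}$\Rightarrow$\eqref{it:free_gradients-i}, note that $\sum_j u_j\sharp x_j$ has vanishing constant term, hence lies in the range of $N$, so $q:=N^{-1}\big(\sum_j u_j\sharp x_j\big)$ is a well-defined noncommutative polynomial; by the commutation relation and \eqref{it:free_gradients-iv} we get $(N\otimes\id+\id\otimes N+\id\otimes\id)(\partial_i q)=\partial_i(Nq)=\partial_i\big(\sum_j u_j\sharp x_j\big)=(N\otimes\id+\id\otimes N+\id\otimes\id)(u_i)$, and cancelling the invertible operator yields $\partial_i q=u_i$. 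This also establishes the concluding assertion about solving $Nq=\sum_j u_j\sharp x_j$; uniqueness of $q$ up to an additive constant follows from $\bigcap_i\ker\partial_i=\C$ (again a consequence of the Euler formula), and we have just seen that a $q$ satisfying \eqref{it:free_gradients-ii} satisfies \eqref{it:free_gradients-i} with the same $q$, and conversely.

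It remains to close the cycle via \eqref{it:free_gradients-iii}$\Rightarrow$\eqref{it:free_gradients-iv}, which I expect to be the main computational obstacle. The plan is to expand $\partial_i(u_j\sharp x_j)$ with the Leibniz rule for the derivation $\partial_i$ acting on $\C\langle x\rangle\otimes\C\langle x\rangle$-valued expressions: writing $u_j=a\otimes b$, one finds $\partial_i(u_j\sharp x_j)=(\partial_i a)\cdot x_j b + \delta_{ij}\,(a\otimes b) + a x_j\cdot(\partial_i b)$, so that $\partial_i(u_j\sharp x_j)$ is the sum of a term obtained from $(\partial_i\otimes\id)(u_j)\in\C\langle x\rangle^{\otimes 3}$ by multiplying its last two legs around a copy of $x_j$, the diagonal term $\delta_{ij}\,u_j$, and a term obtained from $(\id\otimes\partial_i)(u_j)$ by multiplying its first two legs around a copy of $x_j$. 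Summing over $j$ and substituting \eqref{it:free_gradients-iii} in its two forms $(\id\otimes\partial_i)(u_j)=(\partial_j\otimes\id)(u_i)$ and $(\partial_i\otimes\id)(u_j)=(\id\otimes\partial_j)(u_i)$, the resulting $x_j$-summations collapse — by applying the Euler formula inside a single tensor leg — to $(N\otimes\id)(u_i)+(\id\otimes N)(u_i)$, while the diagonal terms sum to $u_i$; adding up gives exactly $(N\otimes\id+\id\otimes N+\id\otimes\id)(u_i)$, which is \eqref{it:free_gradients-iv}. The only real difficulty is bookkeeping: one must keep the several leg-wise multiplication operations and their interactions with $\partial_i$ carefully organized. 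This is most transparently done in the divergence-operator formalism set up earlier, and the same manipulations will drive the analogous steps in the generalized-difference-quotient-ring version of the theorem.
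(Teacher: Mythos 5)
Your proposal is correct and follows essentially the same route as the paper: the paper derives the equivalence of (i), (iii), (iv) from its abstract GDQ-ring result (Theorem \ref{thm:characterization_free_gradients_GDQ}, whose ingredients are exactly your Euler formula realized as the divergence $\partial_j^\star(u)=u\sharp x_j$, the commutation relation $\partial_i\circ N=N_2\circ\partial_i$, the invertibility of $N_2$, and the computation you carry out for (iii) $\Rightarrow$ (iv), which is Lemma \ref{lem:irrotational_properties}\eqref{it:irrotational_properties-iii}), and handles (i) $\Leftrightarrow$ (ii) via the uniqueness of the coefficients in $[q,1\otimes 1]=\sum_j u_j\sharp[x_j,1\otimes 1]$ (Lemma \ref{lem:coefficients_unique}), which is the injectivity you invoke. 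The only cosmetic difference is that you work concretely on monomials where the paper factors everything through the divergence formalism, a point you yourself note at the end.
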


We note that both results remain true when the noncommutative polynomials $\C\langle x_1,\dots,x_n\rangle$ are replaced by the algebra $\C\langle x_1,\dots,x_n; R\rangle$ of all formal power series with prescribed radius of convergence $R>0$. The algebraic tensor product $\otimes$ must then be replaced by the projective tensor product $\hotimes$.

\begin{example}
We consider the noncommutative polynomial $q \in \C\langle x_1,x_2 \rangle$ which is given by $q = x_1 x_2^2 x_1$. We have then
\begin{align*}
u_1 := \partial_1 q &= 1 \otimes x_2^2 x_1 + x_1 x_2^2 \otimes 1,\\
u_2 := \partial_2 q &= x_1 \otimes x_2 x_1 + x_1 x_2 \otimes x_1,
\end{align*}
and hence
\begin{align*}
p_1 := \D_1 q &= x_2^2 x_1 + x_1 x_2^2,\\
p_2 := \D_2 q &= x_2 x_1^2 + x_1^2 x_2.
\end{align*}

1) We want to use Theorem \ref{thm:characterization_cyclic_gradients} to ``recover'' $q$ from the tuple $p=(p_1,p_2)$. In order to do so, we first verify that $p$ satisfies the integrability condition formulated in Item \eqref{it:cyclic_gradients-iii}; since
\begin{align*}
\partial_1 p_1 &= x_2^2 \otimes 1 + 1 \otimes x_2^2,\\
\partial_2 p_1 &= 1 \otimes x_2 x_1 + x_2 \otimes x_1 + x_1 \otimes x_2 + x_1 x_2 \otimes 1,\\
\partial_1 p_2 &= x_2 \otimes x_1 + x_2 x_1 \otimes 1 + 1 \otimes x_1 x_2 + x_1 \otimes x_2,\\
\partial_2 p_2 &= 1 \otimes x_1^2 + x_1^2 \otimes 1,
\end{align*}
we see that indeed $\partial_1 p_2 = \sigma(\partial_2 p_1)$, while $\partial_1 p_1$ and $\partial_2 p_2$ are invariant under $\sigma$. Next, we compute
$$x_1 p_1 + x_2 p_2 = x_1 x_2^2 x_1 + x_1^2 x_2^2 + x_2^2 x_1^2 + x_2 x_1^2 x_2$$
and we chose a noncommutative polynomial $\tilde{q}$ such that $N \tilde{q} = x_1 p_1 + x_2 p_2$, say
$$\tilde{q} = \frac{1}{4}\big(x_1 x_2^2 x_1 + x_1^2 x_2^2 + x_2^2 x_1^2 + x_2 x_1^2 x_2\big).$$
Then $\D \tilde{q} = p$; we note that $\tilde{q}$ does not agree with $q$ but is rather a cyclically symmetrized version of $q$. Thus, it is more appropriate to solve the equation $x_1 p_1 + x_2 p_2 = C \tilde{q}$, for which we easily find the solution $\tilde{q} = q$. 

2) Analogously, we can apply Theorem \ref{thm:characterization_free_gradients} to reconstruct $q$ from the tuple $u=(u_1,u_2)$. First, we check that $u$ satisfies the condition formulated in Item \eqref{it:free_gradients-iii}; indeed,
\begin{IEEEeqnarray*}{lCCCr}
(\partial_1 \otimes \id)(u_1) &= & 1 \otimes x_2^2 \otimes 1 &= &(\id \otimes \partial_1)(u_1),\\
(\partial_2 \otimes \id)(u_1) &= & x_1 \otimes x_2 \otimes 1 + x_1 x_2 \otimes 1 \otimes 1 &= &(\id \otimes \partial_1)(u_2),\\
(\partial_1 \otimes \id)(u_2) &= & 1 \otimes 1 \otimes x_2 x_1 + 1 \otimes x_2 \otimes x_1 &= &(\id \otimes \partial_2)(u_1),\\
(\partial_2 \otimes \id)(u_2) &= & x_1 \otimes 1 \otimes x_1 &= &(\id \otimes \partial_2)(u_2).
\end{IEEEeqnarray*}
One easily confirms that $u_1 \sharp x_1 + u_2 \sharp x_2 = 4 q = N q$; we conclude, as expected, that $\partial q = u$.
\end{example}

The rest of this paper is organized as follows.

In Section \ref{sec:preliminaries}, we collect some preliminaries on bimodules, derivations, and tensor products.
Sections \ref{sec:DiffCalculus_GDQ} and \ref{sec:gradients} constitute the main part of this paper. In Section \ref{sec:DiffCalculus_GDQ}, we provide some background on Voiculescu's multivariable generalized difference quotient rings and we develop the theory of divergence operators, both for free and cyclic gradients. Building on the latter, we obtain in Section \ref{sec:gradients} the announced generalizations of Theorems \ref{thm:characterization_cyclic_gradients} and \ref{thm:characterization_free_gradients}.
In Section \ref{sec:gradients_NCPolys}, we apply these general results in the particular setting of noncommutative polynomials; in this way, we finally prove Theorems \ref{thm:characterization_cyclic_gradients} and \ref{thm:characterization_free_gradients}. In preparation of this, we explain in Section \ref{sec:NCPolys} how noncommutative polynomials fit into the general framework developed in Section \ref{sec:DiffCalculus_GDQ}.
In Section \ref{sec:embeddings}, we discuss the phenomenon that a divergence on a multivariable generalized difference quotient ring induces an embedding of the ring of noncommutative polynomials; in particular, we provide some further examples for situations in which our approach can be applied.
The last section, Section \ref{sec:vanishing_gradients}, is devoted to the study of kernels of free and cyclic gradients.

\section{PRELIMINARIES}\label{sec:preliminaries}

\subsection{Derivations}\label{subsec:derivations}

Let $A$ be a complex algebra and let $M$ be an $A$-bimodule. A \emph{derivation} $d: A \to M$ means a linear map which further satisfies the \emph{Leibniz rule} $d(a_1 a_2) = d(a_1) \cdot a_2 + a_1 \cdot d(a_2)$ for all $a_1,a_2\in A$, where $\cdot$ stands for the bimodule action of $A$ on $M$.

Let $\mu: A \otimes A \to A$ be the \emph{multiplication map} which is the linear map defined by linear extension of $\mu(a_1 \otimes a_2) := a_1 a_2$. Further, we let 
$\mu_\la: A \otimes M \to M$ and $\mu_\ra: M \otimes A \to A$ implement the left and right actions, respectively, of $A$ on $M$, i.e., $\mu_\la(a \otimes m) := a \cdot m$ and $\mu_\ra(m \otimes a) := m \cdot a$. Then, the Leibniz rule can be rephrased as an identity on $A \otimes A$ by
\begin{equation}\label{eq:derivation}
d \circ \mu = \mu_\ra \circ (d \otimes \id_A) + \mu_\la \circ (\id_A \otimes d).
\end{equation}

\subsection{Tensor products and bimodules}\label{subsec:bimodules}

Let $A$ be a complex algebra and let $M$ be an $A$-bimodule. We introduce $\sharp: (A \otimes A) \times M \to M$ as a bilinear map which is determined by $(a_1 \otimes a_2) \sharp m := a_1 \cdot m \cdot a_2$ and we define for $a\in A$ and $m\in M$ the commutator $[a,m] := a \cdot m - m \cdot a$.

A particularly important instance is $A\otimes A$, namely the algebraic tensor product over $\C$ of $A$ with itself, which forms an $A$-bimodule with the canonical left and right action $\cdot$ which is determined by $b_1 \cdot (a_1 \otimes a_2) \cdot b_2 = (b_1 a_1) \otimes (a_2 b_2)$; note that accordingly $\mu_\la = \mu \otimes \id_A$ and $\mu_\ra = \id_A \otimes \mu$. In this case, we have that $(b_1 \otimes b_2) \sharp (a_1 \otimes a_2) = (b_1 a_1) \otimes (a_2 b_2)$ and $[b,a_1\otimes a_2] = (b a_1) \otimes a_2 - a_1 \otimes (a_2 b)$.
We note that correspondingly $(A \otimes A)^n$ becomes an $A$-bimodule with respect to the entry-wise action defined by $a_1 \cdot (u_1,\dots,u_n) \cdot a_2 := (a_1 \cdot u_1 \cdot a_2, \dots, a_1 \cdot u_n \cdot a_2)$.

For a general $A$-bimodule $M$, we further define
$$\sharp_1:\ (A \otimes A \otimes A) \times M \to M \otimes A,\quad (a_1 \otimes a_2 \otimes a_3) \sharp_1 m := (a_1 \cdot m \cdot a_2) \otimes a_3$$
and
$$\sharp_2:\ (A \otimes A \otimes A) \times M \to A \otimes M,\quad (a_1 \otimes a_2 \otimes a_3) \sharp_2 m := a_1 \otimes (a_2 \cdot m \cdot a_3).$$

Moreover, for any $n\in\N$, we consider the canonical left action $\pi: S_n \times A^{\otimes n} \to A^{\otimes n}$ of the symmetric group $S_n$ on the $n$-fold tensor product $A^{\otimes n}$ of $A$ with itself. More precisely, to every permutation $\sigma \in S_n$ we associate the linear map $\pi_\sigma: A^{\otimes n} \to A^{\otimes n}$ which is induced by $\pi_\sigma(a_1 \otimes \dots \otimes a_n) = a_{\sigma^{-1}(1)} \otimes \dots \otimes a_{\sigma^{-1}(n)}$. Note that indeed $\pi_{\sigma_1 \circ \sigma_2} = \pi_{\sigma_1} \circ \pi_{\sigma_2}$ for every $\sigma_1,\sigma_2 \in S_n$.
Further, we point out that $\pi_{(12)}: A \otimes A \to A \otimes A$ is nothing but the well-known \emph{flip mapping} on $A\otimes A$, i.e., $\pi_{(12)}(a_1 \otimes a_2) = a_2 \otimes a_1$; following the usual convention, we will denote $\pi_{(12)}$ by $\sigma$.

\section{DIFFERENTIAL CALCULUS IN MULTIVARIABLE GDQ RINGS}\label{sec:DiffCalculus_GDQ}

In his fundamental paper \cite{Voi00b} (see also \cite{Voi04}), Voiculescu introduced the so-called generalized difference quotient rings as a general frame to deal with algebras that allow a differential calculus analogous to the noncommutative derivatives on the algebra of noncommutative polynomials. We will use this language to uncover the mechanism behind Theorems \ref{thm:characterization_cyclic_gradients} and \ref{thm:characterization_free_gradients} and to provide proofs in a variable-free manner.

\subsection{Voiculescu's multivariable GDQ rings}

In this subsection, we recall for the reader's convenience the basic definitions of \cite{Voi00b} in the refined version of \cite{Voi04}. Note that we start right from the beginning with the multivariable case.

\begin{definition}\label{def:GDQring}
A \emph{multivariable generalized difference quotient ring} (a \emph{multivariable GDQ ring}, for short) is a triple $(A,\mu,\partial)$ consisting of a complex algebra $A$, the induced multiplication mapping $\mu: A\otimes A \to A$, and an $n$-tuple $\partial=(\partial_1,\dots,\partial_n)$ of linear mappings
$$\partial_i: A \to A \otimes A, \qquad i=1,\dots,n,$$
that enjoy the following properties:
\begin{enumerate}
 \item The mappings $\partial_1,\dots,\partial_n$ satisfy the \emph{joint coassociativity relation}
 \begin{equation}\label{eq:GDQring_coass}
 (\partial_i \otimes \id_A) \circ \partial_j = (\id_A \otimes \partial_j) \circ \partial_i \qquad\text{for $i,j=1,\dots,n$}.
 \end{equation}
 \item\label{it:GDQring_deriv} Each of the mappings $\partial_1,\dots,\partial_n$ is a derivation on $(A,\mu)$, i.e., we have that
 \begin{equation}\label{eq:GDQring_deriv}
 \partial_i \circ \mu = (\mu \otimes \id_A)\circ(\id_A \otimes \partial_i) + (\id_A \otimes \mu) \circ (\partial_i \otimes \id_A) \qquad\text{for $i=1,\dots,n$}.
 \end{equation}
\end{enumerate}
We refer to $\partial$ as the \emph{gradient of $A$}, which we may view as a linear map $\partial: A \to (A \otimes A)^n$.
If $A$ has a unit element $1_A$, we call the multivariable GDQ ring $(A,\mu,\partial)$ \emph{unital}.
\end{definition}

We point out that Item \eqref{it:GDQring_deriv} just rephrases the \emph{Leibniz rule}
\begin{equation}\label{eq:GDQring_Leibniz}
\partial_i(a_1 a_2) = (\partial_i a_1) \cdot a_2 + a_1 \cdot (\partial_i a_2) \qquad\text{for all $a_1,a_2\in A$}
\end{equation}
in the sense of \eqref{eq:derivation} with respect to the canonical $A$-bimodule structure on $A \otimes A$; see Section \ref{subsec:bimodules}. In fact, if $(A\otimes A)^n$ is viewed as an $A$-bimodule, then also the gradient $\partial: A \to (A \otimes A)^n$ becomes a derivation.

The Leibniz rule \eqref{eq:GDQring_Leibniz} enforces that in the unital case $\partial_i 1_A = 0$ for $i=1,\dots,n$. For later use, we point out that \eqref{eq:GDQring_Leibniz} further implies that, for each $a\in A$ and each $u\in A \otimes A$,
\begin{equation}\label{eq:GDQring_Leibniz-sharp}
\partial_i (u\sharp a) = (\id \otimes \partial_i)(u) \sharp_1 a + (\partial_i \otimes \id)(u) \sharp_2 a + u \sharp (\partial_i a).
\end{equation}

Another notion from \cite{Voi00b,Voi04} which will play an important role in our considerations is that of a grading on a multivariable GDQ ring.

\begin{definition}\label{def:gradedGDQ}
A GDQ ring $(A,\mu,\partial)$ is called \emph{graded}, if there exits a linear mapping $L: A \to A$, the so-called \emph{grading}, that satisfies the following properties:
\begin{enumerate}
 \item\label{it:numop_deriv} The associated \emph{number operator} $N := L-\id_A$ is an $A$-valued derivation on $A$ with respect to $\mu$, i.e.,
 \begin{equation}\label{eq:numop_deriv}
 N \circ \mu = \mu \circ (N \otimes \id_A + \id_A \otimes N).
 \end{equation}
 \item\label{it:numop_coderiv} $L$ is a coderivation with respect to each $\partial_i$, i.e., we have that
 \begin{equation}\label{eq:numop_coderiv-L}
 \partial_i \circ L = (L \otimes \id_A + \id_A \otimes L) \circ \partial_i \qquad\text{for all $i=1,\dots,n$},
 \end{equation}
 or equivalently, in terms of the number operator $N$,
 \begin{equation}\label{eq:numop_coderiv-N}
 \partial_i \circ N = (N \otimes \id_A + \id_A \otimes N + \id_{A \otimes A}) \circ \partial_i \qquad\text{for all $i=1,\dots,n$}.
 \end{equation}
\end{enumerate}
We say that $(A,\mu,\partial)$ is \emph{weakly graded}, if a linear mapping $L: A \to A$ exists which satisfies \eqref{it:numop_coderiv} but not necessarily \eqref{it:numop_deriv}; we call $L$ then a \emph{weak grading} of $(A,\mu,\partial)$.
\end{definition}

\subsection{The cyclic derivatives on multivariable GDQ rings}

The aim of this subsection is to transfer the notion of cyclic derivatives to the framework of multivariable GDQ rings.

\begin{definition}\label{def:GDQring_cyclic_derivatives}
Let $(A,\mu,\partial)$ be a multivariable GDQ ring. The linear maps $\D_j: A \to A$ which are defined by $\D_i := \mu \circ \sigma \circ \partial_i$ for $i=1,\dots,n$ are called the \emph{cyclic derivatives associated to $\partial$}. We call $\D = (\D_1, \dots, \D_n)$ the \emph{cyclic gradient}, which may be viewed as a linear map $\D: A \to A^n$.
\end{definition}

For a multivariable GDQ ring $(A,\mu,\partial)$, one can also study the maps $\mu \circ \partial_i: A \to A$ for $i=1,\dots,n$, which differ from the cyclic derivatives $\D_i = \mu \circ \sigma \circ \partial_i$ only by the flip map $\sigma$; we do not go into details here but we refer the interested reader to \cite{A00}.

While each $\partial_i$ for a multivariable GDQ ring $(A,\mu,\partial)$ satisfies by Definition \ref{def:GDQring} the Leibniz rule in the form of \eqref{eq:GDQring_Leibniz}, the corresponding rule for the cyclic derivatives is slightly more delicate. In fact, we have that
\begin{equation}\label{eq:GDQring_Leibniz-cyclic}
\D_i(a_1 a_2) = (\tpartial_i a_1) \sharp a_2 + (\tpartial_i a_2) \sharp a_1 \qquad\text{for all $a_1,a_2\in A$},
\end{equation}
where we put $\tpartial_i := \sigma \circ \partial_i$ for $i=1,\dots,n$; note that clearly $\D_i = \mu \circ \tpartial_i$. Indeed, \eqref{eq:GDQring_Leibniz-cyclic} follows from \eqref{eq:GDQring_Leibniz} by using that $(\mu \circ \sigma)(u \cdot a) = \sigma(u) \sharp a$ and $(\mu \circ \sigma)(a \cdot u) = \sigma(u) \sharp a$ for all $u\in A \otimes A$ and $a\in A$.

Cyclic derivatives inherit also the joint coassociativity relation \eqref{eq:GDQring_coass} satisfied by the derivatives on the underlying multivariable GDQ ring; this is the content of the next lemma.

\begin{lemma}\label{lem:GDQring_coass_cyclic}
Let $(A,\mu,\partial)$ be a multivariable GDQ ring and consider the cyclic derivatives $\D=(\D_1,\dots,\D_n)$ associated to $\partial$. Then
$$\sigma \circ \partial_i \circ \D_j = \partial_j \circ \D_i \qquad\text{for $i,j=1,\dots,n$}.$$
\end{lemma}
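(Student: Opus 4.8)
The statement to prove is that $\sigma \circ \partial_i \circ \D_j = \partial_j \circ \D_i$ for all $i,j$. The plan is to express everything in terms of the maps $\partial_i: A \to A\otimes A$ and then reduce the identity to the joint coassociativity relation \eqref{eq:GDQring_coass}. Recall $\D_j = \mu \circ \sigma \circ \partial_j$. The first step is to understand how $\partial_i$ interacts with $\mu$ via the Leibniz rule \eqref{eq:GDQring_deriv}, and how both interact with the flip $\sigma = \pi_{(12)}$ on the relevant tensor factors. The key observation is that $\partial_i \circ \mu$ maps $A\otimes A$ into $A\otimes A$, and by \eqref{eq:GDQring_deriv} it splits into a ``left'' term $(\mu\otimes\id)\circ(\id\otimes\partial_i)$ and a ``right'' term $(\id\otimes\mu)\circ(\partial_i\otimes\id)$. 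Applying this to $u = (\sigma\circ\partial_j)(a) \in A\otimes A$ will produce four-tensor expressions after one more application of a $\partial$.

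The core computation I would carry out is the following. Write $\partial_j a = \sum a' \otimes a''$ in Sweedler-type notation, so $\sigma\partial_j a = \sum a'' \otimes a'$ and $\D_j a = \sum a'' a'$. Then applying $\partial_i$ and using the Leibniz rule,
\begin{equation*}
\partial_i \D_j a = \sum (\partial_i a'') \cdot a' + \sum a'' \cdot (\partial_i a').
\end{equation*}
Now I would rewrite each summand so that the inner $\partial$ is applied directly to $a$: the term $\sum (\partial_i a'') \cdot a'$ comes from $(\id\otimes\mu)$ applied to a suitable reshuffle of $(\partial_i\otimes\id)(\partial_j a) \in A\otimes A\otimes A$, and the term $\sum a''\cdot(\partial_i a')$ comes from $(\mu\otimes\id)$ applied to a reshuffle of $(\id\otimes\partial_i)(\partial_j a)$. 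Concretely, $(\partial_i \otimes \id_A)(\partial_j a) = \sum (\partial_i a') \otimes a''$ and $(\id_A \otimes \partial_i)(\partial_j a) = \sum a' \otimes \partial_i a''$; one then needs to apply $\mu$ on an appropriate pair of legs and track where $a'$ and $a''$ land. The whole identity $\sigma\circ\partial_i\circ\D_j = \partial_j\circ\D_i$ then becomes, after expanding both sides, a pair of identities between such three-leg expressions, and the content is exactly \eqref{eq:GDQring_coass} in the form $(\partial_i\otimes\id)\partial_j = (\id\otimes\partial_j)\partial_i$ (together with its companion obtained by also using $(\partial_j\otimes\id)\partial_i = (\id\otimes\partial_i)\partial_j$), combined with bookkeeping about how $\sigma$ permutes the legs before $\mu$ closes two of them together.

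The cleanest way to organize this without drowning in Sweedler indices is to state a small auxiliary identity first: for any $u \in A\otimes A$ and any $i$,
\begin{equation*}
\sigma\big(\partial_i(\mu(\sigma u))\big) = \mu_{13}\big((\partial_i\otimes\id)u\big) + \mu_{13}\big((\id\otimes\partial_i)u\big)^{\text{reshuffled}},
\end{equation*}
where $\mu_{13}$ denotes multiplying the first and third legs of a three-fold tensor — I would make this precise with explicit permutation operators $\pi_\sigma$ from Section~\ref{subsec:bimodules}. Applying this with $u = \partial_j a$ on the left-hand side and with $u = \partial_i a$ (and the roles of $i,j$ swapped) on the right-hand side, the two resulting expressions match term-by-term precisely because of \eqref{eq:GDQring_coass}. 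I expect the main obstacle to be purely notational: keeping track of which of the three tensor legs gets multiplied and in which order, since $\sigma$ is applied both inside (as part of $\D_j$) and outside, and these two flips act on different legs. Using the permutation-operator calculus $\pi_{\sigma_1\circ\sigma_2} = \pi_{\sigma_1}\circ\pi_{\sigma_2}$ on $A^{\otimes 3}$ to reduce the leg-shuffling to a single computation in $S_3$ should tame this; once the permutations are identified, the algebraic content is a one-line invocation of joint coassociativity.
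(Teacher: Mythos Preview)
Your plan is correct and matches the paper's approach essentially line for line: expand $\D_j = \mu\circ\sigma\circ\partial_j$, apply the Leibniz rule \eqref{eq:GDQring_deriv} to $\partial_i\circ\mu$, use the $S_3$-permutation calculus on $A^{\otimes 3}$ to commute the flips past $\partial_i$ and $\mu$, and invoke joint coassociativity \eqref{eq:GDQring_coass} to match the two resulting terms with those of $\partial_j\circ\D_i$. The paper makes your ``auxiliary identity'' precise via the four relations $(\partial_i\otimes\id)\circ\sigma = \pi_{(132)}\circ(\id\otimes\partial_i)$, $(\id\otimes\partial_i)\circ\sigma = \pi_{(123)}\circ(\partial_i\otimes\id)$, $\sigma\circ(\id\otimes\mu) = (\mu\otimes\id)\circ\pi_{(132)}$, and $\sigma\circ(\mu\otimes\id) = (\id\otimes\mu)\circ\pi_{(123)}$, which is exactly the leg-shuffling bookkeeping you anticipate.
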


\begin{proof}
To begin with, we note that for $i=1,\dots,n$
$$(\partial_i \otimes \id_A) \circ \sigma = \pi_{(132)} \circ (\id_A \otimes \partial_i) \qquad\text{and}\qquad (\id \otimes \partial_i) \circ \sigma = \pi_{(123)} \circ (\partial_i \otimes \id_A).$$
Furthermore, one easily sees that
$$\sigma \circ (\id_A \otimes \mu) = (\mu \otimes \id_A) \circ \pi_{(132)} \qquad\text{and}\qquad \sigma \circ (\mu \otimes \id_A) = (\id_A \otimes \mu) \circ \pi_{(123)}.$$
Using these relations as well as the joint coassociativity relation \eqref{eq:GDQring_coass}, we may check for $i,j=1,\dots,n$ that
\begin{align*}
\sigma \circ (\mu \otimes \id_A) \circ (\id_A \otimes \partial_i) \circ \sigma \circ \partial_j
&= (\id_A \otimes \mu) \circ \pi_{(123)} \circ (\id_A \otimes \partial_i) \circ \sigma \circ \partial_j\\
&= (\id_A \otimes \mu) \circ \pi_{(123)} \circ \pi_{(123)} \circ (\partial_i \otimes \id_A) \circ \partial_j\\
&= (\id_A \otimes \mu) \circ \pi_{(132)} \circ (\id_A \otimes \partial_j) \circ \partial_i\\
&= (\id_A \otimes \mu) \circ (\partial_j \otimes \id_A) \circ \sigma \circ \partial_i
\end{align*}
and analogously
$$\sigma \circ (\id_A \otimes \mu) \circ (\partial_i \otimes \id_A) \circ \sigma \circ \partial_j = (\mu \otimes \id_A) \circ (\id \otimes \partial_j) \circ \sigma \circ \partial_i.$$
Combining the latter identities with \eqref{eq:GDQring_deriv} yields
\begin{align*}
\sigma \circ \partial_i \circ \D_j &= \sigma \circ (\partial_i \circ \mu) \circ \sigma \circ \partial_j\\
                                   &= \sigma \circ \big( (\mu \otimes \id_A) \circ (\id_A \otimes \partial_i) + (\id_A \otimes \mu) \circ (\partial_i \otimes \id_A)\big) \circ \sigma \circ \partial_j\\
                                   &= \big((\id_A \otimes \mu) \circ (\partial_j \otimes \id_A) + (\mu \otimes \id_A) \circ (\id \otimes \partial_j) \big) \circ \sigma \circ \partial_i\\
                                   &= (\partial_j \circ \mu) \circ \sigma \circ \partial_i\\
                                   &= \partial_j \circ \D_i,        
\end{align*}
which is the asserted identity.
\end{proof}

Next, we address the question how the cyclic derivatives interact with the grading in the case of a graded multivariable GDQ ring. The following lemma shows that an easy commutation relation holds.

\begin{lemma}\label{lem:number_operator_cyclic_derivative}
Let $(A,\mu,\partial)$ be a graded multivariable GDQ ring with the grading $L$ and the associated number operator $N$. Then, the cyclic derivatives $\D_1,\dots,\D_n$ associated to $\partial$ satisfy the commutation relation
\begin{equation}\label{eq:number_operator_cyclic_derivative}
\D_i \circ N = L \circ \D_i \qquad\text{for $i=1,\dots,n$.}
\end{equation}
\end{lemma}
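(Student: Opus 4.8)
The plan is to derive the commutation relation directly from the definition $\D_i = \mu \circ \sigma \circ \partial_i$, using the coderivation property \eqref{eq:numop_coderiv-N} of the grading together with the fact that $\mu$ intertwines $N$ appropriately. First I would write $\D_i \circ N = \mu \circ \sigma \circ \partial_i \circ N$ and apply \eqref{eq:numop_coderiv-N} to replace $\partial_i \circ N$ by $(N \otimes \id_A + \id_A \otimes N + \id_{A \otimes A}) \circ \partial_i$. This reduces the problem to understanding how $\mu \circ \sigma$ interacts with the operator $N \otimes \id_A + \id_A \otimes N + \id_{A \otimes A}$ on $A \otimes A$.

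The key observation is that $\sigma$ commutes with $N \otimes \id_A + \id_A \otimes N$ (the flip merely swaps the two tensor legs, and this operator is symmetric in the two legs), and of course $\sigma$ commutes with $\id_{A\otimes A}$. Hence $\sigma \circ (N \otimes \id_A + \id_A \otimes N + \id_{A\otimes A}) = (N \otimes \id_A + \id_A \otimes N + \id_{A\otimes A}) \circ \sigma$, so after pushing $\sigma$ through we are left with computing $\mu \circ (N \otimes \id_A + \id_A \otimes N + \id_{A \otimes A})$. Now $\mu \circ (N \otimes \id_A + \id_A \otimes N)$ equals $N \circ \mu$ by the derivation property \eqref{eq:numop_deriv} of the number operator, and $\mu \circ \id_{A\otimes A} = \mu = \id_A \circ \mu$. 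Therefore $\mu \circ (N \otimes \id_A + \id_A \otimes N + \id_{A \otimes A}) = (N + \id_A) \circ \mu = L \circ \mu$.

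Putting the pieces together yields
\begin{align*}
\D_i \circ N &= \mu \circ \sigma \circ \partial_i \circ N\\
&= \mu \circ \sigma \circ (N \otimes \id_A + \id_A \otimes N + \id_{A\otimes A}) \circ \partial_i\\
&= \mu \circ (N \otimes \id_A + \id_A \otimes N + \id_{A\otimes A}) \circ \sigma \circ \partial_i\\
&= L \circ \mu \circ \sigma \circ \partial_i\\
&= L \circ \D_i,
\end{align*}
which is exactly \eqref{eq:number_operator_cyclic_derivative}. The argument is entirely formal, so there is no serious obstacle; the only point requiring a moment's care is the verification that $\sigma$ commutes with $N \otimes \id_A + \id_A \otimes N$, which is immediate from the identities $\sigma \circ (N \otimes \id_A) = (\id_A \otimes N) \circ \sigma$ and $\sigma \circ (\id_A \otimes N) = (N \otimes \id_A) \circ \sigma$. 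One should also recall at the outset that the definition of a graded GDQ ring is being used in full strength here (not merely the weak grading), since the step invoking \eqref{eq:numop_deriv} needs $N$ to be a derivation with respect to $\mu$.
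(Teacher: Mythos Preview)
Your proof is correct and follows essentially the same route as the paper's: both arguments expand $\D_i \circ N$ via the definition $\D_i = \mu \circ \sigma \circ \partial_i$, apply the coderivation identity \eqref{eq:numop_coderiv-N}, commute $\sigma$ past the symmetric operator $N \otimes \id_A + \id_A \otimes N + \id_{A\otimes A}$, and then use the derivation property \eqref{eq:numop_deriv} to collapse $\mu \circ (N \otimes \id_A + \id_A \otimes N + \id_{A\otimes A})$ to $L \circ \mu$. Your closing remark that the full grading (not merely a weak grading) is needed for the step invoking \eqref{eq:numop_deriv} is a correct and worthwhile observation.
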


\begin{proof}
Because $N$ is a derivation as guaranteed by Definition \ref{def:gradedGDQ} \eqref{it:numop_deriv}, the asserted commutation relation \eqref{eq:number_operator_cyclic_derivative} follows from the fact that $L$ is a coderivation by Definition \ref{def:gradedGDQ} \eqref{it:numop_coderiv}; indeed, using \eqref{eq:numop_deriv}, we get for $i=1,\dots,n$ that
\begin{align*}
\D_i \circ N &= \mu \circ \sigma \circ \partial_i \circ N\\
             &= \mu \circ \sigma \circ \big(N \otimes \id_A + \id_A \otimes N + \id_{A\otimes A}\big) \circ \partial_i\\
             &= \mu \circ \big(N \otimes \id_A + \id_A \otimes N + \id_{A \otimes A}\big) \circ \sigma \circ \partial_i\\
             &= (N + \id_A)\circ \mu \circ \sigma \circ \partial_i\\
             &= L \circ \D_i,
\end{align*}
which is what we wanted to show.
\end{proof}

\subsection{The divergence on multivariable GDQ rings}

At the core of Theorems \ref{thm:characterization_cyclic_gradients} and \ref{thm:characterization_free_gradients} is the formulation of a ``universal rule'' which allows one to find an antiderivative for the presumptive (cyclic) gradient. This is achieved by divergence operators, which we introduce next.

\begin{definition}\label{def:GDQring_divergence}
Let $(A,\mu,\partial)$ be a multivariable GDQ ring. An $n$-tuple $\partial^\star = (\partial_1^\star, \dots, \partial_n^\star)$ of linear mappings
$$\partial_i^\star:\ A \otimes A \to A, \qquad i=1,\dots,n$$
is called a \emph{divergence for $(A,\mu,\partial)$} if the identity
\begin{equation}\label{eq:GDQring_divergence}
\partial_j \circ \partial_i^\star = (\partial_i^\star \otimes \id_A) \circ (\id_A \otimes \partial_j) + (\id_A \otimes \partial_i^\star) \circ (\partial_j \otimes \id_A) + \delta_{i,j} \id_{A \otimes A}
\end{equation}
holds true for every choice of indices $i,j=1,\dots,n$. We will view the divergence as a linear map $\partial^\star: (A \otimes A)^n \to A$ by
$$\partial^\star(u) := \sum^n_{j=1} \partial_j^\star(u_j) \qquad\text{for each $u=(u_1,\dots,u_n) \in (A\otimes A)^n$}.$$
\end{definition}

\begin{remark}\label{rem:prototypical_divergence}
Let $(A,\mu,\partial)$ be a multivariable GDQ ring which is unital with unit element $1_A$. If $\partial^\star = (\partial_1^\star, \dots, \partial_n^\star)$ is a divergence for $(A,\mu,\partial)$, then $a_i := \partial_i^\star(1_A \otimes 1_A)$ for $i=1,\dots,n$ yields elements $a_1,\dots,a_n\in A$ which, according to \eqref{eq:GDQring_divergence}, satisfy
\begin{equation}\label{eq:free_variables}
\partial_j a_i = \delta_{i,j} 1_A \otimes 1_A \qquad\text{for $i,j=1,\dots,n$}.
\end{equation}
Conversely, whenever we find distinguished elements $a_1,\dots,a_n \in A$ with the property \eqref{eq:free_variables}, then a divergence for $(A,\mu,\partial)$ can be defined by $\partial_i^\star(u) := u \sharp a_i$ for $i=1,\dots,n$; this is a prototypical example of a divergence for which the Leibniz rule \eqref{eq:GDQring_Leibniz-sharp} verifies the condition \eqref{eq:GDQring_divergence}.
\end{remark}

Imposing the existence of a divergence turns out to be a rather strong constraint. For instance, it induces automatically a weak grading on the underlying multivariable GDQ ring; the precise statement is given in the following lemma.

\begin{lemma}\label{lem:divergence_induces_grading}
Let $(A,\mu,\partial)$ be a multivariable GDQ ring for which a divergence $\partial^\star$ exists. Define $N: A \to A$ by
$$N := \partial^\star \circ \partial = \sum^n_{i=1} \partial_i^\star \circ \partial_i$$
and $L := N + \id_A$; then $L$ is a weak grading on $(A,\mu,\partial)$. If we suppose in addition that each $\partial_j^\star$ is an $A$-bimodule homomorphism, then $L$ is even a grading on $(A,\mu,\partial)$.
\end{lemma}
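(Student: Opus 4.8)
The statement has two parts: first that $L = N + \id_A$ with $N = \partial^\star \circ \partial$ is a \emph{weak} grading, i.e.\ satisfies the coderivation property \eqref{eq:numop_coderiv-N}; second, that under the extra hypothesis that each $\partial_j^\star$ is an $A$-bimodule homomorphism, $N$ is also a derivation in the sense of \eqref{eq:numop_deriv}, upgrading $L$ to a genuine grading. I would prove these two parts separately, the first by a direct computation combining the defining relation \eqref{eq:GDQring_divergence} of the divergence with the joint coassociativity relation \eqref{eq:GDQring_coass}, the second by combining \eqref{eq:GDQring_divergence} with the Leibniz rule \eqref{eq:GDQring_Leibniz-sharp}.

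\textbf{Weak grading.} Fix $j\in\{1,\dots,n\}$. We must show $\partial_j \circ N = (N \otimes \id_A + \id_A \otimes N + \id_{A\otimes A}) \circ \partial_j$, i.e.\
$$\partial_j \circ \Big(\sum_{i=1}^n \partial_i^\star \circ \partial_i\Big) = \Big(\sum_{i=1}^n(\partial_i^\star\circ\partial_i)\otimes\id_A + \id_A\otimes\sum_{i=1}^n(\partial_i^\star\circ\partial_i) + \id_{A\otimes A}\Big)\circ\partial_j.$$
Expanding the left side term by term and applying \eqref{eq:GDQring_divergence} to each $\partial_j\circ\partial_i^\star$ gives
$$\partial_j \circ N = \sum_{i=1}^n\Big((\partial_i^\star\otimes\id_A)\circ(\id_A\otimes\partial_j) + (\id_A\otimes\partial_i^\star)\circ(\partial_j\otimes\id_A)\Big)\circ\partial_i + \id_{A\otimes A}\circ\partial_j,$$
where the Kronecker term $\delta_{i,j}\id_{A\otimes A}$ contributes precisely the summand $\id_{A\otimes A}\circ\partial_j$. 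It remains to identify the two families of composite terms with $(N\otimes\id_A)\circ\partial_j$ and $(\id_A\otimes N)\circ\partial_j$ respectively. For the first family, $(\partial_i^\star\otimes\id_A)\circ(\id_A\otimes\partial_j)\circ\partial_i$; here I would invoke the joint coassociativity relation \eqref{eq:GDQring_coass}, which gives $(\id_A\otimes\partial_j)\circ\partial_i = (\partial_i\otimes\id_A)\circ\partial_j$, so this term equals $(\partial_i^\star\otimes\id_A)\circ(\partial_i\otimes\id_A)\circ\partial_j = ((\partial_i^\star\circ\partial_i)\otimes\id_A)\circ\partial_j$; summing over $i$ yields $(N\otimes\id_A)\circ\partial_j$. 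For the second family, $(\id_A\otimes\partial_i^\star)\circ(\partial_j\otimes\id_A)\circ\partial_i$; coassociativity in the form $(\partial_j\otimes\id_A)\circ\partial_i = (\id_A\otimes\partial_i)\circ\partial_j$ turns it into $(\id_A\otimes\partial_i^\star)\circ(\id_A\otimes\partial_i)\circ\partial_j = (\id_A\otimes(\partial_i^\star\circ\partial_i))\circ\partial_j$, and summing over $i$ gives $(\id_A\otimes N)\circ\partial_j$. Assembling the three pieces yields exactly \eqref{eq:numop_coderiv-N}, so $L = N+\id_A$ is a weak grading.

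\textbf{Genuine grading.} Now assume each $\partial_j^\star$ is an $A$-bimodule homomorphism, i.e.\ $\partial_j^\star(b_1\cdot u\cdot b_2) = b_1\,\partial_j^\star(u)\,b_2$; equivalently $\partial_j^\star(u\sharp a) = \partial_j^\star(u)\cdot a$ and so on. I need to verify \eqref{eq:numop_deriv}, namely $N\circ\mu = \mu\circ(N\otimes\id_A + \id_A\otimes N)$, that is, $N(a_1 a_2) = N(a_1)\,a_2 + a_1\,N(a_2)$ for all $a_1,a_2\in A$. Write $N(a_1a_2) = \sum_i \partial_i^\star(\partial_i(a_1a_2))$ and expand $\partial_i(a_1a_2)$ by the Leibniz rule \eqref{eq:GDQring_Leibniz}: $\partial_i(a_1a_2) = (\partial_i a_1)\cdot a_2 + a_1\cdot(\partial_i a_2)$. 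Applying $\partial_i^\star$ and using that it is a bimodule map, $\partial_i^\star((\partial_i a_1)\cdot a_2) = \partial_i^\star(\partial_i a_1)\cdot a_2$ and $\partial_i^\star(a_1\cdot(\partial_i a_2)) = a_1\cdot\partial_i^\star(\partial_i a_2)$; summing over $i$ gives $N(a_1a_2) = N(a_1)\cdot a_2 + a_1\cdot N(a_2)$, which is \eqref{eq:numop_deriv}. Together with the weak grading property already established, this shows $L$ is a grading on $(A,\mu,\partial)$.

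\textbf{Main obstacle.} No step is deep; the only place requiring care is the bookkeeping in the weak grading computation — specifically, correctly matching the two orientations of the coassociativity relation \eqref{eq:GDQring_coass} to the two composite terms arising from \eqref{eq:GDQring_divergence} so that the left and right $N$-legs come out in the right tensor slots. Once the indices are tracked cleanly this is a routine manipulation; I would present it in the element-free composition calculus used throughout Section \ref{sec:DiffCalculus_GDQ} rather than on elements.
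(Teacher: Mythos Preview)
Your proof is correct and follows essentially the same approach as the paper: for the weak grading you expand $\partial_j\circ N$ via the divergence relation \eqref{eq:GDQring_divergence} and then rewrite each composite term using coassociativity \eqref{eq:GDQring_coass}, exactly as the paper does; for the full grading you apply the Leibniz rule and the bimodule property of $\partial_i^\star$. The only cosmetic difference is that the paper carries out the second computation in the element-free composition calculus (writing the bimodule property as $\partial_i^\star\circ(\mu\otimes\id_A)=\mu\circ(\id_A\otimes\partial_i^\star)$ and $\partial_i^\star\circ(\id_A\otimes\mu)=\mu\circ(\partial_i^\star\otimes\id_A)$), whereas you compute on elements --- but this is precisely the translation you yourself flag at the end, and your plan's reference to \eqref{eq:GDQring_Leibniz-sharp} is not actually needed since the ordinary Leibniz rule \eqref{eq:GDQring_Leibniz} suffices.
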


\begin{proof}
Let $a\in A$ be given. For $j=1,\dots,n$, we compute using the joint coassociativity relation \eqref{eq:GDQring_coass} of $\partial$ and the defining property of $\partial^\star$ that
\begin{align*}
\partial_j \circ N &= \sum^n_{i=1} (\partial_j \circ \partial_i^\star) \circ \partial_i\\
                   &= \partial_j + \sum^n_{i=1} \big((\partial_i^\star \otimes \id_A) \circ (\id_A \otimes \partial_j) \circ \partial_i + (\id_A \otimes \partial_i^\star) \circ (\partial_j \otimes \id_A) \circ \partial_i\big)\\
                   &= \partial_j + \sum^n_{i=1} \big((\partial_i^\star \otimes \id_A) \circ (\partial_i \otimes \id_A) \circ \partial_j + (\id_A \otimes \partial_i^\star) \circ (\id_A \otimes \partial_i) \circ \partial_j\big)\\
                &= (N \otimes \id_A + \id_A \otimes N + \id_{A \otimes A}) \circ \partial_j,
\end{align*}
from which we conclude that $L$ is a coderivation with respect to $\partial_i$; this verifies that $L$ is a weak grading on $(A,\mu,\partial)$.

If we suppose now in addition that each $\partial_i^\star$ is an $A$-bimodule map, i.e.,
$$\partial_i^\star \circ (\mu \otimes \id_A) = \mu \circ (\id_A \otimes \partial_i^\star) \qquad\text{and}\qquad \partial_i^\star \circ (\id_A \otimes \mu) = \mu \circ (\partial_i^\star \otimes \id_A),$$
then we may check by using \eqref{eq:GDQring_deriv} that
\begin{align*}
N \circ \mu &= \sum^n_{i=1} \partial_i^\star \circ (\partial_i \circ \mu)\\
            &= \sum^n_{i=1} \partial_i^\star \circ \big((\mu \otimes \id_A) \circ (\id_A \otimes \partial_i) + (\id_A \otimes \mu) \circ (\partial_i \otimes \id_A)\big)\\
            &= \mu \circ \sum^n_{i=1} \big((\id_A \otimes \partial_i^\star) \circ (\id_A \otimes \partial_i) + (\partial_i^\star \otimes \id_A) \circ (\partial_i \otimes \id_A)\big)\\
            &= \mu \circ (\id_A \otimes N + N \otimes \id_A),           
\end{align*}
as asserted.
\end{proof}

We point out that besides the canonical divergence which was presented in Remark \ref{rem:prototypical_divergence}, there are other, more ``exotic'' constructions for a divergence. This is the content of the following lemma, which is inspired by Proposition 4.3 in \cite{Voi98}.

\begin{lemma}
Let $(A,\mu,\partial)$ be a unital multivariable GDQ ring and fix any linear functional $\phi: A \to \C$. Suppose that we find elements $a_1,\dots,a_n\in A$ which satisfy $\partial_j a_i = \delta_{i,j} 1_A \otimes 1_A$ for $i,j=1,\dots,n$. For $i=1,\dots,n$, we define $\partial^\star_i: A \otimes A \to A$ by
$$\partial^\star_i(u) := u \sharp a_i - \mu\big((\id_A \otimes \phi \otimes \id_A)\big((\partial_i \otimes \id_A + \id_A \otimes \partial_i)(u)\big)\big)$$
for every $u\in A \otimes A$. Then $\partial^\star=(\partial^\star_1,\dots,\partial^\star_n)$ is a divergence for $(A,\mu,\partial)$.
\end{lemma}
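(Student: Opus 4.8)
The plan is to split $\partial^\star$ into the ``canonical'' divergence of Remark~\ref{rem:prototypical_divergence} plus a correction term, and to show that the correction obeys the \emph{homogeneous} version of the divergence relation \eqref{eq:GDQring_divergence}. Write $E_i(u) := u\sharp a_i$ and
$$T_i(u) := \mu\big((\id_A\otimes\phi\otimes\id_A)\big((\partial_i\otimes\id_A+\id_A\otimes\partial_i)(u)\big)\big),$$
so that $\partial^\star_i = E_i - T_i$ for $i=1,\dots,n$. By the hypothesis $\partial_j a_i = \delta_{i,j}\,1_A\otimes 1_A$ together with Remark~\ref{rem:prototypical_divergence}, the tuple $E=(E_1,\dots,E_n)$ is a divergence for $(A,\mu,\partial)$. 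For each fixed pair $i,j$, the assignment $g\mapsto \partial_j\circ g - (g\otimes\id_A)\circ(\id_A\otimes\partial_j) - (\id_A\otimes g)\circ(\partial_j\otimes\id_A)$ is linear in the linear map $g\colon A\otimes A\to A$; hence $\partial^\star$ satisfies \eqref{eq:GDQring_divergence} if and only if each $T_i$ satisfies the homogeneous relation
\begin{equation}\label{eq:hom-div}
\partial_j\circ T_i = (T_i\otimes\id_A)\circ(\id_A\otimes\partial_j) + (\id_A\otimes T_i)\circ(\partial_j\otimes\id_A),\qquad i,j=1,\dots,n.
\end{equation}

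To prove \eqref{eq:hom-div} I would first record a transparent description of $T_i$. Set $D^{\mathrm{l}}_i := (\phi\otimes\id_A)\circ\partial_i$ and $D^{\mathrm{r}}_i := (\id_A\otimes\phi)\circ\partial_i$, both linear maps $A\to A$. A direct evaluation on a simple tensor gives $T_i(a\otimes b) = D^{\mathrm{r}}_i(a)\cdot b + a\cdot D^{\mathrm{l}}_i(b)$, i.e.\ $T_i = \mu\circ(D^{\mathrm{r}}_i\otimes\id_A) + \mu\circ(\id_A\otimes D^{\mathrm{l}}_i)$. The crux is then the pair of commutation relations
$$\partial_j\circ D^{\mathrm{r}}_i = (\id_A\otimes D^{\mathrm{r}}_i)\circ\partial_j,\qquad \partial_j\circ D^{\mathrm{l}}_i = (D^{\mathrm{l}}_i\otimes\id_A)\circ\partial_j\qquad(i,j=1,\dots,n),$$
both of which follow from the joint coassociativity \eqref{eq:GDQring_coass}: since $\phi$ is scalar-valued it may be slid past any tensor leg, so $\partial_j\circ(\id_A\otimes\phi) = (\id_A\otimes\id_A\otimes\phi)\circ(\partial_j\otimes\id_A)$; composing with $\partial_i$ and invoking \eqref{eq:GDQring_coass} in the form $(\partial_j\otimes\id_A)\circ\partial_i = (\id_A\otimes\partial_i)\circ\partial_j$ gives $\partial_j\circ D^{\mathrm{r}}_i = (\id_A\otimes\id_A\otimes\phi)\circ(\id_A\otimes\partial_i)\circ\partial_j = (\id_A\otimes D^{\mathrm{r}}_i)\circ\partial_j$, and the relation for $D^{\mathrm{l}}_i$ is the mirror image (slide $\phi$ past the left leg and use \eqref{eq:GDQring_coass} in the form $(\id_A\otimes\partial_j)\circ\partial_i = (\partial_i\otimes\id_A)\circ\partial_j$).

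With these facts in hand, \eqref{eq:hom-div} is verified by evaluating both sides on a simple tensor $a\otimes b$. On the left, the Leibniz rule \eqref{eq:GDQring_Leibniz} for $\partial_j$ expands $\partial_j\big(T_i(a\otimes b)\big)$ into the four summands $\partial_j(D^{\mathrm{r}}_i(a))\cdot b$, $D^{\mathrm{r}}_i(a)\cdot\partial_j(b)$, $\partial_j(a)\cdot D^{\mathrm{l}}_i(b)$, $a\cdot\partial_j(D^{\mathrm{l}}_i(b))$. Unravelling the left and right $A$-actions on $A\otimes A$ in the two triple-tensor expressions on the right of \eqref{eq:hom-div} (keeping track of which adjacent pair of legs $T_i$ acts on) shows that the right-hand side equals $D^{\mathrm{r}}_i(a)\cdot\partial_j(b) + (\id_A\otimes D^{\mathrm{r}}_i)(\partial_j a)\cdot b + \partial_j(a)\cdot D^{\mathrm{l}}_i(b) + a\cdot(D^{\mathrm{l}}_i\otimes\id_A)(\partial_j b)$. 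The two ``mixed'' summands match literally, and the remaining two match by the commutation relations above. This establishes \eqref{eq:hom-div}, and with it the lemma.

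I do not expect a genuine obstacle here: the substance is carried entirely by the joint coassociativity \eqref{eq:GDQring_coass} and the Leibniz rule, while the hypothesis on $a_1,\dots,a_n$ is used only via Remark~\ref{rem:prototypical_divergence}. The one point requiring care is the bookkeeping of tensor legs in $A\otimes A\otimes A$ — in particular, which factor $\phi$ contracts and how the left/right $A$-actions distribute — so it is advisable to carry out the final verification of \eqref{eq:hom-div} slot by slot rather than in compressed notation.
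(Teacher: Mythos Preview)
Your proposal is correct and follows essentially the same strategy as the paper: split off the canonical divergence $u\mapsto u\sharp a_i$ (handled by Remark~\ref{rem:prototypical_divergence}) and verify the homogeneous divergence relation for the correction $T_i$ using only the Leibniz rule and joint coassociativity. The sole difference is cosmetic: the paper carries out the verification of \eqref{eq:hom-div} entirely at the operator level, expanding $\partial_j\circ T_i = \partial_j\circ\mu\circ(\id_A\otimes\phi\otimes\id_A)\circ(\partial_i\otimes\id_A+\id_A\otimes\partial_i)$ via \eqref{eq:GDQring_deriv} and then pushing the resulting $\partial_j$ past $(\id_A\otimes\phi\otimes\id_A)$ and invoking \eqref{eq:GDQring_coass}, whereas you repackage the same computation through the auxiliary maps $D^{\mathrm{l}}_i,D^{\mathrm{r}}_i$ and check on simple tensors. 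Your commutation relations $\partial_j\circ D^{\mathrm{r}}_i=(\id_A\otimes D^{\mathrm{r}}_i)\circ\partial_j$ and $\partial_j\circ D^{\mathrm{l}}_i=(D^{\mathrm{l}}_i\otimes\id_A)\circ\partial_j$ are exactly what the paper's operator manipulations amount to, so the two arguments are interchangeable.
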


\begin{proof}
For $i=1,\dots,n$, we define linear maps $T_i: A \otimes A \to A$ by
\begin{equation}\label{eq:T-relation}
T_i := \mu \circ (\id_A \otimes \phi \otimes \id_A) \circ (\partial_i \otimes \id_A + \id_A \otimes \partial_i).
\end{equation}
In order to verify the identity \eqref{eq:GDQring_divergence} for $\partial_i^\star$ as defined in the lemma, it suffices to prove that
$$\partial_j \circ T_i = (T_i \otimes \id_A) \circ (\id_A \otimes \partial_j) + (\id_A \otimes T_i) \circ (\partial_j \otimes \id_A)$$
for $i,j=1,\dots,n$. Using the fact that $\partial_j$ is a derivation, we first get that
\begin{align*}
\partial_j \circ T_i
&= (\mu \otimes \id_A) \circ (\id_A \otimes \partial_j) \circ (\id_A \otimes \phi \otimes \id_A) \circ (\partial_i \otimes \id_A + \id_A \otimes \partial_i)\\
& \quad + (\id_A \otimes \mu) \circ (\partial_j \otimes \id_A) \circ (\id_A \otimes \phi \otimes \id_A) \circ (\partial_i \otimes \id_A + \id_A \otimes \partial_i).
\end{align*}
Next, using the joint coassociativity relation, we compute that
\begin{align*}
\lefteqn{(\id_A \otimes \partial_j) \circ (\id_A \otimes \phi \otimes \id_A) \circ (\partial_i \otimes \id_A + \id_A \otimes \partial_i)}\\
&= (\id_A \otimes \phi \otimes \id_A \otimes \id_A) \circ (\id_A \otimes \id_A \otimes \partial_j) \circ  (\partial_i \otimes \id_A + \id_A \otimes \partial_i)\\
&= (\id_A \otimes \phi \otimes \id_A \otimes \id_A) \circ (\partial_i \otimes \id_A \otimes \id_A + \id_A \otimes \partial_i \otimes \id_A) \circ (\id_A \otimes \partial_j)
\end{align*}
and in turn
\begin{align*}
(\mu \otimes \id_A) \circ (\id_A \otimes \partial_j) \circ (\id_A \otimes \phi \otimes \id_A) &\circ (\partial_i \otimes \id_A + \id_A \otimes \partial_i)\\
&= (T_i \otimes \id_A) \circ (\id_A \otimes \partial_j).
\end{align*}
Analogously, we get that
\begin{align*}
(\id_A \otimes \mu) \circ (\partial_j \otimes \id_A) \circ (\id_A \otimes \phi \otimes \id_A) &\circ (\partial_i \otimes \id_A + \id_A \otimes \partial_i)\\
&= (\id_A \otimes T_i) \circ (\partial_j \otimes \id_A).
\end{align*}
In summary, we obtain \eqref{eq:T-relation}, as desired.
\end{proof}

\begin{remark}
In the situation of the previous lemma, suppose that $(A,\phi)$ is a $\ast$-probability space with $\phi$ being tracial and faithful. Further, let $(a_1,\dots,a_n)$ be semicircular system, i.e., a family of freely independent semicircular elements with mean $0$ and variance $1$. Then \cite[Proposition 4.3]{Voi98} tells us that the divergence $\partial^\star$ satisfies $\langle \partial_i a, u \rangle_\phi = \langle a, \partial^\star_i u \rangle_\phi$ for all $a\in A$, $u\in A \otimes A$, and $i=1,\dots,n$, with respect to the induced inner product $\langle a_1, a_2 \rangle_\phi = \phi(a_2^\ast a_1)$. This justifies our $\star$-notation for a divergence.
\end{remark}

Further, we need the notion of a cyclic divergence; the precise definition reads as follows.

\begin{definition}\label{def:GDQring_cyclic_divergence}
Let $(A,\mu,\partial)$ be a multivariable GDQ ring and let $\partial^\star$ be a divergence for $(A,\mu,\partial)$. An $n$-tuple $\D^\star=(\D_1^\star,\dots,\D_n^\star)$ of linear mappings
$$\D_i^\star:\ A \to A, \qquad i=1,\dots,n$$
is called a \emph{cyclic divergence for $(A,\mu,\partial)$ (compatible with $\partial^\star$)} if the identity
$$\D_j \circ \D_i^\star = \partial_i^\star \circ \sigma \circ \partial_j + \delta_{i,j} \id_A$$
holds true for every choice of indices $i,j=1,\dots,n$. We will view the cyclic divergence as a linear map $\D^\star: A^n \to A$ by
$$\D^\star(a) := \sum^n_{i=1} \D_i^\star(a_i) \qquad\text{for each $a=(a_1,\dots,a_n) \in A^n$}.$$
For a given cyclic divergence $\D^\star$, the associated linear map $C: A \to A$ which is defined by
$$C := \D^\star \circ \D = \sum^n_{i=1} \D_i^\star \circ \D_i$$
is called the \emph{cyclic symmetrization operator}.
\end{definition}

Similar to the commutation relation \eqref{eq:number_operator_cyclic_derivative} formulated in Lemma \ref{lem:number_operator_cyclic_derivative}, we have the following result.

\begin{lemma}\label{lem:symmetrization_operator_cyclic_derivative}
Let $(A,\mu,\partial)$ be a multivariable GDQ ring with divergence $\partial^\star$ which is endowed with the induced weak grading $L = \partial^\star \circ \partial + \id_A$ that was constructed in Lemma \ref{lem:divergence_induces_grading}. Consider further a cyclic divergence $\D^\star = (\D_1^\star,\dots,\D_n^\star)$ compatible with $\partial^\star$ and let $C$ be the associated cyclic symmetrization operator. Then, for $j=1,\dots,n$, we have the commutation relation
\begin{equation}\label{eq:symmetrization_operator_cyclic_derivative}
\D_j \circ C = L \circ \D_j.
\end{equation}
\end{lemma}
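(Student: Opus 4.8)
The plan is to compute $\D_j \circ C$ directly from the definitions, pushing the number operator/grading through the cyclic derivative using the relation $\D_j \circ C = \sum_{i=1}^n \D_j \circ \D_i^\star \circ \D_i$ together with the defining identity of the cyclic divergence, $\D_j \circ \D_i^\star = \partial_i^\star \circ \sigma \circ \partial_j + \delta_{i,j}\id_A$. Substituting this in gives
\begin{align*}
\D_j \circ C &= \sum^n_{i=1} \big(\partial_i^\star \circ \sigma \circ \partial_j + \delta_{i,j}\id_A\big) \circ \D_i\\
             &= \D_j + \sum^n_{i=1} \partial_i^\star \circ \sigma \circ \partial_j \circ \D_i.
\end{align*}
So the whole statement reduces to showing that $\sum_{i=1}^n \partial_i^\star \circ \sigma \circ \partial_j \circ \D_i = N \circ \D_j$, where $N = \partial^\star \circ \partial = \sum_{i=1}^n \partial_i^\star \circ \partial_i$; adding $\D_j$ then turns the right-hand side into $L \circ \D_j$.

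The key step, and the one I expect to be the main obstacle, is to rewrite $\sigma \circ \partial_j \circ \D_i$ in a form involving $\partial_i \circ \D_j$, so that the sum over $i$ reassembles into $\sum_i \partial_i^\star \circ \partial_i$ applied to $\D_j$. For this I would invoke Lemma~\ref{lem:GDQring_coass_cyclic}, which is precisely the statement $\sigma \circ \partial_i \circ \D_j = \partial_j \circ \D_i$ for all $i,j$; equivalently, applying $\sigma$ on both sides and relabeling, $\partial_j \circ \D_i = \sigma \circ \partial_i \circ \D_j$, hence $\sigma \circ \partial_j \circ \D_i = \partial_i \circ \D_j$. Substituting this into the sum yields
$$\sum^n_{i=1} \partial_i^\star \circ \sigma \circ \partial_j \circ \D_i = \sum^n_{i=1} \partial_i^\star \circ \partial_i \circ \D_j = N \circ \D_j,$$
and therefore $\D_j \circ C = \D_j + N \circ \D_j = (N + \id_A) \circ \D_j = L \circ \D_j$, as claimed.

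The argument is thus a short chain of substitutions, and the only genuine input beyond bookkeeping is Lemma~\ref{lem:GDQring_coass_cyclic} (the cyclic coassociativity relation), which in turn rests on the joint coassociativity \eqref{eq:GDQring_coass} and the Leibniz rule; the induced weak grading $L = N + \id_A$ from Lemma~\ref{lem:divergence_induces_grading} enters only through the definition $N = \partial^\star \circ \partial$. One should double-check that no compatibility of $\D^\star$ with the grading is secretly needed: since the computation only uses the defining identity of $\D^\star$ relative to $\partial^\star$ and the fixed $N = \partial^\star\circ\partial$, it goes through for any cyclic divergence compatible with $\partial^\star$, with no further hypotheses. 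Note in particular that we do not need $\partial_i^\star$ to be bimodule maps here — the weak grading suffices, which is why the statement is phrased in terms of $L$ rather than requiring $L$ to be an honest grading.
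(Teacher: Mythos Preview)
Your proof is correct and essentially identical to the paper's own argument: both expand $\D_j\circ C=\sum_i(\D_j\circ\D_i^\star)\circ\D_i$, apply the defining identity of the cyclic divergence, and then invoke Lemma~\ref{lem:GDQring_coass_cyclic} in the form $\sigma\circ\partial_j\circ\D_i=\partial_i\circ\D_j$ to collapse the sum into $N\circ\D_j$. Your closing observations about only needing the weak grading and no bimodule hypothesis on $\partial_i^\star$ are accurate.
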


\begin{proof}
By using defining property of $\D^\star$ and Lemma \ref{lem:GDQring_coass_cyclic}, we get that
\begin{align*}
\D_j \circ C &= \sum^n_{i=1} (\D_j \circ \D_i^\star) \circ \D_i\\
             &= \D_j + \sum^n_{i=1} \partial_i^\star \circ (\sigma \circ \partial_j \circ \D_i)\\
						 &= \D_j + \sum^n_{i=1} (\partial_i^\star \circ \partial_i) \circ \D_j\\
						 &= L \circ \D_j,
\end{align*}
which is the asserted commutation relation.
\end{proof}

In fact, a cyclic divergence can be constructed from a divergence $\partial^\star$ whenever the underlying multivariable GDQ ring $(A,\mu,\partial)$ is unital and the divergence $\partial^\star$ consists of $A$-bimodule homomorphisms; this is explained in the following lemma.
 
\begin{lemma}\label{lem:compatible_cyclic_divergence}
Let $(A,\mu,\partial)$ be a unital multivariable GDQ ring with unit $1_A$ and let $\partial^\star=(\partial_1^\star,\dots,\partial_n^\star$) be a divergence for $(A,\mu,\partial)$. For $i=1,\dots,n$, we define a linear mapping $\D_i^\star: A \to A$ by $\D_i^\star(a) := \partial_i^\star(a \otimes 1_A)$. If each $\partial_i^\star$ is an $A$-bimodule homomorphism, then $\D^\star=(\D_1^\star,\dots,\D_n^\star)$ gives a cyclic divergence for $(A,\mu,\partial)$ compatible with $\partial^\star$. The same is true if $\D_i^\star: A \to A$ is defined instead by $\D_i^\star(a) := \partial_i^\star(1_A \otimes a)$.
\end{lemma}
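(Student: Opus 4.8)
The plan is to verify the defining identity of Definition~\ref{def:GDQring_cyclic_divergence} directly, namely that
$$\D_j \circ \D_i^\star = \partial_i^\star \circ \sigma \circ \partial_j + \delta_{i,j} \id_A$$
for $i,j = 1,\dots,n$, using the definition $\D_i^\star(a) = \partial_i^\star(a \otimes 1_A)$, the definition $\D_j = \mu \circ \sigma \circ \partial_j$, the divergence relation \eqref{eq:GDQring_divergence}, and the $A$-bimodule homomorphism property of $\partial_i^\star$. Throughout I would write things operator-theoretically (composition of maps on the appropriate tensor powers of $A$), just as in the proofs of Lemmas~\ref{lem:GDQring_coass_cyclic} and \ref{lem:divergence_induces_grading}, since the variable-free style keeps the bookkeeping manageable.

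First I would note that $\D_i^\star = \partial_i^\star \circ r$, where $r : A \to A \otimes A$ is the map $a \mapsto a \otimes 1_A$; the key elementary facts are that $r$ is a right-inverse to $\mu$ (i.e.\ $\mu \circ r = \id_A$) and, since $\partial_j 1_A = 0$ (forced by the Leibniz rule, as noted after Definition~\ref{def:GDQring}), applying $\partial_j$ to $a \otimes 1_A$ via the product rule \eqref{eq:GDQring_Leibniz} gives $\partial_j(a \cdot (1_A \otimes 1_A)) = (\partial_j a)\cdot(1_A \otimes 1_A)$, i.e.\ on the level of maps $\partial_j \circ r = (\id_A \otimes r) \circ \partial_j$ (placing the extra $1_A$ in the last tensor slot). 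Then I would compute
\begin{align*}
\D_j \circ \D_i^\star
&= \mu \circ \sigma \circ \partial_j \circ \partial_i^\star \circ r\\
&= \mu \circ \sigma \circ \big((\partial_i^\star \otimes \id_A)\circ(\id_A \otimes \partial_j) + (\id_A \otimes \partial_i^\star)\circ(\partial_j \otimes \id_A) + \delta_{i,j}\id_{A\otimes A}\big) \circ r,
\end{align*}
using \eqref{eq:GDQring_divergence}. The $\delta_{i,j}$ term contributes $\delta_{i,j}\,\mu\circ\sigma\circ r$, and since $\sigma(a\otimes 1_A) = 1_A \otimes a$ one gets $\mu\circ\sigma\circ r = \id_A$, producing the required $\delta_{i,j}\id_A$. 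For the two main terms I would feed in $\partial_j \circ r = (\id_A \otimes r)\circ \partial_j$ to push $r$ inward, then use the $A$-bimodule homomorphism hypothesis on $\partial_i^\star$ — in the forms $\mu\circ(\id_A\otimes\partial_i^\star) = \partial_i^\star\circ(\mu\otimes\id_A)$ and $\mu\circ(\partial_i^\star\otimes\id_A) = \partial_i^\star\circ(\id_A\otimes\mu)$, exactly as used in the second half of the proof of Lemma~\ref{lem:divergence_induces_grading} — together with the intertwining relations between $\sigma$ and $\mu\otimes\id_A$, $\id_A\otimes\mu$ (and the associated $\pi_{(123)}$, $\pi_{(132)}$) recorded at the start of the proof of Lemma~\ref{lem:GDQring_coass_cyclic}. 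The effect is to collapse $\mu\circ\sigma\circ(\id_A\otimes\partial_i^\star)$ and $\mu\circ\sigma\circ(\partial_i^\star\otimes\id_A)$ back through $\partial_i^\star$, and after the extra trailing $1_A$ coming from $r$ is absorbed by a $\mu$, both terms recombine into $\partial_i^\star\big((\mu\otimes\id_A)\circ(\id_A\otimes\partial_j) + \dots\big)$, which by \eqref{eq:GDQring_deriv} (the derivation property of $\partial_j$, read as $\partial_j\circ\mu = \cdots$) equals $\partial_i^\star\circ\partial_j\circ\mu$ applied to $r$, hence $\partial_i^\star\circ\partial_j$; finally one composes with the flip at the right spot to land on $\partial_i^\star\circ\sigma\circ\partial_j$. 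The second assertion — for $\D_i^\star(a) = \partial_i^\star(1_A\otimes a)$ — is handled by the mirror-image argument, swapping the roles of the two tensor legs and of $\sharp_1$/$\sharp_2$, using instead $\partial_j(1_A\otimes a) = (1_A\otimes 1_A)\cdot\partial_j a$ so that $\partial_j\circ\ell = (\ell\otimes\id_A)$-type relation holds with $\ell(a) = 1_A\otimes a$.

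The main obstacle I anticipate is purely combinatorial: keeping track of which tensor slot the spurious $1_A$ (introduced by $r$) occupies after each application of coassociativity and of the $\sigma$–$\mu$ intertwiners, and making sure it is always in a position where a subsequent $\mu$ can reabsorb it without disturbing the rest. This is the same bookkeeping that powers Lemma~\ref{lem:GDQring_coass_cyclic}, so the permutation identities $\sigma\circ(\id_A\otimes\mu) = (\mu\otimes\id_A)\circ\pi_{(132)}$ and $\sigma\circ(\mu\otimes\id_A) = (\id_A\otimes\mu)\circ\pi_{(123)}$, plus their partial-derivative analogues, should suffice; the only genuinely new ingredient is the bimodule-homomorphism hypothesis, which is exactly what lets the outer $\mu\circ\sigma$ commute past $\partial_i^\star$. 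Once the two main terms are seen to reassemble into $\partial_i^\star\circ\sigma\circ\partial_j$ and the diagonal term into $\delta_{i,j}\id_A$, the proof is complete.
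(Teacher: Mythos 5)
Your overall strategy---verifying the defining identity of Definition \ref{def:GDQring_cyclic_divergence} operator-theoretically by writing $\D_i^\star = \partial_i^\star\circ r$ with $r(a) = a\otimes 1_A$, expanding $\partial_j\circ\partial_i^\star$ via \eqref{eq:GDQring_divergence}, and using $\mu\circ\sigma\circ r = \id_A$ to produce the $\delta_{i,j}\id_A$---is exactly the paper's, and the diagonal term is handled correctly. But the endgame you describe for the two non-diagonal terms does not work as stated. First, there are not two surviving terms to ``recombine'': the term $\mu\circ\sigma\circ(\partial_i^\star\otimes\id_A)\circ(\id_A\otimes\partial_j)\circ r$ vanishes identically, because $(\id_A\otimes\partial_j)(a\otimes 1_A) = a\otimes\partial_j 1_A = 0$. (Your shorthand ``$\partial_j\circ r = (\id_A\otimes r)\circ\partial_j$'' blurs precisely this point: only the $(\partial_j\otimes\id_A)$-leg of the expansion survives.) Consequently the Leibniz-rule reassembly into $\partial_i^\star\circ\partial_j\circ\mu\circ r = \partial_i^\star\circ\partial_j$ never happens---and it is just as well, since $\partial_i^\star\circ\partial_j$ is not the target; the target is $\partial_i^\star\circ\sigma\circ\partial_j$, and ``composing with the flip at the right spot'' at the end is not a step you are entitled to take. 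The flip has to come out of the computation itself.

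It does, from the single surviving term: using $\sigma\circ(\id_A\otimes\partial_i^\star) = (\partial_i^\star\otimes\id_A)\circ\pi_{(132)}$ and then the bimodule hypothesis in the form $\mu\circ(\partial_i^\star\otimes\id_A) = \partial_i^\star\circ(\id_A\otimes\mu)$, that term becomes $\partial_i^\star\circ(\id_A\otimes\mu)\circ\pi_{(132)}\circ(\partial_j\otimes\id_A)\circ r$, and a direct check gives $(\id_A\otimes\mu)\circ\pi_{(132)}\circ(\partial_j\otimes\id_A)\circ r = \sigma\circ\partial_j$ (elementwise: $a'\otimes a''\otimes 1_A\mapsto a''\otimes 1_A\otimes a'\mapsto a''\otimes a'$). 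This is exactly how the paper concludes. So the corrections needed are local---discard the vanishing term at the outset instead of trying to pair it with the other one, and let the $\sigma$ emerge from the $\pi_{(132)}$ bookkeeping rather than appending it by fiat---but as written the final assembly is wrong, not merely under-detailed. The mirror case $\D_i^\star(a) = \partial_i^\star(1_A\otimes a)$ is fine once the same correction is made there (now the other term dies, since $(\partial_j\otimes\id_A)(1_A\otimes a) = 0$).
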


\begin{proof}
Let $\iota_A: A \to A \otimes A$ be defined by $\iota_A(a) := a \otimes 1_A$ such that $\D_i^\star = \partial_i^\star \circ \iota_A$ for $i=1,\dots,n$. Using the defining property of $\partial_i^\star$ and that $\mu \circ (\partial_i^\star \otimes \id_A) = \partial_i^\star \circ (\id_A \otimes \mu)$ holds as $\partial_i^\star$ is an $A$-bimodule homomorphism, we may compute for $i,j=1,\dots,n$ that
\begin{align*}
\D_j \circ \D_i^\star &= \mu \circ \sigma \circ (\partial_j \circ \partial_i^\star) \circ \iota_A\\
                      &= \mu \circ \sigma \circ \big((\id_A \otimes \partial_i^\star) \circ (\partial_j \otimes \id_A) + \delta_{i,j} \id_{A \otimes A}\big) \circ \iota_A\\
                      &= \mu \circ (\partial_i^\star \otimes \id_A) \circ \pi_{(132)} \circ (\partial_j \otimes \id_A) \circ \iota_A + \delta_{i,j} \id_A\\
                      &= \partial_i^\star \circ (\id_A \otimes \mu) \circ \pi_{(132)} \circ (\partial_j \otimes \id_A) \circ \iota_A + \delta_{i,j} \id_A\\
											&= \partial_i^\star \circ \sigma \circ \partial_j + \delta_{i,j} \id_A,
\end{align*}
where, in the second step, we have used that $(\id_A \otimes \partial_j) \circ \iota_A = 0$, in the third step, that $\sigma \circ (\id_A \otimes \partial_i^\star) = (\partial_i^\star \otimes \id_A) \circ \pi_{(132)}$, and $$(\id_A \otimes \mu) \circ \pi_{(132)} \circ (\partial_j \otimes \id_A) \circ \iota_A = \sigma \circ \partial_j$$ in the last step. The proof for the additional statement is analogous.
\end{proof}

\subsection{Irrotational and cyclically irrotational tuples}\label{subsec:irrotational_tules}

The purpose of a divergence or a cyclic divergence, respectively, is to construct antiderivatives (modulo the associated number operators); such ``integrations'' are possible only under some suitable ``integrability condition''. In classical analysis, this is known as the Schwarz integrability condition, which corresponds in three dimensions to the vanishing of the curl; inspired by the classical terminology of irrotational (smooth) vector fields, we make the following definition.

\begin{definition}\label{def:irrotational}
Let $(A,\mu,\partial)$ be a multivariable GDQ ring with associated cyclic gradient $\D$.
\begin{enumerate}
 \item An $n$-tuple $u=(u_1,\dots,u_n)\in (A\otimes A)^n$ is said to be \emph{irrotational} if it satisfies the condition that
 $$(\id_A \otimes \partial_i)(u_j) = (\partial_j \otimes \id_A)(u_i) \qquad\text{for $i,j=1,\dots,n$}.$$
 \item An $n$-tuple $a=(a_1,\dots,a_n)\in A^n$ is said to be \emph{cyclically irrotational} if it satisfies the condition that
 $$\partial_i a_j = \sigma (\partial_j a_i) \qquad\text{for $i,j=1,\dots,n$}.$$
\end{enumerate}
\end{definition}

We collect here some important properties of the classes of irrotational and cyclically irrotational tuples. In order to simplify the notation, it is appropriate to introduce the following notation: for every weakly graded multivariable GDQ ring $(A,\mu,\partial)$ with weak grading $L$ and associated number operator $N$, we define for every $k\geq 1$ an operator $N_k: A^{\otimes k} \to A^{\otimes k}$ by
$$N_k := (k-1) \id_{A^{\otimes k}} + \sum^k_{p=1} \id_A^{\otimes (p-1)} \otimes N \otimes \id_A^{\otimes (k-p)}$$
and an operator $L_k: A^{\otimes k} \to A^{\otimes k}$ by $L_k := N_k + \id_{A^{\otimes k}}$, which can be written as
$$L_k = \sum^k_{p=1} \id_A^{\otimes (p-1)} \otimes L \otimes \id_A^{\otimes (k-p)}.$$
Note that in particular $N_1 = N$ and $N_2 = N \otimes \id_A + \id_A \otimes N + \id_{A \otimes A}$ as well as $L_1 = L$ and $L_2 = L \otimes \id_A + \id_A \otimes L$. Since $L$ is a weak grading, we have \eqref{eq:numop_coderiv-N}, from which it is possible to deduce several other commutation relations between $\partial$ and the operators $N_k$; for later use, we record that for $i=1,\dots,n$
\begin{equation}\label{eq:numop_coderiv-N2}
\begin{aligned}
(\partial_i \otimes \id_A) \circ N_2 &= N_3 \circ (\partial_i \otimes \id_A) \qquad \text{and}\\
(\id_A \otimes \partial_i) \circ N_2 &= N_3 \circ (\id_A \otimes \partial_i).
\end{aligned}
\end{equation}
Further, we notice that both $N_k$ and $L_k$ commute with $\pi_\sigma$ for each $\sigma \in S_k$.

With the following lemma, we address properties of irrotational tuples.

\begin{lemma}\label{lem:irrotational_properties}
Let $(A,\mu,\partial)$ be a multivariable GDQ ring with divergence $\partial^\star$; denote by $L$ the associated weak grading and by $N$ the corresponding number operator. Then the following statements hold true:
\begin{enumerate}
 \item If $u =(u_1,\dots,u_n) \in (A\otimes A)^n$ is irrotational, then $(N_2 u_1,\dots, N_2 u_n)$ is irrotational as well.
 \item\label{it:irrotational_properties-ii} If $N_3$ is injective and $u=(u_1,\dots,u_n) \in (A\otimes A)^n$ is such that the $n$-tuple $(N_2 u_1,\dots, N_2 u_n)$ is irrotational, then $u$ is irrotational.
 \item\label{it:irrotational_properties-iii} For each irrotational $u=(u_1,\dots,u_n) \in (A\otimes A)^n$, we have that
 $$\partial_j\big(\partial^\star u\big) = N_2 u_j \qquad\text{for $j=1,\dots,n$}$$
 and therefore
 $$(N \circ \partial^\star)(u) = \partial^\star(N_2 u_1,\dots, N_2 u_n).$$
\end{enumerate} 
\end{lemma}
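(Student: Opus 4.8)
The plan is to reduce all three parts to the two commutation relations \eqref{eq:numop_coderiv-N2} between the $\partial_i$ and the operators $N_2, N_3$, together with the defining identity \eqref{eq:GDQring_divergence} of the divergence and the formula $N = \sum_{i=1}^n \partial_i^\star \circ \partial_i$ for the induced number operator (Lemma \ref{lem:divergence_induces_grading}).

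For part (i), I would simply apply \eqref{eq:numop_coderiv-N2}: if $u$ is irrotational, then for $i,j=1,\dots,n$,
$$(\id_A \otimes \partial_i)(N_2 u_j) = N_3\big((\id_A \otimes \partial_i)(u_j)\big) = N_3\big((\partial_j \otimes \id_A)(u_i)\big) = (\partial_j \otimes \id_A)(N_2 u_i),$$
the middle equality being irrotationality of $u$. Part (ii) is the same chain read backwards: it shows that $N_3$ applied to $(\id_A \otimes \partial_i)(u_j) - (\partial_j \otimes \id_A)(u_i)$ equals $(\id_A \otimes \partial_i)(N_2 u_j) - (\partial_j \otimes \id_A)(N_2 u_i)$, which vanishes by hypothesis; injectivity of $N_3$ then forces $(\id_A \otimes \partial_i)(u_j) = (\partial_j \otimes \id_A)(u_i)$, i.e.\ $u$ is irrotational.

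Part (iii) is the computational heart. I would expand $\partial_j(\partial^\star u) = \sum_{i=1}^n \partial_j(\partial_i^\star u_i)$ via \eqref{eq:GDQring_divergence}, producing for each $i$ the three summands $(\partial_i^\star \otimes \id_A)\circ(\id_A \otimes \partial_j)(u_i)$, $(\id_A \otimes \partial_i^\star)\circ(\partial_j \otimes \id_A)(u_i)$, and $\delta_{i,j} u_i$. In the first I rewrite $(\id_A \otimes \partial_j)(u_i) = (\partial_i \otimes \id_A)(u_j)$ and in the second $(\partial_j \otimes \id_A)(u_i) = (\id_A \otimes \partial_i)(u_j)$, both by irrotationality; the first summand then collapses to $\big((\partial_i^\star \circ \partial_i)\otimes \id_A\big)(u_j)$ and the second to $\big(\id_A \otimes (\partial_i^\star \circ \partial_i)\big)(u_j)$. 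Summing over $i$ and invoking $N = \sum_i \partial_i^\star \circ \partial_i$ gives $(N \otimes \id_A + \id_A \otimes N + \id_{A\otimes A})(u_j) = N_2 u_j$. The displayed consequence then follows by applying $\partial^\star$ and using $N = \partial^\star \circ \partial$: indeed $N(\partial^\star u) = \partial^\star\big(\partial(\partial^\star u)\big) = \sum_{j=1}^n \partial_j^\star(N_2 u_j) = \partial^\star(N_2 u_1,\dots,N_2 u_n)$.

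The only mildly delicate point — and the main thing to get right — is the bookkeeping of tensor legs when collapsing $(\partial_i^\star \otimes \id_A)\circ(\partial_i \otimes \id_A)$ to $(\partial_i^\star \circ \partial_i)\otimes \id_A$ and the analogous reduction on the other leg; this is just the interchange law for tensor products of linear maps, but it must be applied to the correct factors so that the two copies of $\partial_i^\star \circ \partial_i$ land on the first and second legs of $u_j$ respectively, assembling $N \otimes \id_A$ and $\id_A \otimes N$. No further obstacle is expected.
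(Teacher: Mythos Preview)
Your proposal is correct and follows essentially the same route as the paper's proof: parts (i) and (ii) are handled via the commutation relations \eqref{eq:numop_coderiv-N2} exactly as in the paper, and part (iii) expands $\partial_j(\partial^\star u)$ via \eqref{eq:GDQring_divergence}, substitutes the irrotationality relations in both cross terms, and collapses to $(N\otimes\id_A + \id_A\otimes N + \id_{A\otimes A})(u_j) = N_2 u_j$, with the final consequence obtained by composing with $\partial^\star$. The tensor-leg bookkeeping you flag is indeed routine and is carried out the same way in the paper.
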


\begin{proof}
(i) Since $L$ is a weak grading, we have \eqref{eq:numop_coderiv-N2}; hence, for any $n$-tuple $u=(u_1,\dots,u_n) \in (A\otimes A)^n$ which is irrotational, we obtain that
$$(\partial_i \otimes \id_A)(N_2 u_j) = N_3\big((\partial_i \otimes \id_A)(u_j)\big) = N_3\big((\id_A \otimes \partial_j)(u_i)\big) = (\id_A \otimes \partial_j)(N_2 u_i)$$
for $i,j=1,\dots,n$, which shows that $(N_2 u_1,\dots, N_2 u_n)$ is irrotational.

(ii) Let $u=(u_1,\dots,u_n) \in (A\otimes A)^n$ be such that $(N_2 u_1,\dots, N_2 u_n)$ is irrotational. We conclude from \eqref{eq:numop_coderiv-N2} that
$$N_3\big((\partial_i \otimes \id_A)(u_j)\big) = (\partial_i \otimes \id_A)(N_2 u_j) = (\id_A \otimes \partial_j)(N_2 u_i) = N_3\big((\id_A \otimes \partial_j)(u_i)\big)$$
for $i,j=1,\dots,n$. If $N_3$ is injective, then the latter implies that $u$ is irrotational.

(iii) First, we observe that we have for each irrotational $u=(u_1,\dots,u_n) \in (A\otimes A)^n$, due to the defining property \eqref{eq:GDQring_divergence} of the divergence $\partial^\star$, that
\begin{align*}
\partial_j\big(\partial^\star u\big) &= \sum^n_{i=1} \partial_j(\partial_i^\star u_i)\\
                                     &= u_j + \sum^n_{i=1} \Big((\partial_i^\star \otimes \id_A)\big((\id_A \otimes \partial_j) u_i\big) + (\id_A \otimes \partial_i^\star)\big((\partial_j \otimes \id_A) u_i\big)\Big)\\
                                     &= u_j + \sum^n_{i=1} \Big((\partial_i^\star \otimes \id_A)\big((\partial_i \otimes \id_A) u_j\big) + (\id_A \otimes \partial_i^\star)\big((\id_A \otimes \partial_i) u_j\big)\Big)\\
                                     &= N_2 u_j
\end{align*}
for $j=1,\dots,n$. From the latter, we conclude that
$$(N \circ \partial^\star)(u) = \sum^n_{j=1} \partial^\star_j\big(\partial_j(\partial^\star u)\big) = \sum^n_{j=1} \partial^\star_j\big( N_2 u_j\big) = \partial^\star(N_2 u_1,\dots, N_2 u_n).$$
This proves the assertions made in (iii).
\end{proof}

The following lemma addresses analogously the properties of cyclically irrotational tuples.

\begin{lemma}\label{lem:cyclically_irrotational_properties}
Let $(A,\mu,\partial)$ be a multivariable GDQ ring with divergence $\partial^\star$; denote by $L$ the associated weak grading and by $N$ the corresponding number operator. Furthermore, let $\D^\star$ be a cyclic divergence compatible with $\partial^\star$; denote by $C$ the corresponding cyclic symmetrization operator. Then the following statements hold true:
\begin{enumerate}
 \item If $a=(a_1,\dots,a_n)\in A^n$ is cyclically irrotational, then $(L a_1,\dots,L a_n)$ is cyclically irrotational as well.
 \item\label{it:cyclically_irrotational_properties-ii} If $L_2$ is injective and $a=(a_1,\dots,a_n) \in A^n$ is such that $(L a_1,\dots,L a_n)$ is cyclically irrotational, then $a$ is cyclically irrotational.
 \item\label{it:cyclically_irrotational_properties-iii} For each cyclically irrotational $a=(a_1,\dots,a_n) \in A^n$, we have that
 $$\D_j\big(\D^\star a\big) = L a_j \qquad\text{for $j=1,\dots,n$}$$
 and therefore
 $$(C \circ \D^\star)(a) = \D^\star(L a_1,\dots, L a_n).$$
\end{enumerate} 
\end{lemma}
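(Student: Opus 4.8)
The plan is to follow, step by step, the proof of Lemma~\ref{lem:irrotational_properties}, but with the derivation/divergence pair $(\partial,\partial^\star)$ replaced by the cyclic pair $(\D,\D^\star)$, with the bimodule $A\otimes A$ replaced by $A$ itself, and with the operator $N_2$ on $A\otimes A$ replaced by the weak grading $L=N+\id_A$ on $A$. The three facts I would feed in are: first, that $L$ is a weak grading (Lemma~\ref{lem:divergence_induces_grading}), so that $\partial_i\circ L=L_2\circ\partial_i$ for $i=1,\dots,n$ by Definition~\ref{def:gradedGDQ}~\eqref{it:numop_coderiv}, where $L_2=L\otimes\id_A+\id_A\otimes L$; second, that $L_2$ commutes with the flip $\sigma$, as noted just before Lemma~\ref{lem:irrotational_properties}; third, the defining identity of a compatible cyclic divergence, $\D_j\circ\D_i^\star=\partial_i^\star\circ\sigma\circ\partial_j+\delta_{i,j}\,\id_A$, together with the description $N=\sum_{i=1}^n\partial_i^\star\circ\partial_i$ from Lemma~\ref{lem:divergence_induces_grading}.

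For part~(i), I would assume $a=(a_1,\dots,a_n)$ cyclically irrotational, i.e.\ $\partial_i a_j=\sigma(\partial_j a_i)$ for all $i,j$, and then compute, for each pair $i,j$,
\[
\partial_i(La_j)=L_2(\partial_i a_j)=L_2\big(\sigma(\partial_j a_i)\big)=\sigma\big(L_2(\partial_j a_i)\big)=\sigma\big(\partial_j(La_i)\big),
\]
using $\partial_i\circ L=L_2\circ\partial_i$ twice and the commutation of $L_2$ with $\sigma$ once; this is precisely the statement that $(La_1,\dots,La_n)$ is cyclically irrotational. For part~(ii), running the two outer equalities of the same chain in the other direction shows that cyclic irrotationality of $(La_1,\dots,La_n)$ forces $L_2(\partial_i a_j)=L_2\big(\sigma(\partial_j a_i)\big)$ for all $i,j$; when $L_2$ is injective I may cancel $L_2$ and conclude that $a$ itself is cyclically irrotational.

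For part~(iii), I would expand $\D_j(\D^\star a)=\sum_{i=1}^n\D_j(\D_i^\star a_i)$ and insert the defining identity of the cyclic divergence, obtaining $\D_j(\D^\star a)=a_j+\sum_{i=1}^n\partial_i^\star\big(\sigma(\partial_j a_i)\big)$; cyclic irrotationality then lets me replace $\sigma(\partial_j a_i)$ by $\partial_i a_j$, so that the remaining sum is $\sum_{i=1}^n\partial_i^\star(\partial_i a_j)=Na_j$, whence $\D_j(\D^\star a)=a_j+Na_j=La_j$. Applying $\D_j^\star$ to both sides and summing over $j$ turns the left-hand side into $(\D^\star\circ\D)(\D^\star a)=(C\circ\D^\star)(a)$ and the right-hand side into $\D^\star(La_1,\dots,La_n)$, which is the last displayed identity.

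I expect no real conceptual obstacle here; the argument is essentially bookkeeping, parallel to Lemma~\ref{lem:irrotational_properties}. The only points that require a little care are keeping track of the flip: the condition ``$a$ cyclically irrotational'' may be read either as $\partial_i a_j=\sigma(\partial_j a_i)$ or, applying $\sigma$ and using $\sigma^2=\id_{A\otimes A}$, as $\sigma(\partial_i a_j)=\partial_j a_i$, and one must choose the convenient form at each step; and checking that $L_2$ genuinely commutes with $\sigma$, which is immediate from $L_2=L\otimes\id_A+\id_A\otimes L$.
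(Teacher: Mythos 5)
Your proposal is correct and follows essentially the same argument as the paper's proof: the same chain of equalities via $\partial_i\circ L=L_2\circ\partial_i$ and commutation of $L_2$ with $\sigma$ for parts (i) and (ii), and the same expansion via the defining identity of $\D^\star$ followed by the substitution $\sigma(\partial_j a_i)=\partial_i a_j$ and the identification $\sum_i\partial_i^\star\partial_i=N$ for part (iii). No gaps.
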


\begin{proof}
(i) Since $L$ is a weak grading, we conclude with the help of \eqref{eq:numop_coderiv-L} that for every cyclically irrotational $n$-tuple $a=(a_1,\dots,a_n) \in A^n$
$$\partial_i(L a_j) = L_2(\partial_i a_j) = L_2 \big(\sigma(\partial_j a_i)\big) = \sigma\big( L_2  (\partial_j a_i)\big) = \sigma\big( \partial_j (L a_i)\big)$$
for $i,j=1,\dots,n$, which shows that $(L a_1,\dots,L a_n)$ is cyclically irrotational.

(ii) Let $a=(a_1,\dots,a_n) \in A^n$ be such that $(L a_1,\dots,L a_n)$ is cyclically irrotational. Using \eqref{eq:numop_coderiv-L}, we derive that
$$L_2\big(\sigma(\partial_j a_i)\big) = \sigma\big( L_2 (\partial_j a_i)\big) = \sigma\big( \partial_j (L a_i)\big) = \partial_i(L a_j) = L_2\big(\partial_i a_j\big)$$
for $i,j=1,\dots,n$. Now, since $L_2$ is assumed to be injective, we conclude from the latter that $a$ is cyclically irrotational.

(iii) Let $a=(a_1,\dots,a_n) \in A^n$ be cyclically irrotational. By using the defining property of the cyclic divergence $\D^\star$, we obtain that
$$\D_j \big( \D^\star a \big) = \sum^n_{i=1} \D_j (\D_i^\star a_i) = a_j + \sum^n_{i=1} \partial_i^\star\big(\sigma(\partial_j a_i)\big) =  a_j + \sum^n_{i=1} \partial_i^\star\big(\partial_i a_j) = L a_j$$
for $j=1,\dots,n$, as desired. From this, we may deduce that
$$(C \circ \D^\star)(a) = \sum^n_{j=1} \D_j^\star\big(\D_j ( \D^\star a )\big) = \sum^n_{j=1} \D_j^\star(L a_j) = \D^\star (L a_1,\dots, L a_n),$$
which is the second identity claimed in (iii). 
\end{proof}

\subsection{Universality and nc differential forms}

The theory of noncommutative differential forms and their universality property play a fundamental role in noncommutative geometry and related subjects; see \cite{K83,C85,W89,CQ95,VGB93,Gsch16}, for instance. Here, we restrict ourselves to the first order differential calculus. Our goal is to explore how multivariable GDQ rings fit into this framework.

Let us consider a unital complex algebra $A$. A tuple $(M_0,d_0)$ consisting of an $A$-bimodule $M_0$ and an $M_0$-valued derivation $d_0: A \to M_0$ is said to be \emph{universal} if it has the following universal property: every other derivation $d: A \to M$ with values in some $A$-bimodule $M$ factorizes in a unique way through $M_0$ via $d_0$, i.e., there exists a unique $A$-bimodule homomorphism $\rho: M_0 \to M$ such that $d = \rho \circ d_0$. It is obvious that the universal property characterizes a universal $(M_0,d_0)$ up to isomorphisms of $A$-bimodules; its existence is less clear but the construction is pretty standard. For that purpose, let $\mu: A \otimes A \to A$ be the multiplication map associated to $A$. The $A$-bimodule of \emph{noncommutative $1$-forms} is defined by $\Omega^1(A) := \ker \mu$; if endowed with the \emph{universal derivation} $\delta: A \to \Omega^1(A)$ by $\delta(a) := [a, 1_A \otimes 1_A]$, the tuple $(\Omega^1(A),\delta)$ satisfies the aforementioned universal property; in fact, since $\Omega^1(A)$ is the linear span of $\{a_1 \cdot \delta(a_2) \mid a_1,a_2 \in A\}$, the $A$-bimodule homomorphism $\rho: \Omega^1(A) \to M$ for a derivation $d: A \to M$ with values in some $A$-bimodule $M$ is determined by $\rho(a_1 \cdot \delta(a_2)) = a_1 \cdot d(a_2)$.

Now, let $(A,\mu,\partial)$ be a unital multivariable GDQ ring. Recall that the gradient $\partial: A \to (A \otimes A)^n$ is a derivation. Thus, by universality of $(\Omega^1(A),\delta)$, there exists a unique $A$-bimodule homomorphism $\rho_A: \Omega^1(A) \to (A \otimes A)^n$ such that $\partial = \rho_A \circ \delta$.
We say that $(A,\mu,\partial)$ is \emph{universal} if $((A\otimes A)^n,\partial)$ has the universality property introduced above, or equivalently, if $\rho_A$ is an $A$-bimodule isomorphism.
Here, we will work with some weaker constraint: we say that $(A,\mu,\partial)$ is \emph{proper} if $\rho_A$ admits at least a left inverse, i.e., if we find an $A$-bimodule homomorphism $\lambda: (A \otimes A)^n \to \Omega^1(A)$ with the property that $\lambda \circ \rho_A = \id_{\Omega^1(A)}$; in this case, we obviously have that $\delta = \lambda \circ \partial$.

\begin{lemma}\label{lem:proper}
Let $(A,\mu,\partial)$ be a unital multivariable GDQ ring. Then the following statements are equivalent:
\begin{enumerate}
 \item $(A,\mu,\partial)$ is proper.
 \item For every derivation $d: A \to M$ with values in some $A$-bimodule $M$ one finds elements $m_1,\dots,m_n \in M$ such that $$d(a) = \sum^n_{j=1} (\partial_j a) \sharp m_j \qquad\text{for all $a\in A$}.$$
 \item There are $\omega_1,\dots,\omega_n\in\Omega^1(A)$ such that $$\delta(a) = \sum^n_{j=1} (\partial_j a) \sharp \omega_j \qquad\text{for all $a\in A$}.$$
\end{enumerate}
\end{lemma}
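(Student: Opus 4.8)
The plan is to prove the three statements equivalent by establishing $(i)\Rightarrow(ii)\Rightarrow(iii)\Rightarrow(i)$, exploiting at each step the universal property of $(\Omega^1(A),\delta)$ together with the factorization $\partial=\rho_A\circ\delta$.

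For $(i)\Rightarrow(ii)$, suppose $\lambda:(A\otimes A)^n\to\Omega^1(A)$ is an $A$-bimodule homomorphism with $\lambda\circ\rho_A=\id_{\Omega^1(A)}$, so $\delta=\lambda\circ\partial$. Given any derivation $d:A\to M$, universality yields a unique $A$-bimodule homomorphism $\rho:\Omega^1(A)\to M$ with $d=\rho\circ\delta=\rho\circ\lambda\circ\partial$. Set $T:=\rho\circ\lambda:(A\otimes A)^n\to M$, an $A$-bimodule homomorphism, and put $e_j\in(A\otimes A)^n$ for the tuple with $1_A\otimes 1_A$ in the $j$-th slot and $0$ elsewhere, and $m_j:=T(e_j)\in M$. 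For $a\in A$ we have $\partial a=(\partial_1 a,\dots,\partial_n a)$, and since each $\partial_j a\in A\otimes A$ acts on $e_j$ by $\partial_j a=(\partial_j a)\sharp(1_A\otimes 1_A)$ in the $j$-th slot, the $A$-bimodule property of $T$ gives $d(a)=T(\partial a)=\sum_{j=1}^n T\big((\partial_j a)\sharp e_j\big)=\sum_{j=1}^n(\partial_j a)\sharp T(e_j)=\sum_{j=1}^n(\partial_j a)\sharp m_j$, as desired. Here I am using that any element of $A\otimes A$ is a finite sum $\sum p_1\otimes p_2 = \sum (p_1\otimes p_2)\sharp(1_A\otimes 1_A)$ and that $T$, being a bimodule map, commutes with the $\sharp$-action.

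The implication $(ii)\Rightarrow(iii)$ is immediate: apply the hypothesis of $(ii)$ to the particular derivation $d=\delta:A\to\Omega^1(A)$, obtaining elements $\omega_1,\dots,\omega_n\in\Omega^1(A)$ with $\delta(a)=\sum_{j=1}^n(\partial_j a)\sharp\omega_j$ for all $a\in A$.

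For $(iii)\Rightarrow(i)$, given such $\omega_1,\dots,\omega_n\in\Omega^1(A)\subseteq A\otimes A$, define $\lambda:(A\otimes A)^n\to\Omega^1(A)$ by $\lambda(u_1,\dots,u_n):=\sum_{j=1}^n u_j\sharp\omega_j$. One must check, first, that $\lambda$ indeed lands in $\Omega^1(A)=\ker\mu$ — this holds because $\Omega^1(A)$ is an $A$-sub-bimodule of $A\otimes A$ and the $\sharp$-action of $A\otimes A$ on an $A$-bimodule preserves sub-bimodules, so $u_j\sharp\omega_j\in\Omega^1(A)$; second, that $\lambda$ is an $A$-bimodule homomorphism, which follows from $(b_1\otimes b_2)\sharp\big(a\cdot u\cdot b\big)$-type associativity of the $\sharp$-action together with the fact that left/right multiplication by $A$ on $(A\otimes A)^n$ is entry-wise. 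Finally, $\lambda\circ\rho_A=\id_{\Omega^1(A)}$: since $\Omega^1(A)$ is spanned by elements $a_1\cdot\delta(a_2)$ and $\rho_A$ is the bimodule map with $\rho_A(a_1\cdot\delta(a_2))=a_1\cdot\partial(a_2)$, we compute $\lambda\big(\rho_A(a_1\cdot\delta(a_2))\big)=\lambda\big(a_1\cdot\partial(a_2)\big)=\sum_{j=1}^n (a_1\cdot\partial_j a_2)\sharp\omega_j=a_1\cdot\Big(\sum_{j=1}^n(\partial_j a_2)\sharp\omega_j\Big)=a_1\cdot\delta(a_2)$, using $(iii)$ in the last step and the fact that the $\sharp$-action commutes with left multiplication. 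By linearity this extends to all of $\Omega^1(A)$, so $(A,\mu,\partial)$ is proper.

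I expect no serious obstacle here; the only points requiring a little care are the bookkeeping for the $\sharp$-action — verifying that it commutes with the one-sided $A$-actions on $(A\otimes A)^n$ and on a general bimodule $M$, and that it restricts properly to sub-bimodules such as $\Omega^1(A)$ — and making sure in $(i)\Rightarrow(ii)$ that the composite $\rho\circ\lambda$ really is a bimodule map (a composition of bimodule maps, hence fine). These are all routine consequences of the definitions in Section~\ref{subsec:bimodules}, and the density argument via the spanning set $\{a_1\cdot\delta(a_2)\}$ of $\Omega^1(A)$ reduces every identity to a one-line monomial check.
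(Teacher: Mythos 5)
Your proposal is correct and follows essentially the same route as the paper: (i)$\Rightarrow$(ii) via factoring $d=(\rho\circ\lambda)\circ\partial$ and expressing the bimodule map $\rho\circ\lambda$ through its values $m_j$ on the standard tuples, (ii)$\Rightarrow$(iii) by specializing to $d=\delta$, and (iii)$\Rightarrow$(i) by building $\lambda(u)=\sum_j u_j\sharp\omega_j$ and checking $\lambda\circ\rho_A=\id_{\Omega^1(A)}$ on the spanning set $\{a_1\cdot\delta(a_2)\}$. The extra bookkeeping you supply (that $\sharp$ commutes with bimodule maps and preserves the sub-bimodule $\Omega^1(A)$) is exactly what the paper leaves implicit.
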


\begin{proof}
Assume (i) and consider a derivation $d: A \to M$ with values in some $A$-bimodule $M$. By the universality of $(\Omega^1(A),\delta)$, there is a unique $A$-bimodule homomorphism $\rho: \Omega^1(A) \to M$ such that $d = \rho \circ \delta$. On the other hand, since $(A,\mu,\partial)$ is assumed to be proper, we also have that $\delta = \lambda \circ \partial$ for the left inverse $\lambda$ of $\rho_A$; thus, in summary, $d = (\rho \circ \lambda) \circ \partial$. Next, since $\rho \circ \lambda: (A \otimes A)^n \to M$ is an $A$-bimodule homomorphism, we observe that
$$(\rho \circ \lambda)(u) = \sum^n_{j=1} u_j \sharp m_j \qquad \text{for all $u=(u_1,\dots,u_n) \in (A \otimes A)^n$},$$
where $m_j := (\rho \circ \lambda)(0, \dots, 1 \otimes 1, \dots, 0)$ for the vector $(0, \dots, 1 \otimes 1, \dots, 0) \in (A \otimes A)^n$ which is zero besides the $1 \otimes 1$ in its $j$-th component. Thus, the assertion (ii) follows.

The implication ``(ii) $\Longrightarrow$ (iii)'' is trivial.

In order to prove ``(iii) $\Longrightarrow$ (i)'', we proceed as follows. Assume (iii) and define with the given $\omega_1,\dots,\omega_n\in\Omega^1(A)$ an $A$-bimodule homomorphism $\lambda: (A\otimes A)^n \to M$ by
$$\lambda(u) = \sum^n_{j=1} u_j \sharp \omega_j \qquad \text{for all $u=(u_1,\dots,u_n) \in (A \otimes A)^n$}.$$
By the choice of $\omega_1,\dots,\omega_n$, we easily see that $\lambda \circ \rho_A = \id_{\Omega^1(A)}$, which proves that $(A,\mu,\partial)$ is proper, as asserted in (i). 
\end{proof}

Remarkably, in many relevant cases, the latter result extends to a characterization of universality. 

\begin{lemma}\label{lem:universal}
Let $(A,\mu,\partial)$ be a unital multivariable GDQ ring. Suppose that there are elements $a_1,\dots,a_n\in A$ such that $\partial_j a_i = \delta_{i,j} 1_A \otimes 1_A$ for $i,j=1,\dots,n$. Then $(A,\mu,\partial)$ is universal if and only if it is proper.
\end{lemma}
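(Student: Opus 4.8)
The plan is to prove the nontrivial direction, namely that properness implies universality; the converse is immediate since a left-invertible $A$-bimodule homomorphism that is part of an isomorphism is of course left invertible, so it suffices to show that under the hypothesis $\partial_j a_i = \delta_{i,j} 1_A \otimes 1_A$, a left inverse $\lambda$ of $\rho_A$ is automatically a two-sided inverse. By Lemma~\ref{lem:proper}, properness gives us $\omega_1,\dots,\omega_n \in \Omega^1(A)$ with $\delta(a) = \sum_{j=1}^n (\partial_j a)\sharp \omega_j$ for all $a \in A$, and the corresponding left inverse is $\lambda(u) = \sum_{j=1}^n u_j \sharp \omega_j$. So the task is to show $\rho_A \circ \lambda = \id_{(A\otimes A)^n}$.

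First I would compute $\rho_A$ on the distinguished generators. Since $\partial = \rho_A \circ \delta$ and $\delta(a_i) = [a_i, 1_A \otimes 1_A]$, applying $\rho_A$ gives $\rho_A(\delta(a_i)) = \partial a_i$, whose $j$-th component is $\partial_j a_i = \delta_{i,j}\, 1_A \otimes 1_A$. In other words, $\rho_A(\delta(a_i)) = e_i$, the tuple with $1_A \otimes 1_A$ in slot $i$ and zero elsewhere. Since $\rho_A$ is an $A$-bimodule homomorphism and $\sharp$ distributes over the bimodule action, we get $\rho_A(b_1 \cdot \delta(a_i) \cdot b_2) = (b_1 \otimes b_2)\sharp e_i$, and more generally $\rho_A\big(\sum_i v_i \sharp \delta(a_i)\big) = (v_1,\dots,v_n)$ for any $v_i \in A \otimes A$, where here I am using $v \sharp \delta(a_i)$ to denote the bimodule-weighted sum $\sum_k b_1^{(k)} \cdot \delta(a_i) \cdot b_2^{(k)}$ when $v = \sum_k b_1^{(k)} \otimes b_2^{(k)}$. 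Hence $\rho_A$ is surjective, and its restriction to the sub-bimodule generated by the $\delta(a_i)$ already hits everything.

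Next I would chase an arbitrary $u = (u_1,\dots,u_n) \in (A\otimes A)^n$ through $\rho_A \circ \lambda$. We have $\lambda(u) = \sum_{j=1}^n u_j \sharp \omega_j \in \Omega^1(A)$, and I want $\rho_A(\lambda(u)) = u$. The cleanest route is to use that $\rho_A$ restricted to the image of $\lambda$ can be read off once we know $\rho_A$ on $\omega_j$: writing $\rho_A(\omega_j) = (c_{j,1},\dots,c_{j,n}) \in (A\otimes A)^n$ and using that $\rho_A$ is a bimodule map, $\rho_A(u_j \sharp \omega_j)$ has $i$-th component $u_j \sharp c_{j,i}$ — but here $c_{j,i} \in A \otimes A$ and $u_j \in A \otimes A$, so one must be careful about what $\sharp$ means; it is better to avoid this and instead argue as follows. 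Apply the defining relation $\delta(a) = \sum_j (\partial_j a)\sharp \omega_j$ together with $\partial = \rho_A \circ \delta$ to the elements $a_i$: the relation $\delta(a_i) = \sum_j (\partial_j a_i)\sharp \omega_j = \sum_j (\delta_{i,j} 1_A \otimes 1_A)\sharp \omega_j = \omega_i$, so in fact $\omega_i = \delta(a_i)$. Consequently $\lambda(u) = \sum_j u_j \sharp \delta(a_j)$, and by the computation of the previous paragraph $\rho_A(\lambda(u)) = (u_1,\dots,u_n) = u$. Thus $\rho_A \circ \lambda = \id$, and combined with $\lambda \circ \rho_A = \id_{\Omega^1(A)}$ from properness, $\rho_A$ is an $A$-bimodule isomorphism, i.e.\ $(A,\mu,\partial)$ is universal.

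The main obstacle I anticipate is the bookkeeping around the $\sharp$-action: one needs the identity $\rho_A(v \sharp \delta(a)) = v \sharp \rho_A(\delta(a))$ interpreted correctly in $(A\otimes A)^n$ viewed as an $A$-bimodule (where $\sharp$ with $v \in A\otimes A$ acts componentwise via the bimodule structure, exactly as set up in Section~\ref{subsec:bimodules}), and the verification that $\Omega^1(A)$ is spanned by elements of the form $v \sharp \delta(a_i)$ — this last point is where the hypothesis $\partial_j a_i = \delta_{i,j} 1_A\otimes 1_A$ is doing real work, since it forces $\omega_i = \delta(a_i)$ and hence forces the left inverse $\lambda$ to be the obvious candidate, pinning down $\rho_A$ completely. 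Once the observation $\omega_i = \delta(a_i)$ is made, the rest is a short formal computation; without it, one would only know $\rho_A$ is injective (from $\lambda\circ\rho_A = \id$) but not surjective, and establishing surjectivity is precisely what the distinguished elements $a_i$ buy us.
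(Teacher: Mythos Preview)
Your proof is correct and follows essentially the same route as the paper: identify $\omega_i = \delta(a_i)$ by plugging $a = a_i$ into the properness relation, then compute $\rho_A(\lambda(u)) = \sum_j u_j \sharp \rho_A(\delta(a_j)) = \sum_j u_j \sharp \partial a_j = u$. The only remark is that your anticipated obstacle about $\Omega^1(A)$ being spanned by the $v \sharp \delta(a_i)$ is a red herring --- you never use it, and indeed it is a \emph{consequence} of $\lambda$ being a two-sided inverse, not an ingredient.
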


\begin{proof}
It suffices to prove that $(A,\mu,\partial)$ is universal if it is proper. For that purpose, consider the $A$-bimodule homomorphism $\rho_A: \Omega^1(A) \to (A \otimes A)^n$; we will show that $\rho_A$ admits an inverse. By (the proof of) Lemma \ref{lem:proper}, we know that there are $\omega_1,\dots,\omega_n\in \Omega^1(A)$ such that $\delta(a) = \sum^n_{j=1} (\partial_j a) \sharp \omega_j$, and we have deduced that the $A$-bimodule homomorphism $\lambda: (A\otimes A)^n \to M$ defined by
$$\lambda(u) = \sum^n_{j=1} u_j \sharp \omega_j \qquad \text{for all $u=(u_1,\dots,u_n) \in (A \otimes A)^n$}$$
satisfies $\lambda \circ \rho_A = \id_{\Omega^1(A)}$; we will prove that $\rho_A \circ \lambda = \id_{(A\otimes A)^n}$. In order to do so, we first notice that $\omega_j = \delta(a_j)$ for $j=1,\dots,n$ by choice of $a_1,\dots,a_n$; hence, we get for every $u=(u_1,\dots,u_n) \in (A\otimes A)^n$ that
$$\rho_A(\lambda(u)) = \sum^n_{j=1} u_j \sharp \rho_A(\delta(a_j)) = \sum^n_{j=1} u_j \sharp (\partial a_j) = u.$$
This verifies that $\rho_A$ is an $A$-bimodule isomorphism (with inverse $\lambda$).
\end{proof}

\section{THE CHARACTERIZATION OF GRADIENTS}\label{sec:gradients}

We are now prepared for stating and proving analogues of the Theorems \ref{thm:characterization_cyclic_gradients} and \ref{thm:characterization_free_gradients} in the generality of multivariable GDQ rings.

\subsection{The cyclic gradient case}

We begin with the characterization of cyclic gradients.

\begin{theorem}\label{thm:characterization_cyclic_gradients_GDQ}
Let $(A,\mu,\partial)$ be a multivariable GDQ ring. Suppose that the following conditions are satisfied:
\begin{itemize}
 \item There exists a divergence $\partial^\star=(\partial^\star_1,\dots,\partial^\star_n)$ for $(A,\mu,\partial)$ in the sense of Definition \ref{def:GDQring_divergence} which consists of $A$-bimodule homomorphisms; denote by $L$ and $N$ the grading and the associated number operator, respectively, which are induced by $\partial^\star$ as explained in Lemma \ref{lem:divergence_induces_grading}.
 \item There exists a cyclic divergence $\D^\star=(\D_1^\star,\dots,\D_n^\star)$ compatible with $\partial^\star$ in the sense of Definition \ref{def:GDQring_cyclic_divergence}.
 \item The grading $L: A \to A$ is injective and we have for the ranges of $\D^\star: A^n \to A$ and $N: A \to A$ that $\ran \D^\star \subseteq \ran N$. 
\end{itemize}
Then, for any given $n$-tuple $a=(a_1,\dots,a_n) \in A^n$, the following statements are equivalent:
\begin{enumerate}
 \item $a$ is a cyclic gradient, i.e., there exists $b\in A$ such that $\D b = a$.
 \item $a$ is cyclically irrotational.
 \item\label{it:characterization_cyclic_gradients_GDQ_divergence} For $j=1,\dots,n$, it holds true that $\D_j \big( \D^\star a \big) = L a_j$.
\end{enumerate}
\end{theorem}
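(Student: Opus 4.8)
The plan is to establish the cycle of implications (i) $\Rightarrow$ (ii) $\Rightarrow$ (iii) $\Rightarrow$ (i), using the commutation relations and the properties of (cyclically) irrotational tuples developed earlier. For (i) $\Rightarrow$ (ii): if $a = \D b$, then for $i,j=1,\dots,n$ we must verify $\partial_i a_j = \sigma(\partial_j a_i)$, i.e.\ $\partial_i \D_j b = \sigma(\partial_j \D_i b)$. This is precisely the content of Lemma \ref{lem:GDQring_coass_cyclic}, which gives $\sigma \circ \partial_i \circ \D_j = \partial_j \circ \D_i$; applying $\sigma$ to both sides and using $\sigma^2 = \id$ yields $\partial_i \circ \D_j = \sigma \circ \partial_j \circ \D_i$, so cyclic irrotationality of $a = \D b$ follows immediately.

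For (ii) $\Rightarrow$ (iii): if $a$ is cyclically irrotational, then Lemma \ref{lem:cyclically_irrotational_properties}\eqref{it:cyclically_irrotational_properties-iii} directly gives $\D_j(\D^\star a) = L a_j$ for $j=1,\dots,n$. So this implication is essentially a citation.

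The substantive implication is (iii) $\Rightarrow$ (i), and this is where the three extra hypotheses (injectivity of $L$, $\ran \D^\star \subseteq \ran N$) come into play; I expect this to be the main obstacle. The idea is: given $a$ satisfying $\D_j(\D^\star a) = L a_j$, set $c := \D^\star a \in A$. Since $\ran \D^\star \subseteq \ran N$, choose $b \in A$ with $N b = c = \D^\star a$. I claim $\D b = a$. To see this, compute $L \D_j b = \D_j(N b)$ by the commutation relation \eqref{eq:number_operator_cyclic_derivative} of Lemma \ref{lem:number_operator_cyclic_derivative} (which applies because $L$ is a genuine grading here, the divergence consisting of bimodule homomorphisms); actually one must be slightly careful which direction the commutation goes — Lemma \ref{lem:number_operator_cyclic_derivative} gives $\D_j \circ N = L \circ \D_j$, so $\D_j(Nb) = L(\D_j b)$. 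On the other hand $\D_j(Nb) = \D_j(\D^\star a) = L a_j$ by hypothesis (iii). Hence $L(\D_j b) = L a_j$ for each $j$, and since $L$ is injective we conclude $\D_j b = a_j$, i.e.\ $\D b = a$. This also shows that the last sentence's recipe (solve $N b = \D^\star a$) indeed produces an antiderivative.

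One point deserving care: in invoking Lemma \ref{lem:number_operator_cyclic_derivative} one needs $L$ to be a genuine grading, i.e.\ $N$ to be a derivation, which by Lemma \ref{lem:divergence_induces_grading} is guaranteed precisely by the assumption that each $\partial_j^\star$ is an $A$-bimodule homomorphism; so the hypotheses are exactly calibrated for this argument. The only real risk in the write-up is getting a direction of a commutation relation backwards, but each needed relation is stated explicitly in the excerpt. I would present the proof as the three short implications above, with (iii) $\Rightarrow$ (i) spelled out as the paragraph-long computation just sketched, and then remark that it simultaneously verifies the closing claim of the theorem about solving $Nb = \D^\star a$.
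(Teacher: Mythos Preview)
Your proof is correct and matches the paper's argument essentially step for step: (i)$\Rightarrow$(ii) via Lemma~\ref{lem:GDQring_coass_cyclic}, (ii)$\Rightarrow$(iii) via Lemma~\ref{lem:cyclically_irrotational_properties}\eqref{it:cyclically_irrotational_properties-iii}, and (iii)$\Rightarrow$(i) by choosing $b$ with $Nb=\D^\star a$ and using $\D_j\circ N = L\circ\D_j$ together with injectivity of $L$. Your remark that the bimodule-homomorphism hypothesis on $\partial^\star$ is exactly what makes $L$ a genuine grading (so that Lemma~\ref{lem:number_operator_cyclic_derivative} applies) is also on point.
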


\begin{proof}
That ``(i) $\Longrightarrow$ (ii)'' holds is the content of Lemma \ref{lem:GDQring_coass_cyclic}.
The validity of the implication ``(ii) $\Longrightarrow$ (iii)'' follows from Lemma \ref{lem:cyclically_irrotational_properties} Item \eqref{it:cyclically_irrotational_properties-iii}.
Finally, ``(iii) $\Longrightarrow$ (i)'' can be shown as follows. Since $\ran \D^\star \subseteq \ran N$ by assumption, we find an element $b\in A$ such that $N b = \D^\star a$. By using the commutation relation \eqref{eq:number_operator_cyclic_derivative} provided by Lemma \ref{lem:number_operator_cyclic_derivative} and the assumption (iii), we get that
$$L (\D_j b) = \D_j (N b) = \D_j (\D^\star a) = L a_j \qquad\text{for $j=1,\dots,n$};$$
by the injectivity of $L$, we conclude that $\D_j b = a_j$ for $j=1,\dots,n$, which is (i).
\end{proof}

While in the proof of Theorem \ref{thm:characterization_cyclic_gradients_GDQ} the element $b$ with the property $\D b = a$ is found as a solution of the equation $N b = \D^\star a$, Lemma \ref{lem:symmetrization_operator_cyclic_derivative} suggests the following alternative.

\begin{lemma}\label{lem:characterization_cyclic_gradients_GDQ}
In the situation of Theorem \ref{thm:characterization_cyclic_gradients_GDQ}, suppose that $a=(a_1,\dots,a_n) \in A^n$ satisfies the equivalent conditions formulated in that theorem. Then every $b\in A$ which satisfies $C b = \D^\star a$ has the property that $\D b = a$.
\end{lemma}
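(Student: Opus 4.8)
The plan is to mimic the final step of the proof of Theorem~\ref{thm:characterization_cyclic_gradients_GDQ}, but with the cyclic symmetrization operator $C$ playing the role of the number operator $N$. So suppose $a=(a_1,\dots,a_n)\in A^n$ satisfies the equivalent conditions of Theorem~\ref{thm:characterization_cyclic_gradients_GDQ}, and let $b\in A$ be any element with $Cb = \D^\star a$. (Such a $b$ need not exist in general, but the statement is conditional: \emph{if} $b$ solves this equation, \emph{then} $\D b = a$.) We want to conclude $\D_j b = a_j$ for each $j=1,\dots,n$.

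The key computation is to apply $\D_j$ to both sides of $Cb = \D^\star a$. On the left, Lemma~\ref{lem:symmetrization_operator_cyclic_derivative} gives the commutation relation $\D_j \circ C = L \circ \D_j$, so $\D_j(Cb) = L(\D_j b)$. On the right, since $a$ satisfies the equivalent conditions of Theorem~\ref{thm:characterization_cyclic_gradients_GDQ} — in particular condition (iii) of that theorem, $\D_j(\D^\star a) = L a_j$ — we get $\D_j(\D^\star a) = L a_j$. Combining these, $L(\D_j b) = L a_j$ for all $j=1,\dots,n$. Since the grading $L$ is assumed injective in the hypotheses of Theorem~\ref{thm:characterization_cyclic_gradients_GDQ}, we conclude $\D_j b = a_j$ for each $j$, that is, $\D b = a$, as claimed.

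I do not anticipate a serious obstacle here: the proof is essentially a two-line application of the already-established commutation relation (Lemma~\ref{lem:symmetrization_operator_cyclic_derivative}) together with the injectivity of $L$ and condition (iii) of Theorem~\ref{thm:characterization_cyclic_gradients_GDQ}. The one point worth being careful about is that we must invoke condition (iii) of the theorem rather than re-derive it: we are entitled to do so precisely because we assume $a$ satisfies the equivalent conditions of Theorem~\ref{thm:characterization_cyclic_gradients_GDQ}, so all three conditions (i)–(iii) there hold for $a$. It is also worth noting, for the reader, the structural parallel: in Theorem~\ref{thm:characterization_cyclic_gradients_GDQ} the antiderivative $b$ is recovered by solving $Nb = \D^\star a$ and then using $\D_j\circ N = L\circ\D_j$, whereas here one solves $Cb=\D^\star a$ and uses $\D_j\circ C = L\circ\D_j$; the two arguments are formally identical, only the operator on the left of the intertwining relation changes.
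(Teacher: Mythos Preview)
Your proof is correct and matches the paper's own proof essentially line for line: apply $\D_j$ to $Cb=\D^\star a$, use Lemma~\ref{lem:symmetrization_operator_cyclic_derivative} on the left and condition~(iii) of Theorem~\ref{thm:characterization_cyclic_gradients_GDQ} on the right, then cancel by injectivity of $L$. Your remark on the structural parallel with the $N$-based argument is also exactly the point the paper is making.
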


\begin{proof}
Using Lemma \ref{lem:symmetrization_operator_cyclic_derivative} and Item \eqref{it:characterization_cyclic_gradients_GDQ_divergence} of Theorem \ref{thm:characterization_cyclic_gradients_GDQ}, we get that
$$L(\D_j b) = \D_j(C b) = \D_j(\D^\star a) = L a_j$$
and hence, by the injectivity of $L$, that $\D_j b = a_j$ for $j=1,\dots,n$, as asserted.  
\end{proof}

The cyclic symmetrization operator can be used to give the following two variants of Theorem \ref{thm:characterization_cyclic_gradients_GDQ}.

\begin{theorem}\label{thm:characterization_cyclic_gradients_GDQ_v2}
Let $(A,\mu,\partial)$ be a multivariable GDQ ring. Suppose that the following conditions are satisfied:
\begin{itemize}
 \item There exists a divergence $\partial^\star=(\partial^\star_1,\dots,\partial^\star_n)$ for $(A,\mu,\partial)$ in the sense of Definition \ref{def:GDQring_divergence}; denote by $L$ and $N$ the weak grading and the associated number operator, respectively, which are induced by $\partial^\star$ as explained in Lemma \ref{lem:divergence_induces_grading}.
 \item There exists a cyclic divergence $\D^\star=(\D_1^\star,\dots,\D_n^\star)$ compatible with $\partial^\star$ in the sense of Definition \ref{def:GDQring_cyclic_divergence}; denote by $C$ the associated cyclic symmetrization operator.
 \item The weak grading $L: A \to A$ is injective and we have for the ranges of $\D^\star: A^n \to A$ and $C: A \to A$ that $\ran \D^\star \subseteq \ran C$. 
\end{itemize}
Then, for any given $n$-tuple $a=(a_1,\dots,a_n) \in A^n$, the following statements are equivalent:
\begin{enumerate}
 \item $a$ is a cyclic gradient.
 \item $a$ is cyclically irrotational.
 \item For $j=1,\dots,n$, it holds true that $\D_j \big( \D^\star a \big) = L a_j$.
\end{enumerate}
\end{theorem}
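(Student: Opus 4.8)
The plan is to adapt the proof of Theorem \ref{thm:characterization_cyclic_gradients_GDQ} almost verbatim, replacing the number operator $N$ by the cyclic symmetrization operator $C$ wherever the ``integration step'' is performed. The implications ``(i) $\Longrightarrow$ (ii)'' and ``(ii) $\Longrightarrow$ (iii)'' require no change at all: the first is precisely Lemma \ref{lem:GDQring_coass_cyclic}, and the second is Item \eqref{it:cyclically_irrotational_properties-iii} of Lemma \ref{lem:cyclically_irrotational_properties}, both of which hold for a general multivariable GDQ ring with divergence $\partial^\star$ and compatible cyclic divergence $\D^\star$ — in particular, neither requires $\partial^\star$ to consist of $A$-bimodule homomorphisms, so the weakening of the hypothesis here is harmless.

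The only implication that needs genuine attention is ``(iii) $\Longrightarrow$ (i)''. Here, instead of solving $N b = \D^\star a$, I would use the new range assumption $\ran \D^\star \subseteq \ran C$ to pick $b \in A$ with $C b = \D^\star a$. Then I apply the commutation relation \eqref{eq:symmetrization_operator_cyclic_derivative} from Lemma \ref{lem:symmetrization_operator_cyclic_derivative}, which says $\D_j \circ C = L \circ \D_j$ and which — crucially — was proved only assuming that $L$ is the induced \emph{weak} grading, not a grading. Combining this with hypothesis (iii) gives
\begin{equation*}
L(\D_j b) = \D_j(C b) = \D_j(\D^\star a) = L a_j \qquad\text{for } j=1,\dots,n,
\end{equation*}
and the injectivity of $L$ forces $\D_j b = a_j$, i.e. $\D b = a$. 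This is exactly the argument of Lemma \ref{lem:characterization_cyclic_gradients_GDQ}, now used as the core of the existence proof rather than as an afterthought.

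I do not expect any serious obstacle: the whole point is that Lemma \ref{lem:symmetrization_operator_cyclic_derivative} was stated at precisely the level of generality (divergence, weak grading, compatible cyclic divergence) needed to make $C$ play the role $N$ played before, so the substitution goes through mechanically. The one thing to be careful about is to verify that every lemma invoked indeed tolerates the relaxed hypotheses of this version — namely that $\partial^\star$ need not consist of bimodule homomorphisms and that $L$ need only be a weak grading — which is the case for Lemmas \ref{lem:GDQring_coass_cyclic}, \ref{lem:symmetrization_operator_cyclic_derivative}, and \ref{lem:cyclically_irrotational_properties}. Thus the proof is essentially a three-line cycle of implications, with the substantive content already isolated in the earlier lemmas.
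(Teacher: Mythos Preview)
Your proposal is correct and matches the paper's own proof essentially line for line: the implications ``(i) $\Longrightarrow$ (ii)'' and ``(ii) $\Longrightarrow$ (iii)'' are taken unchanged from Theorem \ref{thm:characterization_cyclic_gradients_GDQ}, and ``(iii) $\Longrightarrow$ (i)'' is handled exactly as you describe, by solving $C b = \D^\star a$ and invoking Lemma \ref{lem:symmetrization_operator_cyclic_derivative} together with the injectivity of $L$. Your remark that all the cited lemmas require only a weak grading (and no bimodule-homomorphism condition on $\partial^\star$) is also correct and is precisely why this variant goes through.
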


\begin{proof}
The implications ``(i) $\Longrightarrow$ (ii)'' and ``(ii) $\Longrightarrow$ (iii)'' are both proven in exactly the same way as for Theorem \ref{thm:characterization_cyclic_gradients_GDQ}; the proof of the implication ``(iii) $\Longrightarrow$ (i)'' is analogous to Theorem \ref{thm:characterization_cyclic_gradients_GDQ} but follows the lines of Lemma \ref{lem:characterization_cyclic_gradients_GDQ}.
Indeed, since $\ran \D^\star \subseteq \ran C$ by assumption, we find an element $b\in A$ such that $C b = \D^\star a$. By using the commutation relation \eqref{eq:symmetrization_operator_cyclic_derivative} provided by Lemma \ref{lem:symmetrization_operator_cyclic_derivative} and the assumption (iii), we get that
$$L (\D_j b) = \D_j (C b) = \D_j (\D^\star a) = L a_j \qquad\text{for $j=1,\dots,n$};$$
by the injectivity of $L$, we conclude that $\D_j b = a_j$ for $j=1,\dots,n$, which is (i).
\end{proof}

\begin{theorem}\label{thm:characterization_cyclic_gradients_GDQ_v3}
Let $(A,\mu,\partial)$ be a multivariable GDQ ring. Suppose that the following conditions are satisfied:
\begin{itemize}
 \item There exists a divergence $\partial^\star=(\partial^\star_1,\dots,\partial^\star_n)$ for $(A,\mu,\partial)$ in the sense of Definition \ref{def:GDQring_divergence}; denote by $L$ and $N$ the weak grading and the associated number operator, respectively, which are induced by $\partial^\star$ as explained in Lemma \ref{lem:divergence_induces_grading}.
 \item There exists a cyclic divergence $\D^\star=(\D_1^\star,\dots,\D_n^\star)$ compatible with $\partial^\star$ in the sense of Definition \ref{def:GDQring_cyclic_divergence}; denote by $C$ the associated cyclic symmetrization operator.
 \item The weak grading $L$ is invertible and $L_2$ is injective. 
\end{itemize}
Then, for any given $n$-tuple $a=(a_1,\dots,a_n) \in A^n$, the following statements are equivalent:
\begin{enumerate}
 \item $a$ is a cyclic gradient.
 \item $a$ is cyclically irrotational.
 \item We have that $\D^\star a \in \ran C$ and $\D_j \big( \D^\star a \big) = L a_j$ for $j=1,\dots,n$.
\end{enumerate}
\end{theorem}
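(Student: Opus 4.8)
The plan is to prove the cyclic chain of implications ``(i) $\Longrightarrow$ (ii)'', ``(ii) $\Longrightarrow$ (iii)'', and ``(iii) $\Longrightarrow$ (i)'', reusing essentially all of the machinery already assembled for Theorem \ref{thm:characterization_cyclic_gradients_GDQ} and Lemma \ref{lem:characterization_cyclic_gradients_GDQ}; the new feature is that the invertibility of $L$ and the injectivity of $L_2$ take over the role played there by the range hypothesis on $\D^\star$.

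The implication ``(i) $\Longrightarrow$ (ii)'' is again just Lemma \ref{lem:GDQring_coass_cyclic}: if $a = \D b$, then $\partial_i a_j = \partial_i \D_j b = \sigma(\partial_j \D_i b) = \sigma(\partial_j a_i)$ for all $i,j = 1,\dots,n$. For ``(iii) $\Longrightarrow$ (i)'' I would run the argument of Lemma \ref{lem:characterization_cyclic_gradients_GDQ} verbatim: since $\D^\star a \in \ran C$, pick $b \in A$ with $C b = \D^\star a$; then the commutation relation \eqref{eq:symmetrization_operator_cyclic_derivative} together with the identity $\D_j(\D^\star a) = L a_j$ from (iii) gives $L(\D_j b) = \D_j(C b) = \D_j(\D^\star a) = L a_j$ for each $j = 1,\dots,n$, and the injectivity of the invertible map $L$ forces $\D_j b = a_j$, i.e., $\D b = a$.

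The substantive step is ``(ii) $\Longrightarrow$ (iii)''. The identity $\D_j(\D^\star a) = L a_j$ is exactly Lemma \ref{lem:cyclically_irrotational_properties} Item \eqref{it:cyclically_irrotational_properties-iii} applied to $a$, so what remains is to produce the membership $\D^\star a \in \ran C$. This is where the invertibility of $L$ enters: set $a' := (L^{-1} a_1, \dots, L^{-1} a_n) \in A^n$, so that $(L a'_1, \dots, L a'_n) = a$ is cyclically irrotational by hypothesis. Since $L_2$ is injective, Lemma \ref{lem:cyclically_irrotational_properties} Item \eqref{it:cyclically_irrotational_properties-ii} then shows that $a'$ is itself cyclically irrotational, and applying Lemma \ref{lem:cyclically_irrotational_properties} Item \eqref{it:cyclically_irrotational_properties-iii} to $a'$ yields $C(\D^\star a') = (C \circ \D^\star)(a') = \D^\star(L a'_1, \dots, L a'_n) = \D^\star a$. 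Hence $\D^\star a \in \ran C$, which completes the implication.

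I expect this last point to be the main obstacle. In Theorem \ref{thm:characterization_cyclic_gradients_GDQ} the membership of $\D^\star a$ in $\ran N$ (or $\ran C$) is postulated outright, whereas here it must be manufactured, and the key observation is that cyclic irrotationality is preserved when the tuple $a$ is rescaled by $L^{-1}$, a fact that is available precisely because $L_2$ is injective, via Lemma \ref{lem:cyclically_irrotational_properties} Item \eqref{it:cyclically_irrotational_properties-ii}. Once this is in hand, the remaining verifications are transcriptions of arguments already carried out above.
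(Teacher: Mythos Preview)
Your proposal is correct and follows essentially the same approach as the paper's own proof: the cyclic chain ``(i) $\Longrightarrow$ (ii) $\Longrightarrow$ (iii) $\Longrightarrow$ (i)'' is established by the same lemmas in the same order, and in particular your handling of the key step ``(ii) $\Longrightarrow$ (iii)'' --- defining $a' = (L^{-1}a_1,\dots,L^{-1}a_n)$, invoking Lemma \ref{lem:cyclically_irrotational_properties} Item \eqref{it:cyclically_irrotational_properties-ii} via injectivity of $L_2$ to see that $a'$ is cyclically irrotational, and then applying Item \eqref{it:cyclically_irrotational_properties-iii} to $a'$ to exhibit $\D^\star a = C(\D^\star a')$ --- is exactly what the paper does.
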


\begin{proof}
Like for the previously stated theorems, the implication ``(i) $\Longrightarrow$ (ii)'' is true tanks to Lemma \ref{lem:GDQring_coass_cyclic}.

Next, we suppose that $a=(a_1,\dots,a_n)$ is cyclically irrotational. By Lemma \ref{lem:cyclically_irrotational_properties} Item \eqref{it:cyclically_irrotational_properties-iii}, we conclude that $\D_j \big( \D^\star a \big) = L a_j$ for $j=1,\dots,n$. On the other hand, Lemma \ref{lem:cyclically_irrotational_properties} Item \eqref{it:cyclically_irrotational_properties-ii} guarantees that $a' = (a_1',\dots,a_n')$ with $a_j' := L^{-1} a_j$ for $j=1,\dots,n$ is cyclically irrotational as well; thus, we may apply Lemma \ref{lem:cyclically_irrotational_properties} Item \eqref{it:cyclically_irrotational_properties-iii} to $a'$ instead of $a$, which yields $\D^\star a = \D^\star(L a_1', \dots, La_n') = C( \D^\star a') \in \ran C$. In summary, this verifies (iii). Hence, the implication ``(ii) $\Longrightarrow$ (iii)'' is shown.

Finally, we note that the implication ``(iii) $\Longrightarrow$ (i)'' is proven precisely like in Theorem \ref{thm:characterization_cyclic_gradients_GDQ_v2}; see also the proof of Lemma \ref{lem:characterization_cyclic_gradients_GDQ}. 
\end{proof}

\subsection{The free gradient case}

Next, we address the free gradient itself.

\begin{theorem}\label{thm:characterization_free_gradients_GDQ}
Let $(A,\mu,\partial)$ be a multivariable GDQ ring. Suppose that the following conditions are satisfied:
\begin{itemize}
 \item There exists a divergence $\partial^\star=(\partial^\star_1,\dots,\partial^\star_n)$ for $(A,\mu,\partial)$ in the sense of Definition \ref{def:GDQring_divergence}; denote by $L$ and $N$ the weak grading and the associated number operator, respectively, which are induced by $\partial^\star$ as explained in Lemma \ref{lem:divergence_induces_grading}.
 \item The operator $N_2$ is injective and we have for the ranges of $\partial^\star: (A \otimes A)^n \to A$ and $N: A \to A$ that $\ran \partial^\star \subseteq \ran N$. 
\end{itemize}
Then, for any $n$-tuple $u=(u_1,\dots,u_n)\in (A \otimes A)^n$, the following statements are equivalent:
\begin{enumerate}
 \item $u$ is a free gradient, i.e., there exists $a\in A$ such that $\partial a = u$.
 \item $u$ is irrotational.
 \item For $j=1,\dots,n$, it holds true that $\partial_j \big( \partial^\star u \big) = N_2 u_j$.
\end{enumerate}
\end{theorem}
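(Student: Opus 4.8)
The plan is to prove the cycle of implications (i) $\Longrightarrow$ (ii) $\Longrightarrow$ (iii) $\Longrightarrow$ (i), following the pattern of the proof of Theorem \ref{thm:characterization_cyclic_gradients_GDQ} but working with the free gradient $\partial$ in place of the cyclic gradient $\D$; accordingly, the role of Lemma \ref{lem:GDQring_coass_cyclic} is played by the joint coassociativity relation \eqref{eq:GDQring_coass}, the role of Lemma \ref{lem:cyclically_irrotational_properties} by Lemma \ref{lem:irrotational_properties}, and the role of Lemma \ref{lem:number_operator_cyclic_derivative} by the coderivation property \eqref{eq:numop_coderiv-N} of the weak grading.

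For ``(i) $\Longrightarrow$ (ii)'', I would just note that if $u = \partial a$, then $u_j = \partial_j a$ and $u_i = \partial_i a$, whence
\[
(\id_A \otimes \partial_i)(u_j) = \big((\id_A \otimes \partial_i) \circ \partial_j\big)(a) = \big((\partial_j \otimes \id_A) \circ \partial_i\big)(a) = (\partial_j \otimes \id_A)(u_i)
\]
for all $i,j = 1,\dots,n$, the middle equality being \eqref{eq:GDQring_coass} with the indices relabelled; so $u$ is irrotational. For ``(ii) $\Longrightarrow$ (iii)'' nothing is left to do, since this is exactly the first identity of Lemma \ref{lem:irrotational_properties} Item \eqref{it:irrotational_properties-iii}.

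For ``(iii) $\Longrightarrow$ (i)'' I would argue as in the final step of Theorem \ref{thm:characterization_cyclic_gradients_GDQ}: since $\ran \partial^\star \subseteq \ran N$, pick $a \in A$ with $N a = \partial^\star u$; the coderivation property \eqref{eq:numop_coderiv-N} of $L$ says precisely that $\partial_j \circ N = N_2 \circ \partial_j$, so that, invoking (iii) at the end,
\[
N_2(\partial_j a) = \partial_j(N a) = \partial_j(\partial^\star u) = N_2 u_j \qquad \text{for } j = 1,\dots,n,
\]
and the injectivity of $N_2$ then forces $\partial_j a = u_j$ for all $j$, i.e., $\partial a = u$. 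I do not expect any genuine obstacle; the only points deserving care are the innocuous relabelling of indices in \eqref{eq:GDQring_coass}, and the fact that in the free setting injectivity is imposed on $N_2$ (the operator that controls the action of $\partial$ on $A \otimes A$, via $\partial_j \circ N = N_2 \circ \partial_j$) rather than on $N$ itself --- which is exactly what lets us cancel $N_2$ in the displayed chain, just as injectivity of $L$ was used in the cyclic theorems, with the hypothesis $\ran \partial^\star \subseteq \ran N$ supplying the antiderivative $a$ as before.
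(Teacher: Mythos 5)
Your proposal is correct and follows essentially the same route as the paper: (i) $\Rightarrow$ (ii) via the joint coassociativity relation \eqref{eq:GDQring_coass}, (ii) $\Rightarrow$ (iii) via Lemma \ref{lem:irrotational_properties} Item \eqref{it:irrotational_properties-iii}, and (iii) $\Rightarrow$ (i) by choosing $a$ with $N a = \partial^\star u$ and cancelling the injective operator $N_2$ using the coderivation identity \eqref{eq:numop_coderiv-N}. No gaps.
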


\begin{proof}
The implication ``(i) $\Longrightarrow$ (ii)'' follows from the coassociativity relation \eqref{eq:GDQring_coass} satisfied by $\partial$ according to Definition \ref{def:GDQring}. The validity of ``(ii) $\Longrightarrow$ (iii)'' is the content of Lemma \ref{lem:irrotational_properties} Item \eqref{it:irrotational_properties-iii}.
In order to prove that ``(iii) $\Longrightarrow$ (i)'', we proceed as follows. Since $\ran \partial^\star \subseteq \ran N$ by assumption, we find an element $a\in A$ such that $N a = \partial^\star u$. Since $L$ is a weak grading, the number operator $N$ satisfies \eqref{eq:numop_coderiv-N} in Definition \ref{def:gradedGDQ}; thus, we get from the assumption (iii) that
$$N_2(\partial_j a) = \partial_j(N a) = \partial_j(\partial^\star u) = N_2 u_j.$$
Since $N_2$ is assumed to be injective, we conclude from the latter that $\partial_j a = u_j$ for $j=1,\dots,n$, which proves (i).
\end{proof}

The reader might have noticed that neither the statement nor the proof of Theorem \ref{thm:characterization_free_gradients_GDQ} make use of the multiplication map $\mu$. Thus, the statement remains true in a more general setting without an underlying algebra structure; we leave the details to the reader.

The following theorem is a variant of Theorem \ref{thm:characterization_free_gradients_GDQ}; this makes use of the observation recorded in Lemma \ref{lem:irrotational_properties}.

\begin{theorem}\label{thm:characterization_free_gradients_GDQ_v2}
Let $(A,\mu,\partial)$ be a multivariable GDQ ring. Suppose that the following conditions are satisfied:
\begin{itemize}
 \item There exists a divergence $\partial^\star=(\partial^\star_1,\dots,\partial^\star_n)$ for $(A,\mu,\partial)$ in the sense of Definition \ref{def:GDQring_divergence}; denote by $L$ and $N$ the weak grading and the associated number operator, respectively, which are induced by $\partial^\star$ as explained in Lemma \ref{lem:divergence_induces_grading}.
 \item The operator $N_2$ is invertible and $N_3$ is injective. 
\end{itemize}
Then, for any $n$-tuple $u=(u_1,\dots,u_n)\in (A \otimes A)^n$, the following statements are equivalent:
\begin{enumerate}
 \item $u$ is a free gradient.
 \item $u$ is irrotational.
 \item We have that $\partial^\star u \in \ran N$ and $\partial_j( \partial^\star u ) = N_2 u_j$ for $j=1,\dots,n$.
\end{enumerate}
\end{theorem}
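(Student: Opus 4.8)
The plan is to keep the three-implication scheme used for Theorem \ref{thm:characterization_free_gradients_GDQ} and to replace the hypothesis $\ran\partial^\star\subseteq\ran N$ by a short ``bootstrapping'' argument that extracts the required range condition from the invertibility of $N_2$ together with the injectivity of $N_3$. Two of the three implications should go through essentially verbatim. For ``(i) $\Longrightarrow$ (ii)'', if $\partial a=u$ then the joint coassociativity relation \eqref{eq:GDQring_coass} gives $(\id_A\otimes\partial_i)(u_j)=(\id_A\otimes\partial_i)\circ\partial_j(a)=(\partial_j\otimes\id_A)\circ\partial_i(a)=(\partial_j\otimes\id_A)(u_i)$, so $u$ is irrotational. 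For ``(iii) $\Longrightarrow$ (i)'', since $\partial^\star u\in\ran N$ by (iii) one picks $a\in A$ with $Na=\partial^\star u$; because $L$ is a weak grading, $N$ obeys \eqref{eq:numop_coderiv-N}, so $N_2(\partial_j a)=\partial_j(Na)=\partial_j(\partial^\star u)=N_2 u_j$, and the injectivity of $N_2$ forces $\partial_j a=u_j$ for all $j$, i.e. $\partial a=u$.

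The real content is the implication ``(ii) $\Longrightarrow$ (iii)''. Assume $u=(u_1,\dots,u_n)$ is irrotational. Lemma \ref{lem:irrotational_properties} Item \eqref{it:irrotational_properties-iii} immediately yields $\partial_j(\partial^\star u)=N_2 u_j$ for $j=1,\dots,n$, so only $\partial^\star u\in\ran N$ remains to be shown. Here I would use that $N_2$ is invertible: set $u_j':=N_2^{-1}u_j$ for $j=1,\dots,n$, so that $(N_2 u_1',\dots,N_2 u_n')=(u_1,\dots,u_n)$ is irrotational by hypothesis. Then Lemma \ref{lem:irrotational_properties} Item \eqref{it:irrotational_properties-ii}, whose injectivity assumption on $N_3$ is exactly what is supplied here, shows that $u'=(u_1',\dots,u_n')$ is itself irrotational. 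Applying Lemma \ref{lem:irrotational_properties} Item \eqref{it:irrotational_properties-iii} to $u'$ gives $(N\circ\partial^\star)(u')=\partial^\star(N_2 u_1',\dots,N_2 u_n')=\partial^\star u$, hence $\partial^\star u=N(\partial^\star u')\in\ran N$, which completes (iii).

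The only genuinely new step relative to Theorem \ref{thm:characterization_free_gradients_GDQ} is this production of an $N$-preimage of $\partial^\star u$, and it is where the pair of assumptions ``$N_2$ invertible'' and ``$N_3$ injective'' pays off; I expect it to be the main (though still short) obstacle, since one must correctly chain the two parts of Lemma \ref{lem:irrotational_properties} in the right order. Everything else — both the remaining implications and the bookkeeping with \eqref{eq:numop_coderiv-N} and \eqref{eq:numop_coderiv-N2} — is routine and can be imported directly from the proof of Theorem \ref{thm:characterization_free_gradients_GDQ}.
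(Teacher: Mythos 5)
Your proposal is correct and follows essentially the same route as the paper's own proof: the implications ``(i) $\Longrightarrow$ (ii)'' and ``(iii) $\Longrightarrow$ (i)'' are imported from Theorem \ref{thm:characterization_free_gradients_GDQ}, and the range condition $\partial^\star u \in \ran N$ is obtained exactly as in the paper by setting $u_j' := N_2^{-1} u_j$, invoking Lemma \ref{lem:irrotational_properties} Item \eqref{it:irrotational_properties-ii} (where $N_3$ injective is used) to see that $u'$ is irrotational, and then applying Item \eqref{it:irrotational_properties-iii} to $u'$.
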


\begin{proof}
Like in the proof of Theorem \ref{thm:characterization_free_gradients_GDQ}, we conclude from the coassociativity relation \eqref{eq:GDQring_coass} required by Definition \ref{def:GDQring} that every free gradient in $(A \otimes A)^n$ is necessarily irrotational. 

Now, let us suppose that $u=(u_1,\dots,u_n) \in (A\otimes A)^n$ is irrotational. Like in the proof of the implication ``(ii) $\Longrightarrow$ (iii)'' for Theorem \ref{thm:characterization_free_gradients_GDQ}, we involve Lemma \ref{lem:irrotational_properties} Item \eqref{it:irrotational_properties-iii} to see that $\partial_j( \partial^\star u ) = N_2 u_j$ for $j=1,\dots,n$.
Next, we use Lemma \ref{lem:irrotational_properties} Item \eqref{it:irrotational_properties-ii} to conclude that with $u$ also $u':=(u_1',\dots,u_n')$ defined by $u_j' := N_2^{-1} u_j$ for $j=1,\dots,n$ is irrotational. Thus, we can apply Lemma \ref{lem:irrotational_properties} Item \eqref{it:irrotational_properties-iii} to $u'$ instead of $u$, which gives us that $\partial^\star u = \partial^\star(N_2 u_1', \dots, N_2 u_n') = N (\partial^\star u') \in \ran N$.
Together, this shows that $u$ satisfies (iii).

Finally, if $u$ satisfies (iii), we may proceed like in the proof of ``(iii) $\Longrightarrow$ (i)'' for Theorem \ref{thm:characterization_free_gradients_GDQ} in order to show that $u$ is a free gradient.
\end{proof}

\section{THE MULTIVARIABLE GDQ RING OF NC POLYNOMIALS}\label{sec:NCPolys}

In Section \ref{sec:gradients_NCPolys}, we will show that the major parts of Theorems \ref{thm:characterization_cyclic_gradients} and \ref{thm:characterization_free_gradients} follow from the generic results about multivariable GDQ rings which we stated in the Theorems \ref{thm:characterization_cyclic_gradients_GDQ} and \ref{thm:characterization_free_gradients_GDQ}. For that purpose, we first recall some well-known facts which set up the multivariable GDQ ring of noncommutative polynomials.

\subsection{Nc polynomials} 

In the following, $\C\langle x_1,\dots,x_n\rangle$ will denote the complex unital algebra of all \emph{noncommutative polynomials} in the formal non-commuting variables $x_1,\dots,x_n$; note that every $p\in\C\langle x_1,\dots,x_n\rangle$ can be written as a linear combination of the monomials $\{1\} \cup \{x_{i_1} x_{i_2} \cdots x_{i_k} \mid k\geq 1, 1\leq i_1,\dots,i_k \leq n\}$.

\subsection{Nc and cyclic derivatives}\label{subsec:NC_cyclic_derivatives}

On the algebra $\C\langle x_1,\dots,x_n\rangle$ of noncommutative polynomials, we define the \emph{noncommutative derivatives} as the linear maps
$$\partial_1,\dots,\partial_n:\ \C\langle x_1,\dots,x_n\rangle \to \C\langle x_1,\dots,x_n\rangle \otimes \C\langle x_1,\dots,x_n\rangle$$
which satisfy $\partial_j 1 = 0$ and whose values on all other monomials are declared to be
$$\partial_j x_{i_1} \cdots x_{i_k} = \sum^k_{p=1} \delta_{j,i_p}\, x_{i_1} \cdots x_{i_{p-1}} \otimes x_{i_{p+1}} \cdots x_{i_k}.$$
Note that, depending on the situation, we will sometimes write $\partial_{x_j}$ instead of $\partial_j$.

We turn $\C\langle x_1,\dots,x_n\rangle \otimes \C\langle x_1,\dots,x_n\rangle$ into a $\C\langle x_1,\dots,x_n\rangle$-bimodule by imposing the action that is determined by $p_1 \cdot (q_1 \otimes q_2) \cdot p_2 := (p_1 q_1) \otimes (q_2 p_2)$. It is easily seen that the noncommutative derivatives $\partial_1,\dots,\partial_n$ are $\C\langle x_1,\dots,x_n\rangle \otimes \C\langle x_1,\dots,x_n\rangle$-valued derivations on $\C\langle x_1,\dots,x_n\rangle$ which are uniquely determined by the condition that $\partial_j x_i = \delta_{i,j} 1 \otimes 1$ for $i,j=1,\dots,n$.

Further, we recall that the noncommutative derivatives are known to satisfy the \emph{(joint) coassociativity relation}
\begin{equation}\label{eq:coassociativity}
(\id \otimes \partial_i) \circ \partial_j = (\partial_j \otimes \id) \circ \partial_i \qquad\text{for $i,j=1,\dots,n$}. 
\end{equation}

Thus, if we let $\mu: \C\langle x_1,\dots,x_n\rangle \otimes \C\langle x_1,\dots,x_n\rangle \to \C\langle x_1,\dots,x_n\rangle$ be the linear map which is induced by the ordinary multiplication on $\C\langle x_1,\dots,x_n\rangle$, we may summarize these facts as follows:

\begin{proposition}\label{prop:GDQring_ncpolys}
$(\C\langle x_1,\dots,x_n\rangle,\mu,\partial)$ endowed with the \emph{free gradient $\partial=(\partial_1,\dots,\partial_n)$} is a unital multivariable GDQ ring in the sense of Definition \ref{def:GDQring}.
\end{proposition}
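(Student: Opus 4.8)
The plan is to verify the two defining properties of a multivariable GDQ ring from Definition \ref{def:GDQring}, namely the joint coassociativity relation \eqref{eq:GDQring_coass} and the Leibniz rule \eqref{eq:GDQring_deriv}, for the triple $(\C\langle x_1,\dots,x_n\rangle,\mu,\partial)$. Both are classical facts, so the work consists essentially of recalling why they hold; the only genuine subtlety is that $\C\langle x_1,\dots,x_n\rangle \otimes \C\langle x_1,\dots,x_n\rangle$ carries the canonical bimodule structure described in Section \ref{subsec:bimodules}, so that \eqref{eq:GDQring_deriv} is exactly \eqref{eq:derivation} for $d = \partial_i$.

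First I would observe that the coassociativity relation \eqref{eq:coassociativity} was already stated in Section \ref{subsec:NC_cyclic_derivatives}; since the roles of $i$ and $j$ there match those in \eqref{eq:GDQring_coass} after relabeling, this property is immediate. For completeness one checks \eqref{eq:coassociativity} on a monomial $x_{i_1}\cdots x_{i_k}$: both sides produce the sum, over all pairs $1\le p<q\le k$ of positions where $x_{i_p}=x_j$ and $x_{i_q}=x_i$ (resp.\ the symmetric pattern), of the three-fold tensor $x_{i_1}\cdots x_{i_{p-1}} \otimes x_{i_{p+1}}\cdots x_{i_{q-1}} \otimes x_{i_{q+1}}\cdots x_{i_k}$, so they agree; the $k=0$ case is trivial since $\partial_i 1 = 0$.

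Next I would check that each $\partial_i$ is a derivation, i.e.\ \eqref{eq:GDQring_Leibniz}: $\partial_i(p q) = (\partial_i p)\cdot q + p\cdot(\partial_i q)$ for all $p,q$. By bilinearity it suffices to verify this for two monomials $p = x_{i_1}\cdots x_{i_k}$ and $q = x_{j_1}\cdots x_{j_\ell}$, where $p q = x_{i_1}\cdots x_{i_k} x_{j_1}\cdots x_{j_\ell}$; the defining sum for $\partial_i(pq)$ ranges over all positions $1,\dots,k+\ell$ at which the variable equals $x_i$, and splitting this range into the first $k$ positions versus the last $\ell$ positions yields exactly $(\partial_i p)\cdot q$ (a position inside $p$ leaves the tail factor multiplied on the right by $q$) plus $p\cdot(\partial_i q)$ (a position inside $q$ leaves the head factor multiplied on the left by $p$), using the bimodule action $p_1\cdot(r_1\otimes r_2)\cdot p_2 = (p_1 r_1)\otimes(r_2 p_2)$. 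Rephrasing this identity via $\mu_\la = \mu\otimes\id$ and $\mu_\ra = \id\otimes\mu$ as in Section \ref{subsec:bimodules} gives precisely \eqref{eq:GDQring_deriv}. Since $\C\langle x_1,\dots,x_n\rangle$ is unital, the triple is a unital multivariable GDQ ring, which is the assertion.

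I do not anticipate a real obstacle here: the statement is a bookkeeping exercise on monomials together with a careful match of conventions. If anything, the one point deserving attention is making sure the tensor-leg conventions in the definition of $\partial_i$ and in the bimodule action are consistent, so that the right-hand side of \eqref{eq:GDQring_deriv} is assembled with $\mu\otimes\id$ and $\id\otimes\mu$ in the correct order; once that is pinned down, everything follows by direct computation.
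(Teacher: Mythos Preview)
Your proposal is correct and follows exactly the approach the paper takes: the paper states the Leibniz rule and the joint coassociativity relation \eqref{eq:coassociativity} as known facts in the paragraphs preceding the proposition and then simply records the proposition as a summary, without a separate proof. You supply the explicit monomial computations that the paper leaves implicit (``It is easily seen that\dots'', ``are known to satisfy\dots''), so your write-up is a fleshed-out version of the same argument rather than a different one.
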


Finally, we remind the reader of the definition of the \emph{cyclic derivatives}
$$\D_1,\dots,\D_n:\ \C\langle x_1,\dots,x_n\rangle \to \C\langle x_1,\dots,x_n\rangle.$$
Those linear maps are defined by linear extension of $\D_j 1 = 0$ and
$$\D_j x_{i_1} \cdots x_{i_k} = \sum^k_{p=1} \delta_{j,i_p}\, x_{i_{p+1}} \cdots x_{i_k} x_{i_1} \cdots x_{i_{p-1}}.$$
These $\D_1,\dots,\D_n$ are precisely the cyclic derivatives associated to the noncommutative derivatives $\partial$ in the language of Definition \ref{def:GDQring_cyclic_derivatives} and $\D = (\D_1, \dots, \D_n)$ is the \emph{cyclic gradient}.

\subsection{The number operator}\label{subsec:number_operator}

The multivariable GDQ ring of noncommutative polynomials which we have set up in Proposition \ref{prop:GDQring_ncpolys} is also graded. To see this, let us first recall that the \emph{number operator $N$} on $\C\langle x_1,\dots,x_n\rangle$ is defined by linear extension of $N 1 := 0$ and
$$N x_{i_1} \cdots x_{i_k} := k x_{i_1} \cdots x_{i_k},\qquad k\in\N,\ 1\leq i_1,\dots,i_k \leq n.$$
It is easy to see that $N$ satisfies
\begin{equation}\label{eq:number_operator}
N p = \sum^n_{j=1} (\partial_j p) \sharp x_j \qquad\text{for all $p\in \C\langle x_1,\dots,x_n\rangle$}.
\end{equation}
The formula \eqref{eq:number_operator} suggests to write $N$ in the sense of Lemma \ref{lem:divergence_induces_grading} as the number operator associated to a suitable divergence for $(\C\langle x_1,\dots,x_n\rangle,\mu,\partial)$. Indeed, it turns out (see Remark \ref{rem:prototypical_divergence}) that $\partial^\star=(\partial_1^\star,\dots,\partial_n^\star)$ defined by
$$\partial_j^\star:\ \C\langle x_1,\dots,x_n\rangle \otimes \C\langle x_1,\dots,x_n\rangle \to \C\langle x_1,\dots,x_n\rangle,\quad u \mapsto u \sharp x_j$$
yields a divergence for $(\C\langle x_1,\dots,x_n\rangle,\mu,\partial)$ in the sense of Definition \ref{def:GDQring_divergence}; obviously, \eqref{eq:number_operator} can be rewritten as $N = \partial^\star \circ \partial$.
Put $L:=N+\id$, where $\id$ stands for the identity on $\C\langle x_1,\dots, x_n\rangle$; since each $\partial_j^\star$ is even a $\C\langle x_1,\dots,x_n\rangle$-bimodule homomorphism, we conclude from Lemma \ref{lem:divergence_induces_grading} the following.

\begin{proposition}\label{prop:GDQring_ncpolys_grading}
The operator $L$ is a grading on $(\C\langle x_1,\dots,x_n\rangle,\mu,\partial)$ in the sense of Definition \ref{def:gradedGDQ}, which is induced by the divergence $\partial^\star=(\partial_1^\star,\dots,\partial_n^\star)$.
\end{proposition}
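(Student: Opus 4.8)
The plan is to verify that the maps $\partial_j^\star\colon u\mapsto u\sharp x_j$ are indeed $\C\langle x_1,\dots,x_n\rangle$-bimodule homomorphisms and that they constitute a divergence, and then simply invoke Lemma \ref{lem:divergence_induces_grading}. First I would recall from Remark \ref{rem:prototypical_divergence} that, since the noncommutative derivatives satisfy $\partial_j x_i = \delta_{i,j}\, 1\otimes 1$ for $i,j=1,\dots,n$, the formula $\partial_j^\star(u) := u\sharp x_j$ produces a divergence for $(\C\langle x_1,\dots,x_n\rangle,\mu,\partial)$; the verification of \eqref{eq:GDQring_divergence} is exactly the content of the Leibniz identity \eqref{eq:GDQring_Leibniz-sharp} applied to $a = x_j$, where the term $u\sharp(\partial_i x_j) = \delta_{i,j}\, u\sharp(1\otimes 1) = \delta_{i,j}\, u$ supplies the Kronecker delta. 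So the only genuinely new thing to check is that each $\partial_j^\star$ is a bimodule homomorphism, i.e.\ $\partial_j^\star(p_1\cdot u\cdot p_2) = p_1\cdot\partial_j^\star(u)\cdot p_2$ for all $p_1,p_2\in\C\langle x_1,\dots,x_n\rangle$ and all $u\in\C\langle x_1,\dots,x_n\rangle\otimes\C\langle x_1,\dots,x_n\rangle$.

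For that step I would reduce to elementary tensors $u = q_1\otimes q_2$ by linearity and compute both sides directly: the left side is $(p_1 q_1)\otimes(q_2 p_2)\sharp x_j = (p_1 q_1) x_j\otimes(q_2 p_2) = p_1(q_1 x_j)\otimes(q_2) p_2$, while $p_1\cdot(q_1\otimes q_2\sharp x_j)\cdot p_2 = p_1\cdot(q_1 x_j\otimes q_2)\cdot p_2 = (p_1 q_1 x_j)\otimes(q_2 p_2)$; these agree, so $\partial_j^\star$ is a bimodule map. (Equivalently, this is the associativity of the $\sharp$-action together with $(b_1\otimes b_2)\sharp(q_1\otimes q_2)\sharp x_j$ being computed the same way regardless of the order of the two $\sharp$-operations.) I would phrase this as the identities $\partial_j^\star\circ(\mu\otimes\id) = \mu\circ(\id\otimes\partial_j^\star)$ and $\partial_j^\star\circ(\id\otimes\mu) = \mu\circ(\partial_j^\star\otimes\id)$, in the form required by Lemma \ref{lem:divergence_induces_grading}.

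Having established that $\partial^\star=(\partial_1^\star,\dots,\partial_n^\star)$ is a divergence consisting of $\C\langle x_1,\dots,x_n\rangle$-bimodule homomorphisms, Lemma \ref{lem:divergence_induces_grading} applies in its stronger form and yields that $L = \partial^\star\circ\partial + \id$ is a grading (not merely a weak grading) on $(\C\langle x_1,\dots,x_n\rangle,\mu,\partial)$ in the sense of Definition \ref{def:gradedGDQ}. It only remains to identify this $L$ with the one in the statement, i.e.\ to confirm $\partial^\star\circ\partial = N$ for the number operator $N$; but this is precisely the already-recorded identity \eqref{eq:number_operator}, namely $N p = \sum_{j=1}^n(\partial_j p)\sharp x_j = \sum_{j=1}^n\partial_j^\star(\partial_j p) = (\partial^\star\circ\partial)(p)$. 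Hence the $L$ induced by $\partial^\star$ is exactly $N+\id$, completing the proof.

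I do not expect any real obstacle here; the proposition is essentially a dictionary entry matching the concrete setup of noncommutative polynomials against the abstract machinery of Section \ref{sec:DiffCalculus_GDQ}. The mildly delicate point, if any, is making sure the bimodule-homomorphism conditions are stated in the exact operator form $\partial_i^\star\circ(\mu\otimes\id_A) = \mu\circ(\id_A\otimes\partial_i^\star)$ and $\partial_i^\star\circ(\id_A\otimes\mu) = \mu\circ(\partial_i^\star\otimes\id_A)$ used in Lemma \ref{lem:divergence_induces_grading}, rather than the informal ``$p_1\cdot u\cdot p_2$'' version; but translating between the two is routine once one recalls that for the algebraic tensor product $\mu_\la = \mu\otimes\id_A$ and $\mu_\ra = \id_A\otimes\mu$, as noted in Section \ref{subsec:bimodules}.
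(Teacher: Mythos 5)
Your proposal is correct and follows essentially the same route as the paper: the paper likewise obtains the grading by observing that $\partial_j^\star(u) := u \sharp x_j$ is a divergence in the sense of Remark \ref{rem:prototypical_divergence} (via $\partial_j x_i = \delta_{i,j}\,1\otimes 1$ and the Leibniz rule \eqref{eq:GDQring_Leibniz-sharp}), that each $\partial_j^\star$ is a bimodule homomorphism, that $N = \partial^\star\circ\partial$ by \eqref{eq:number_operator}, and then invoking the strong form of Lemma \ref{lem:divergence_induces_grading}. One small notational slip: $\partial_j^\star$ maps $A\otimes A$ into $A$, so $(q_1\otimes q_2)\sharp x_j = q_1 x_j q_2$ is a polynomial, not a tensor; your intermediate expressions such as $(p_1q_1)x_j\otimes(q_2p_2)$ should read $(p_1q_1)x_j(q_2p_2)$, after which the bimodule property is just associativity of multiplication, exactly as you conclude.
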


Further, we observe that $\C\langle x_1,\dots, x_n\rangle$ decomposes into the eigenspaces of the number operator $N$ as
$$\C\langle x_1,\dots, x_n\rangle = \bigoplus_{k\geq 0} \C^{(k)}\langle x_1,\dots,x_n\rangle,$$
where the subspace $\C^{(k)}\langle x_1,\dots, x_n\rangle$ consists of all noncommutative polynomials in $\C\langle x_1,\dots, x_n\rangle$, which are homogeneous of degree $k$.
Similarly, for every integer $m\geq 1$, the algebra $\C\langle x_1,\dots, x_n\rangle^{\otimes m}$ decomposes into the eigenspaces of the operators $N_m$ and $L_m$, which were introduced in Section \ref{subsec:irrotational_tules}; indeed,
$$\C\langle x_1,\dots, x_n\rangle^{\otimes m} = \bigoplus_{k\geq 0} P^m_k,$$
where, for each $k\geq 0$, the subspace
$$P^m_k := \bigoplus_{\substack{k_1,\dots,k_m\geq 0\\ k_1 + \dots + k_m = k}} \C^{(k_1)}\langle x_1,\dots,x_n\rangle \otimes \dots \otimes \C^{(k_m)}\langle x_1,\dots,x_n\rangle$$
is the eigenspace of $N_m$ for the eigenvalue $k+m-1$ and at the same time the eigenspace of $L_m$ for the eigenvalue $k+m$. From these observations, we conclude the following.

\begin{proposition}\label{prop:GDQring_ncpolys_number-operators}
The operators $N_m$, for $m\geq 2$, and $L_m$, for $m\geq 1$, are invertible on $\C\langle x_1,\dots, x_n\rangle^{\otimes m}$. Moreover, we have that
\begin{equation}\label{eq:number_operator_surjective}
\ran N = \bigoplus_{k\geq 1} \C^{(k)}\langle x_1,\dots,x_n \rangle.
\end{equation}
\end{proposition}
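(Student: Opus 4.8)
The plan is to read everything off the two eigenspace decompositions recorded immediately before the statement, so the ``proof'' is essentially bookkeeping with eigenvalues. Recall that
$$\C\langle x_1,\dots,x_n\rangle^{\otimes m} = \bigoplus_{k\geq 0} P^m_k,$$
that $N_m$ acts on $P^m_k$ as multiplication by the scalar $k+m-1$, and that $L_m$ acts on $P^m_k$ as multiplication by $k+m$; for $m=1$ this specializes to $N$ acting on $\C^{(k)}\langle x_1,\dots,x_n\rangle$ as multiplication by $k$, and $L$ acting there as multiplication by $k+1$. In particular each $P^m_k$ is invariant under $N_m$ and $L_m$, so these operators are the direct sums (over $k\geq 0$) of the corresponding scalar multiples of the identities on the summands.

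First I would handle the invertibility assertions. For $m\geq 2$ every scalar $k+m-1$ with $k\geq 0$ satisfies $k+m-1\geq m-1\geq 1$, hence is nonzero; therefore the linear map on $\C\langle x_1,\dots,x_n\rangle^{\otimes m}$ which acts as $(k+m-1)^{-1}\id_{P^m_k}$ on each summand $P^m_k$ is a well-defined two-sided inverse of $N_m$. The argument for $L_m$ with $m\geq 1$ is identical: there the relevant scalars are $k+m\geq m\geq 1\neq 0$, so $\bigoplus_{k\geq 0}(k+m)^{-1}\id_{P^m_k}$ inverts $L_m$. Note that $N=N_1$ is \emph{not} invertible, since the eigenvalue $k+m-1$ becomes $0$ at $k=0$, $m=1$; this is precisely what makes the second assertion meaningful.

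Next I would settle the range of $N$. Since $N$ respects the decomposition $\C\langle x_1,\dots,x_n\rangle=\bigoplus_{k\geq 0}\C^{(k)}\langle x_1,\dots,x_n\rangle$ and acts on $\C^{(k)}\langle x_1,\dots,x_n\rangle$ as multiplication by $k$, the image of this summand under $N$ is $\{0\}$ for $k=0$ and all of $\C^{(k)}\langle x_1,\dots,x_n\rangle$ for $k\geq 1$; hence $\ran N$ is the direct sum of these images, namely $\bigoplus_{k\geq 1}\C^{(k)}\langle x_1,\dots,x_n\rangle$, as claimed. There is no genuine obstacle here; the only point worth a word is that the displayed sums are honest algebraic direct sum decompositions of the full spaces, so that an operator prescribed summand by summand (the candidate inverse, or the image of $N$) is automatically well defined on all of $\C\langle x_1,\dots,x_n\rangle^{\otimes m}$.
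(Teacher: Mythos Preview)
Your argument is correct and matches the paper's approach exactly: the proposition is stated there as an immediate consequence of the eigenspace decompositions recorded just before it, with no further proof given. Your write-up simply spells out the bookkeeping with the eigenvalues $k+m-1$ and $k+m$, which is precisely what the paper leaves implicit.
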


\subsection{The cyclic symmetrization operator}\label{subsec:cyclic_symmetrization_operator}

We recall further that the \emph{cyclic symmetrization operator} is the linear map $C: \C\langle x_1,\dots,x_n\rangle \to \C\langle x_1,\dots,x_n\rangle$ which is determined by $C 1 := 0$ and, for all other monomials, by
$$C x_{i_1} \cdots x_{i_k} := \sum^k_{p=1} x_{i_{p+1}} \cdots x_{i_k} x_{i_1} \cdots x_{i_p}.$$
It is easily seen that for every $p\in \C\langle x_1,\dots,x_n\rangle$
$$C p = \sum^n_{j=1} x_j (\D_j p) = \sum^n_{j=1} (\D_j p) x_j.$$
Since each $\partial_i^\star$ is by definition a $\C\langle x_1,\dots,x_n\rangle$-bimodule homomorphism, we conclude the following.

\begin{proposition}\label{prop:GDQring_ncpolys_cyclic-symmetrization}
There exists a cyclic divergence $\D^\star=(\D^\star_1,\dots,\D^\star_n)$ compatible with the divergence $\partial^\star=(\partial_1^\star,\dots,\partial_n^\star)$ in the sense of Definition \ref{def:GDQring_cyclic_divergence}; it is associated to $\partial^\star$ by Lemma \ref{lem:compatible_cyclic_divergence} and hence given by $\D_j^\star p = x_j p$ for $p\in\C\langle x_1,\dots,x_n\rangle$ and $j=1,\dots,n$.
The cyclic symmetrization operator $C$ is precisely the cyclic symmetrization operator associated to $\D^\star$ in the sense of Definition \ref{def:GDQring_cyclic_divergence}.
\end{proposition}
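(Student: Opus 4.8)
The plan is to obtain both assertions as immediate consequences of Lemma~\ref{lem:compatible_cyclic_divergence}, combined with the explicit formulas for $\partial^\star$ and $C$ that were recalled above; no genuinely new computation is required.

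First I would recall, as already noted in Section~\ref{subsec:number_operator} and Section~\ref{subsec:cyclic_symmetrization_operator}, that the divergence $\partial^\star=(\partial^\star_1,\dots,\partial^\star_n)$ given by $\partial^\star_j(u)=u\sharp x_j$ consists of $\C\langle x_1,\dots,x_n\rangle$-bimodule homomorphisms: for $p,q\in\C\langle x_1,\dots,x_n\rangle$ and $u\in\C\langle x_1,\dots,x_n\rangle\otimes\C\langle x_1,\dots,x_n\rangle$, associativity of the multiplication gives $(p\cdot u\cdot q)\sharp x_j=p\cdot(u\sharp x_j)\cdot q$, which is precisely the condition $\partial^\star_j\circ(\mu\otimes\id)=\mu\circ(\id\otimes\partial^\star_j)$ together with $\partial^\star_j\circ(\id\otimes\mu)=\mu\circ(\partial^\star_j\otimes\id)$. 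Since $(\C\langle x_1,\dots,x_n\rangle,\mu,\partial)$ is unital (Proposition~\ref{prop:GDQring_ncpolys}), Lemma~\ref{lem:compatible_cyclic_divergence} then applies directly: choosing the variant $\D^\star_j(p):=\partial^\star_j(1\otimes p)$, it produces a cyclic divergence $\D^\star=(\D^\star_1,\dots,\D^\star_n)$ compatible with $\partial^\star$. Unwinding the definition, $\D^\star_j(p)=\partial^\star_j(1\otimes p)=(1\otimes p)\sharp x_j=1\cdot x_j\cdot p=x_j p$, which is the claimed formula. (The other variant $\D^\star_j(p):=\partial^\star_j(p\otimes 1)=p x_j$ is equally admissible; fixing the left-hand version is the only point where a convention must be matched to the statement.)

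Next I would compute the cyclic symmetrization operator associated to this $\D^\star$. Using the definition of $\D^\star$ and the identity $Cp=\sum^n_{j=1}x_j(\D_j p)$ recorded in Section~\ref{subsec:cyclic_symmetrization_operator}, one obtains
$$(\D^\star\circ\D)(p)=\sum^n_{j=1}\D^\star_j(\D_j p)=\sum^n_{j=1}x_j(\D_j p)=Cp\qquad\text{for every }p\in\C\langle x_1,\dots,x_n\rangle,$$
so that $C=\D^\star\circ\D$ is exactly the cyclic symmetrization operator attached to $\D^\star$ in the sense of Definition~\ref{def:GDQring_cyclic_divergence}.

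I do not expect any real obstacle here: the proposition merely repackages facts established earlier in this section into the GDQ-ring vocabulary. The only minor point deserving attention is the two-sidedness of the formula for $C$, namely $\sum^n_{j=1}x_j(\D_j p)=\sum^n_{j=1}(\D_j p)x_j$; this is what makes the choice of left versus right variant of $\D^\star$ irrelevant for the identity $C=\D^\star\circ\D$, and it can be verified in one line on monomials from the cyclic-rotation descriptions of the $\D_j$ and of $C$.
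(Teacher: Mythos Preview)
Your proposal is correct and matches the paper's approach: the proposition is stated there without an explicit proof, being introduced by ``we conclude the following'' after noting that each $\partial_j^\star$ is a bimodule homomorphism, i.e., as an immediate application of Lemma~\ref{lem:compatible_cyclic_divergence}. Your only addition is spelling out why the variant $\D_j^\star(p)=\partial_j^\star(1\otimes p)$ (rather than $\partial_j^\star(p\otimes 1)$) is the one yielding $\D_j^\star p=x_jp$, and observing that the two-sided identity $\sum_j x_j(\D_j p)=\sum_j (\D_j p)x_j$ makes this choice immaterial for $C$; both points are accurate.
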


\subsection{Universality}

Since $\C\langle x_1,\dots,x_n\rangle$ is generated as a complex unital algebra by $x_1,\dots,x_n$ and since we have $\partial_i x_j = \delta_{i,j} 1 \otimes 1$, we infer that
\begin{equation}\label{eq:nc_derivatives-universality}
d(p) = \sum^n_{j=1} (\partial_j p) \sharp d(x_j) \qquad\text{for all $p\in \C\langle x_1,\dots,x_n\rangle$}
\end{equation}
holds for every derivation $d: \C\langle x_1,\dots,x_n\rangle \to M$ with values in an arbitrary $\C\langle x_1,\dots,x_n\rangle$-bimodule $M$. Therefore, Lemmas \ref{lem:proper} and \ref{lem:universal} say that the multivariable GDQ ring $(\C\langle x_1,\dots,x_n\rangle,\mu,\partial)$ is universal. In particular, we have that
\begin{equation}\label{eq:Voi-formula}
[p, 1\otimes1] = \sum^n_{j=1} (\partial_j p) \sharp [x_j,1 \otimes 1] \qquad\text{for all $p\in \C\langle x_1,\dots,x_n\rangle$},
\end{equation}
which is a formula that was used by Voiculescu in his proof of the free Poincar\'e inequality.
Here, we show that the noncommutative derivatives are determined by this condition; the following lemma gives the precise statement.

\begin{lemma}\label{lem:coefficients_unique}
Let $p\in\C\langle x_1,\dots,x_n\rangle$ be given. Then there is a unique $n$-tuple $u=(u_1,\dots,u_n)$ of bi-polynomials in $\C\langle x_1,\dots,x_n\rangle \otimes \C\langle x_1,\dots,x_n\rangle$ such that
\begin{equation}\label{eq:coefficients_unique}
[p, 1\otimes1] = \sum^n_{j=1} u_j \sharp [x_j,1 \otimes 1]
\end{equation}
holds, and this unique $n$-tuple is given by the free gradient of $p$, i.e., $u = \partial p$.
\end{lemma}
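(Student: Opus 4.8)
The existence of such a $u$ is precisely formula \eqref{eq:Voi-formula}, which is already established; so the whole content of the lemma is \emph{uniqueness}. The plan is to reduce uniqueness to a single statement: if $u=(u_1,\dots,u_n) \in (\C\langle x_1,\dots,x_n\rangle \otimes \C\langle x_1,\dots,x_n\rangle)^n$ satisfies $\sum_{j=1}^n u_j \sharp [x_j, 1\otimes 1] = 0$, then $u = 0$. By linearity this clearly suffices.

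To prove this, I would exploit the grading. Recall $[x_j, 1\otimes 1] = x_j \otimes 1 - 1 \otimes x_j$, so $(p_1 \otimes p_2)\sharp[x_j,1\otimes1] = p_1 x_j \otimes p_2 - p_1 \otimes x_j p_2$. The first approach I would try is a direct degree-counting argument: decompose each $u_j$ into its $N_2$-homogeneous components $u_j = \sum_{k\geq 0} u_j^{(k)}$ with $u_j^{(k)} \in P_k^2$, i.e. bi-degree summing to $k$. Applying $\sharp[x_j,1\otimes1]$ raises the total degree by exactly $1$, mapping $P_k^2 \to P_{k+1}^2$, and these target spaces for distinct $k$ are linearly independent in $\C\langle x_1,\dots,x_n\rangle^{\otimes 2}$. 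Hence the equation $\sum_j u_j \sharp[x_j,1\otimes1]=0$ decouples degree by degree, and it is enough to treat the case where every $u_j$ is homogeneous of a common bi-degree $k$. Then I would try to read off the vanishing by expanding in the monomial basis: write $u_j = \sum_{v,w} c^{(j)}_{v,w}\, v \otimes w$ where $v,w$ range over monomials, so that $\sum_j u_j \sharp [x_j,1\otimes1] = \sum_j \sum_{v,w} c^{(j)}_{v,w}(v x_j \otimes w - v \otimes x_j w) = 0$, and extract a contradiction from the coefficients by looking, say, at the length of the left-hand monomial leg or at which variable sits at the junction point.

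The main obstacle I anticipate is exactly the final coefficient-comparison step: the terms $v x_j \otimes w$ and $v' \otimes x_j w'$ can collide (a monomial $v x_j$ can equal another $v'$, and $w$ can equal $x_j w'$), so the $2n \cdot (\text{basis})$ many terms are \emph{not} a priori linearly independent, and one must organize the telescoping carefully. A clean way around this, which I would prefer, is an operator-theoretic argument avoiding monomial bookkeeping: note that $\sum_j u_j \sharp[x_j, 1\otimes1] = \bigl(\sum_j u_j \sharp x_j\bigr)\otimes 1 - \sum_j (\id\otimes\mu_j)(u_j)$ type identities, and then apply $\id \otimes \partial_i$ or $\partial_i \otimes \id$ to the relation $\sum_j u_j\sharp[x_j,1\otimes1]=0$ and use the Leibniz-type rule \eqref{eq:GDQring_Leibniz-sharp} together with $\partial_i x_j = \delta_{i,j} 1\otimes 1$ to produce a recursion. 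Concretely, applying a right-leg derivative isolates a term of the form $u_i \otimes 1$ plus lower-complexity terms; combined with an induction on the total degree (the base case of degree $0$, where $u_j \in \C(1\otimes1)$, being immediate since $[x_j,1\otimes1]$ for distinct $j$ are linearly independent), this forces each $u_i = 0$. I would first check whether the Leibniz identity \eqref{eq:GDQring_Leibniz-sharp} applied to $u\sharp a$ with $a = x_j$ gives cleanly $\partial_i(u_j\sharp x_j) = (\id\otimes\partial_i)(u_j)\sharp_1 x_j + (\partial_i\otimes\id)(u_j)\sharp_2 x_j + \delta_{i,j}\,u_j$, which is the identity that makes the recursion work; if so, the whole proof collapses to a short induction and the degree-decoupling remark becomes unnecessary.
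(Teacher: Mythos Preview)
Your operator-theoretic instinct is exactly the paper's approach, but you are missing the single step that makes the induction unnecessary. The paper applies $\partial_i \otimes \id$ directly to the full identity \eqref{eq:coefficients_unique} (not to the homogeneous version), obtaining
\[
(\partial_i p)\otimes 1 \;=\; u_i \sharp (1\otimes 1\otimes 1) \;+\; \sum_{j=1}^n (\partial_i\otimes\id)(u_j)\sharp_2 [x_j,1\otimes 1],
\]
and then simply applies $\id\otimes\mu$. Since $\mu\bigl([x_j,1\otimes 1]\bigr)=x_j-x_j=0$, the entire sum on the right vanishes, and what remains is $\partial_i p = u_i$. That is the whole proof: two operator applications, no degree decomposition, no recursion, no monomial bookkeeping.

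Your proposed induction, by contrast, is not cleanly set up: after applying $\partial_i\otimes\id$ to $\sum_j u_j\sharp[x_j,1\otimes 1]=0$ you land in $A^{\otimes 3}$ with an equation of a different shape (the ``lower-complexity'' terms $(\partial_i\otimes\id)(u_j)\sharp_2[x_j,1\otimes 1]$ live in the same graded piece as $u_i\sharp(1\otimes 1\otimes 1)$, so the degree does not actually drop), and it is not obvious how to feed this back into the inductive hypothesis. It can probably be salvaged, but the point is that the $\id\otimes\mu$ trick removes the obstacle you correctly anticipated in your first approach---the telescoping collisions---in one stroke, because $\mu$ is blind to which side of the tensor $x_j$ sits on.
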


\begin{proof}
Due to Voiculescu's formula \eqref{eq:Voi-formula}, we already know that $u = \partial p$ satisfies condition \eqref{eq:coefficients_unique}. Thus, it only remains to prove uniqueness. For doing so, suppose that \eqref{eq:coefficients_unique} holds for some $n$-tuple $u=(u_1,\dots,u_n)$ of bi-polynomials in $\C\langle x_1,\dots,x_n\rangle \otimes \C\langle x_1,\dots,x_n\rangle$. By applying $\partial_i \otimes \id$ for any $i=1,\dots,n$ to both sides of the identity \eqref{eq:coefficients_unique}, we obtain
$$(\partial_i p) \otimes 1 = u_i \sharp (1 \otimes 1 \otimes 1) + \sum^n_{j=1} (\partial_i \otimes \id)(u_j) \sharp_2 [x_j,1 \otimes 1],$$
and after applying $\id \otimes \mu$ (which has the effect that the sum on the right disappears), we conclude that $\partial_i p = u_i$. Thus, $u$ is uniquely determined by the condition \eqref{eq:coefficients_unique}, as asserted.
\end{proof}

\section{THE CHARACTERIZATION OF GRADIENTS OF NC POLYNOMIALS}\label{sec:gradients_NCPolys}

In this section, we finally prove Theorems \ref{thm:characterization_cyclic_gradients} and \ref{thm:characterization_free_gradients}. As announced before, the essential parts of those theorems are immediate consequences of the two Theorems \ref{thm:characterization_cyclic_gradients_GDQ} and \ref{thm:characterization_free_gradients_GDQ} which were formulated in the generality of multivariable GDQ rings; that the required conditions are satisfied is guaranteed by Propositions \ref{prop:GDQring_ncpolys}, \ref{prop:GDQring_ncpolys_grading}, \ref{prop:GDQring_ncpolys_number-operators}, and \ref{prop:GDQring_ncpolys_cyclic-symmetrization}.

\begin{proof}[Proof of Theorem \ref{thm:characterization_cyclic_gradients}]
First, we note that Theorem \ref{thm:characterization_cyclic_gradients_GDQ} gives the equivalence of (i), (iii), and (iv). Therefore, it suffices to prove that (i) implies (ii) and that (ii) implies (iii).
 
The validity of the implication ``(i) $\Longrightarrow$ (ii)'' can be seen as follows. Suppose that $p$ is a cyclic gradient, say $p = \D q$ for some $q\in \C\langle x_1,\dots,x_n\rangle$. Voiculescu's formula \eqref{eq:Voi-formula} applied to $q$ gives $[q, 1 \otimes 1] = \sum^n_{j=1} (\partial_i q) \sharp [x_j, 1 \otimes 1]$ and we infer by applying $\sigma$ and multiplying by $-1$ that $[q, 1 \otimes 1] = \sum^n_{j=1} [x_j,\tpartial_j q]$. Finally, applying $\mu$ to both sides of the latter identity yields $0 = \sum^n_{j=1} [x_j, \D_j q]$, which is (ii) since by assumption $p = \D q$.

In order to prove ``(ii) $\Longrightarrow$ (iii)'', we proceed as follows. Suppose that $p$ satisfies $\sum^n_{j=1} [x_j, p_j] = 0$. The latter is an identity in $\C\langle x_1,\dots,x_n\rangle$, so that $\partial_i$ for $i=1,\dots,n$ can be applied to it; this gives
$$0 = \sum^n_{j=1} \partial_i [x_j,p_j] = [1\otimes 1, p_i] + \sum^n_{j=1} [x_j, \partial_i p_j].$$
Next, we apply $\sigma$ to both sides of the latter identity, which yields
$$[p_i,1\otimes 1] = \sum^n_{j=1} (\tpartial_i p_j) \sharp [x_j, 1 \otimes 1].$$
We infer that the $n$-tuple $(\tpartial_i p_1, \dots, \tpartial_i p_n)$ satisfies condition \eqref{eq:coefficients_unique} in Lemma \ref{lem:coefficients_unique} so that $\partial_j p_i = \sigma(\partial_i p_j)$ must hold for $j=1,\dots,n$, as desired.
\end{proof}

\begin{proof}[Proof of Theorem \ref{thm:characterization_free_gradients}]
The equivalence of (i), (iii), and (iv) is established by the general Theorem \ref{thm:characterization_free_gradients_GDQ}. That further (i) and (ii) are equivalent, is the content of Lemma \ref{lem:coefficients_unique}.
\end{proof}

\section{EMBEDDINGS OF THE MULTIVARIABLE GDQ RING OF POLYNOMIALS}\label{sec:embeddings}

We have already seen that the existence of a divergence gives a rich structure to the underlying multivariable GDQ ring. The following proposition shows that the noncommutative polynomials are necessarily embedded.

\begin{proposition}\label{prop:4.5}
Let $(A,\mu,\partial)$ be a unital multivariable GDQ ring with unit $1_A$ and suppose that $\partial^\star = (\partial_1^\star, \dots, \partial_n^\star)$ is a divergence for $(A,\mu,\partial)$. Define $a=(a_1,\dots,a_n)$ by $a_i := \partial_i^\star(1_A \otimes 1_A)$ for $i=1,\dots,n$. Then the evaluation homomorphism
$$\ev_a:\ \C\langle x_1,\dots,x_n\rangle \to A,\qquad x_i \mapsto a_i$$
is injective and intertwines the derivations, i.e., the following diagram commutes for each $i=1,\dots,n$:
$$\begin{xy}
\xymatrix@R+1pc@C+3pc{
\C\langle x_1,\dots,x_n\rangle\ar[d]_{\partial_{x_i}} \ar[r]^-{\ev_a} & A\ar[d]^{\partial_i}\\
\C\langle x_1,\dots,x_n\rangle \otimes \C\langle x_1,\dots,x_n\rangle \ar[r]^-{\ev_a \otimes \ev_a} & A \otimes A
}
\end{xy}$$
\end{proposition}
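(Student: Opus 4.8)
The plan is to first observe that the distinguished elements $a_i$ behave under $\partial$ exactly as the formal variables $x_i$ do under $\partial_{x_i}$, then to verify the commutativity of the diagram by a direct computation on words, and finally to deduce injectivity by a degree induction. To get started, since $\partial^\star$ is a divergence, Remark~\ref{rem:prototypical_divergence} — precisely the identity \eqref{eq:free_variables} — tells us that $a_i := \partial_i^\star(1_A \otimes 1_A)$ satisfies $\partial_j a_i = \delta_{i,j}\, 1_A \otimes 1_A$ for $i,j = 1,\dots,n$. Because $\C\langle x_1,\dots,x_n\rangle$ is the free unital algebra on $x_1,\dots,x_n$, the assignment $x_i \mapsto a_i$ does extend uniquely to a unital algebra homomorphism $\ev_a$, so the map in the statement is well defined.

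For the commutativity of the diagram I would argue on monomials. On $1$ both composites vanish, since $\partial_j 1_A = 0 = \partial_{x_j} 1$. For a word $w = x_{i_1}\cdots x_{i_k}$, iterating the Leibniz rule \eqref{eq:GDQring_Leibniz} for the derivation $\partial_j$ on $A$ yields
$$\partial_j\big( a_{i_1}\cdots a_{i_k} \big) = \sum_{p=1}^k a_{i_1}\cdots a_{i_{p-1}} \cdot (\partial_j a_{i_p}) \cdot a_{i_{p+1}}\cdots a_{i_k} = \sum_{p=1}^k \delta_{j,i_p}\, (a_{i_1}\cdots a_{i_{p-1}}) \otimes (a_{i_{p+1}}\cdots a_{i_k}),$$
which is exactly $(\ev_a \otimes \ev_a)$ applied to $\partial_{x_j} w = \sum_{p=1}^k \delta_{j,i_p}\, x_{i_1}\cdots x_{i_{p-1}} \otimes x_{i_{p+1}}\cdots x_{i_k}$; by linearity this gives $\partial_j \circ \ev_a = (\ev_a \otimes \ev_a)\circ \partial_{x_j}$ for all $j$. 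One could phrase this more conceptually: both $\partial_j \circ \ev_a$ and $(\ev_a \otimes \ev_a)\circ \partial_{x_j}$ are derivations from $\C\langle x_1,\dots,x_n\rangle$ into $A\otimes A$ with the bimodule structure pulled back along $\ev_a$, they agree on the generators, and hence coincide by the universality formula \eqref{eq:nc_derivatives-universality}.

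It then remains to prove injectivity, for which I would use a degree filtration. Writing $F_d := \bigoplus_{k\le d} \C^{(k)}\langle x_1,\dots,x_n\rangle$, I would show $\ker\ev_a \cap F_d = \{0\}$ by induction on $d$; here one may assume $1_A \neq 0$, the case $A = \{0\}$ being excluded. The base case $d=0$ is immediate from $\ev_a(c\,1) = c\,1_A$. For the step, take $p\in F_d$ with $\ev_a(p)=0$; the intertwining already established gives $(\ev_a\otimes\ev_a)(\partial_{x_j}p) = \partial_j(\ev_a(p)) = 0$ for all $j$. Since $\partial_{x_j}p \in F_{d-1}\otimes F_{d-1}$ and, by the inductive hypothesis, $\ev_a$ is injective on $F_{d-1}$ — whence $\ev_a\otimes\ev_a$ is injective on $F_{d-1}\otimes F_{d-1}$, being a tensor product over $\C$ of injective linear maps — one concludes $\partial_{x_j}p = 0$ for all $j$. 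Then $N p = \sum_j (\partial_{x_j}p)\sharp x_j = 0$ by \eqref{eq:number_operator}, so $p\in\ker N = \C^{(0)}\langle x_1,\dots,x_n\rangle$ by Proposition~\ref{prop:GDQring_ncpolys_number-operators}, i.e.\ $p = c\,1$, and $0 = \ev_a(p) = c\,1_A$ forces $c=0$. As $\C\langle x_1,\dots,x_n\rangle = \bigcup_d F_d$, injectivity follows.

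The only point that really needs care is this inductive mechanism: it is precisely the hypothesis ``$\ev_a$ injective on $F_{d-1}$'' that is needed to upgrade to ``$\ev_a\otimes\ev_a$ injective on $F_{d-1}\otimes F_{d-1}$'', which is in turn what promotes the vanishing of $(\ev_a\otimes\ev_a)(\partial_{x_j}p)$ to $\partial_{x_j}p=0$; and one must not overlook the degree-zero ``constants'', where the nondegeneracy $1_A\neq 0$ enters. Everything else is a routine unwinding of the Leibniz rule and of the defining formula for the number operator.
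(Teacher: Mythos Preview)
Your proof is correct. The paper's own argument is essentially a one-line citation: after noting that $\partial_j a_i = \delta_{i,j}\,1_A\otimes 1_A$ follows from \eqref{eq:GDQring_divergence}, it invokes \cite[Proposition~3.17]{MSW17} for both the intertwining and the injectivity. What you have written is, in effect, a self-contained proof of precisely that cited result: the intertwining via Leibniz on words (or equivalently via the universal formula \eqref{eq:nc_derivatives-universality}), and injectivity by the degree-filtration induction using that $\ev_a\otimes\ev_a$ is injective on $F_{d-1}\otimes F_{d-1}$ once $\ev_a$ is injective on $F_{d-1}$. So your approach is not different in spirit, just more explicit; it supplies the content the paper outsources. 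Your remark that one must assume $1_A\neq 0$ is also to the point, since the statement is vacuous or false for the zero algebra, and this is implicit in the standard convention for ``unital''.
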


\begin{proof}
It follows from the characteristic identity \eqref{eq:GDQring_divergence} that $\partial_i a_j = \delta_{i,j} 1_A \otimes 1_A$ for $i,j=1,\dots,n$. Using (a straightforward extension of) Proposition 3.17 in \cite{MSW17}, we infer from the latter that $\ev_a$ is injective and that it furthermore intertwines the derivations in the sense that $(\ev_a \otimes \ev_a) \circ \partial_{x_i} = \partial_i \circ \ev_a$ for $i=1,\dots,n$. This proves the proposition.
\end{proof}

\begin{remark}
Proposition \ref{prop:4.5} might raise the question to what extent our theory is not only an abstract way of talking just about noncommutative polynomials and their derivatives. One should, however, notice that the divergence of $A$ is not necessarily mapped to the canonical divergence of $\C\langle x_1,\dots,x_n\rangle$ under the embedding $\ev$, as $\partial^\star$ does not have to respect the multiplication on $A$. Hence we might have GDQ rings $(A,\mu,\partial)$ which are isomorphic, as linear objects and respecting the derivative, to the noncommutative polynomials, but for which we have a divergence which is not the image of the canonical one on the polynomials. The other way round, we can use this then to define on the polynomials other forms of divergences. The relevance of such possibilities might become clearer, if one has in mind that our theory can be seen as a discrete version of stochastic integration for free Brownian motion; see, for example, \cite{BSp,DS}. In this language $\partial$ corresponds to the Malliavin gradient and $\partial^\star$ is the corresponding divergence operator, which gives the integration mapping. For such stochastic integration theories, one usually has at least two prominent stochastic integrals, namely the It\^o integral and the Stratonovich integral. On a linear level both theories are isomorphic, the difference is given by their multiplicative structure and different divergence operators. In the next example we will put the discrete versions of this two integration theories in our frame.
\end{remark}

\begin{example}
To simplify the notations, we will consider first the one-dimensional case $n=1$. At the end we will also indicate how to extend this to the multivariable case, for general finite $n$.

1) On one side, we have the polynomials $\C\langle x\rangle$ in one variable $x$ (of which one might think as a semicircular variable), with the usual multiplication of polynomials and the canonical derivative
\begin{equation}\label{eq:partial-xk}
\partial x^k=\sum_{p=0}^{k-1} x^{p}\otimes x^{k-p-1}
\end{equation}
and the canonical divergence
\begin{equation}\label{eq:partial-star-xk-xl}
\partial^\star(x^k\otimes x^l)=(x^k \otimes x^l)\sharp x=x^{k+l+1}.
\end{equation}
Let us now consider the Chebyshev polynomials (of the second kind) $u_k$, which are defined recursively by
$$u_0=1,\qquad u_1=x,\qquad xu_k=u_{k+1}+u_{k-1} \quad (k\geq 1).$$
It is easy to check that those $u_k$ behave with respect to $\partial$ like the $x^k$ in \eqref{eq:partial-xk}, namely
$$\partial u_k=\sum_{p=0}^{k-1} u_{p}\otimes u_{k-p-1}.$$
The mapping $x^k\mapsto u_k$ preserves of course also the additive structure, only the multiplication is changed under this mapping to
\begin{equation}\label{eq:mult-uk-ul}
u_k u_l= u_{k+l} + u_{k+l-2} + \cdots + u_{\vert k-l\vert}.
\end{equation}
If we equip the linear span $A$ of the $u_k$ (which is $\C\langle x\rangle$) with this multiplication, then we are exactly in the setting, for $n=1$, of Proposition \ref{prop:4.5}. We could now use as a divergence on the $u_k$ the image $\tilde\partial^\star$ of the divergence on the $x^k$, i.e., 
\begin{equation}\label{eq:partial-uk-ul}
\tilde\partial^\star(u_k\otimes u_l):= u_k u_1 u_l.
\end{equation}
This corresponds to Stratonovich integration and gives nothing new compared to the polynomials. But, motivated by \eqref{eq:partial-star-xk-xl}, we can also put another divergence $\partial^\star$ on $A$, given by
\begin{equation}\label{eq:new-partial-star}
\partial^\star(u_k\otimes u_l):= u_{k+l+1}.
\end{equation}
(Note that this is indeed different from \eqref{eq:partial-uk-ul}, since the multiplication on $A$, in terms of the $u_k$, is not the same as multiplication of the $x^k$.) This divergence corresponds then to It\^o integration.

2) In the case of the $u_k$ the cyclic derivative is given by
$$\D u_k = \sum_{p=0}^{\lfloor \frac{k+1}{2} \rfloor} (k-2p) u_{k-1-2p}.$$
(Note that this is different from the action of $\D$ on $x^k$ as the cyclic gradient involves the multiplication.) The action of $\D$ takes a much simpler form on the Chebyshev polynomials of the first kind $t_k$, which are defined by the recursion
$$t_0=2,\qquad t_1=x,\qquad xt_k=t_{k+1}+t_{k-1} \quad (k\geq 1).$$
Indeed, one finds then that $\D t_{k+1} = (k+1) u_k$ for each integer $k\geq 0$. For the choice of the cyclic gradient, we have again some freedom. Since $\{u_k \mid k\geq 0\}$ forms a linear basis of $\C\langle x\rangle$, we may define a linear map $\D^\star: \C\langle x \rangle \to \C\langle x\rangle$ by $\D^\star u_k := t_{k+1}$ for every integer $k\geq 0$. We see that $\D \circ \D^\star = \partial^\star \circ \sigma \circ \partial + \id$, since both sides map $u_k$ to $(k+1) u_k$; the latter confirms that $\D^\star$ is a cyclic divergence in the sense of Definition \ref{def:GDQring_cyclic_divergence}. Further, we note that the associated cyclic symmetrization operator $C := \D^\star \circ \D$ satisfies $C t_k = k t_k$ for each $k\geq 0$; on the other hand, the number operator $N := \partial^\star \circ \partial$ satisfies $N u_k = k u_k$ for each $k\geq 0$.

3) Let us finally also address briefly the multivariable versions of the constructions from above. 
First of all, we note that the polynomials of the form
$$u_{k_1}(x_{i_1}) u_{k_2}(x_{i_2}) \cdots u_{k_d}(x_{i_d})$$
for any $d\geq 0$ (where the expression is understood as the constant polynomial $1$ in the case $d=0$) and each choice of integers $k_1,\dots,k_d\geq 1$ and $1\leq i_1,\dots,i_d\leq n$ satisfying $i_1 \neq i_2 \neq \dots \neq i_d$ constitute a linear basis of $\C\langle x_1,\dots,x_n\rangle$.
This allows us to define, as an extension of \eqref{eq:new-partial-star}, a divergence of It\^o type as follows:
\begin{align*}
\lefteqn{\partial^\star_i\big(u_{k_1}(x_{i_1}) \cdots u_{k_{r-1}}(x_{i_{r-1}}) u_{k_r}(x_{i_r}) \otimes u_{l_1}(x_{j_1}) u_{l_2}(x_{j_2}) \cdots u_{l_d}(x_{j_s})\big)}\\
&= \begin{cases}
    u_{k_1}(x_{i_1}) \cdots u_{k_{r-1}}(x_{i_{r-1}}) u_{k_r + l_1 + 1}(x_{i_r}) u_{l_2}(x_{j_2}) \cdots u_{l_d}(x_{j_s}), & \text{if $i_r = i = j_1$}\\
    u_{k_1}(x_{i_1}) \cdots u_{k_{r-1}}(x_{i_{r-1}}) u_{k_r + 1}(x_{i_r}) u_{l_1}(x_{j_1}) \cdots u_{l_d}(x_{j_s}), & \text{if $i_r = i \neq j_1$}\\
    u_{k_1}(x_{i_1}) \cdots u_{k_r}(x_{i_r}) u_{l_1+1}(x_{j_1}) u_{l_2}(x_{j_2}) \cdots u_{l_d}(x_{j_s}), & \text{if $i_r \neq i = j_1$}\\
    u_{k_1}(x_{i_1}) \cdots u_{k_r}(x_{i_r}) u_1(x_i) u_{l_1}(x_{j_1}) \cdots u_{l_d}(x_{j_s}), & \text{if $i_r \neq i \neq j_1$}
   \end{cases}
\end{align*}
We leave it to the reader to verify that $\partial^\star=(\partial_1^\star,\dots,\partial_n^\star)$ is a divergence, in the sense of Definition \ref{def:GDQring_divergence}, for the multivariable GDQ ring $(\C\langle x_1,\dots,x_n\rangle, \mu,\partial)$, which we introduced in Proposition \ref{prop:GDQring_ncpolys}.

Remarkably, also the cyclic divergence that was considered in 2) admits an extension to the multivariable case; more precisely, we define $\D_j^\star$ on any expression of the form $u_{k_1}(x_{i_1}) u_{k_2}(x_{i_2}) \cdots u_{k_d}(x_{i_d})$ for $d\geq 2$ by
\begin{align*}
\lefteqn{\D^\star_j\big(u_{k_1}(x_{i_1}) \cdots u_{k_{d-1}}(x_{i_{d-1}}) u_{k_d}(x_{i_d})\big)}\\
&= \begin{cases}
    u_{k_1+k_d+1}(x_{i_1}) \cdots u_{k_{d-1}}(x_{i_{d-1}}), & \text{if $i_d = j = i_1$}\\
    u_{k_d+1}(x_{i_d}) u_{k_1}(x_{i_1}) \cdots u_{k_{d-1}}(x_{i_{d-1}}), & \text{if $i_d = j \neq i_1$}\\
    u_{k_1+1}(x_{i_1}) \cdots u_{k_{d-1}}(x_{i_{d-1}}) u_{k_d}(x_{i_d}), & \text{if $i_d \neq j = i_1$}\\
    u_1(x_i) u_{k_1}(x_{i_1}) \cdots u_{k_d}(x_{i_d}), & \text{if $i_d \neq j \neq i_1$}
   \end{cases}
\end{align*}
and on $u_k(x_i)$ for any $k\geq 0$, which corresponds to the remaining cases $d=0$ and $d=1$, by
$$\D_j^\star u_k(x_i) = \begin{cases} t_{k+1}(x_i), & \text{if $i=j$}\\ u_1(x_j) u_k(x_i), & \text{if $i\neq j$}\end{cases}.$$
We leave it to the reader to check that $\D^\star=(\D^\star_1,\dots,\D^\star_n)$ yields, in the meaning of Definition \ref{def:GDQring_cyclic_divergence}, a cyclic divergence for $(\C\langle x_1,\dots,x_n\rangle, \mu,\partial)$ which is compatible with the divergence $\partial^\star$ that we introduced above.

It is worthwhile to check that the associated number operator $N := \partial^\star \circ \partial$ satisfies
$$N u_{k_1}(x_{i_1}) u_{k_2}(x_{i_2}) \cdots u_{k_d}(x_{i_d}) = (k_1 + \dots + k_d) u_{k_1}(x_{i_1}) u_{k_2}(x_{i_2}) \cdots u_{k_d}(x_{i_d})$$
while the cyclic symmetrization operator $C := \D^\star \circ \D$ satisfies $C t_k(x_i) = k t_k(x_i)$ and, for $d\geq 2$ and $i_d\not=i_1$,
$$C u_{k_1}(x_{i_1}) \cdots u_{k_d}(x_{i_d}) = \sum_{r=1}^d k_{r+1}u_{k_{r+1}}(x_{i_{r+1}}) \cdots u_{k_d}(x_{i_d}) u_{k_1}(x_{i_1}) \cdots u_{k_r}(x_{i_r}).$$
In the case $i_1=i_d$ one has to notice that the action of $C$ does not change if one rotates the last factor to the beginning.
\end{example}

\begin{example}
In \cite[Appendix I]{Voi10}, Voiculescu discussed the ``topological'' GDQ ring that consists of the algebra $\O(K)$ of all germs of holomorphic functions around some non-empty compact set $K\subset \C$ and the comultiplication-derivation $\partial$ that is defined by
$$(\partial f)(z_1,z_2) := \frac{f(z_1)-f(z_2)}{z_1-z_2}.$$
Further, he defined a grading $L$ on $\O(K)$ by $(Lf)(z) := z f'(z) + f(z)$; therefore, the associated number operator $N$ is $(Nf)(z) = z f'(z)$. In fact, one can easily check that $(\partial^\star F)(z) := z F(z,z)$ defines a ``topological'' divergence; obviously, we also have that $N = \partial^\star \circ \partial$.

Note that the cyclic derivative $\D$ is determined by $(\D f)(z) = f'(z)$. We find that a ``topological'' cyclic divergence compatible with $\partial^\star$ is given by $(\D^\star f)(z) := z f(z)$; thus, the associated cyclic symmetrization operator $C := \D^\star \circ \D$ coincides with $N$.
\end{example}

\section{VANISHING OF THE GRADIENTS}\label{sec:vanishing_gradients}

In \cite[Theorem 2]{Voi00a}, Voiculescu characterized those noncommutative polynomials $p\in \C\langle x_1,\dots,x_n\rangle$ that have vanishing cyclic gradient $\D p$ or which are annihilated by the cyclic symmetrization operator $C$; in fact, he obtained that
$$\ker \D = \C \oplus \{[p_1,p_2] \mid p_1,p_2 \in \C\langle x_1,\dots,x_n\rangle\} = \ker C.$$
Here, we revisit this remarkable result by using the language of multivariable GDQ rings. This is the content of the following theorem; however, it remains unclear how one recovers in that formalism that $\ker \D$ is spanned by the scalars and commutators of noncommutative polynomials.

\begin{theorem}
Let $(A,\mu,\partial)$ be a multivariable GDQ ring and let $\D$ be the associated cyclic gradient.
\begin{enumerate}
 \item It holds true that $\{[a_1,a_2] \mid a_1,a_2 \in A\} \subseteq \ker \D$; in the case that $A$ is unital with unit element $1_A$, one also has that $\C 1_A \subseteq \ker \D$. 
 \item Suppose that $\partial^\star$ is a divergence for $(A,\mu,\partial)$ and let $L$ be the induced weak grading of $(A,\mu,\partial)$ as constructed in Lemma \ref{lem:divergence_induces_grading}. Suppose further that a cyclic divergence $\D^\star = (\D_1^\star,\dots,\D_n^\star)$ compatible with $\partial^\star$ exists and let $C$ be the associated cyclic symmetrization operator. Then $\ker \D \subseteq \ker C$ holds, with equality if the grading $L$ is injective.
\end{enumerate}
\end{theorem}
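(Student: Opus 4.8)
The plan is to prove the two items separately, each by a direct computation with the cyclic derivatives, and in the second item to extract the two inclusions from the commutation relation between $\D_j$ and the cyclic symmetrization operator established in Lemma~\ref{lem:symmetrization_operator_cyclic_derivative}.

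For item (i), I would start from the Leibniz rule \eqref{eq:GDQring_Leibniz-cyclic} for the cyclic derivatives, namely $\D_i(a_1 a_2) = (\tpartial_i a_1) \sharp a_2 + (\tpartial_i a_2) \sharp a_1$. Applying this to $a_1 a_2$ and to $a_2 a_1$ and subtracting, the two terms on the right-hand side cancel pairwise, so $\D_i([a_1,a_2]) = \D_i(a_1 a_2) - \D_i(a_2 a_1) = 0$ for each $i=1,\dots,n$; hence $[a_1,a_2]\in\ker\D$. For the statement about scalars in the unital case, the Leibniz rule forces $\partial_i 1_A = 0$ (as already noted in the excerpt after \eqref{eq:GDQring_Leibniz}), whence $\D_i 1_A = (\mu\circ\sigma)(\partial_i 1_A) = 0$, and by linearity $\C 1_A \subseteq \ker\D$.

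For item (ii), I would use the commutation relation \eqref{eq:symmetrization_operator_cyclic_derivative}, that is, $\D_j \circ C = L \circ \D_j$ for $j=1,\dots,n$. If $a\in\ker\D$, then $\D_j a = 0$ for every $j$, so $\D_j(Ca) = L(\D_j a) = L(0) = 0$ for every $j$; recalling that $C = \D^\star\circ\D$ and that $\D^\star$ is linear in its arguments, one sees $Ca$ lies in $\ker\D$—but what we actually want is $Ca \in \ker C$, i.e.\ $a\in\ker C$. Here I note that $Ca = \D^\star(\D a) = \D^\star(0) = 0$ directly from $a\in\ker\D$ and linearity of $\D^\star$; so in fact $\ker\D\subseteq\ker C$ holds for the trivial reason that $C$ factors through $\D$. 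For the reverse inclusion under the injectivity hypothesis on $L$: if $Ca = 0$, then for each $j$ we have $L(\D_j a) = \D_j(Ca) = \D_j 0 = 0$, and injectivity of $L$ gives $\D_j a = 0$ for all $j$, i.e.\ $a\in\ker\D$. Combining the two inclusions yields $\ker\D = \ker C$ when $L$ is injective.

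The computation is routine throughout; the only point requiring care is recognizing that the genuinely nontrivial direction is $\ker C \subseteq \ker\D$, which is exactly where the injectivity of the weak grading $L$ enters, whereas $\ker\D\subseteq\ker C$ is essentially formal because $C = \D^\star\circ\D$. I do not anticipate a real obstacle; the main thing to get right is bookkeeping with the definitions of $C$, $\D$, and $L$ and the correct invocation of Lemma~\ref{lem:symmetrization_operator_cyclic_derivative}.
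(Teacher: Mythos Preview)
Your proposal is correct and follows essentially the same route as the paper: item (i) is handled via the symmetry of the cyclic Leibniz rule \eqref{eq:GDQring_Leibniz-cyclic}, and item (ii) uses that $C=\D^\star\circ\D$ for the easy inclusion together with the commutation relation of Lemma~\ref{lem:symmetrization_operator_cyclic_derivative} and injectivity of $L$ for the reverse one. The brief detour in your discussion of (ii) is unnecessary, but your final argument is exactly the paper's.
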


\begin{proof}
(i) Every commutator $[a_1,a_2]$ with $a_1,a_2\in A$ belongs to $\ker \D$, because \eqref{eq:GDQring_Leibniz-cyclic} provides an expression for $\D_j(a_1 a_2)$ which is symmetric in $a_1$ and $a_2$; that moreover every scalar multiple of $1_A$ belongs to $\ker \D$ is trivial.

(ii) By definition of the cyclic symmetrization operator $C$, it is always true that $\ker \D \subseteq \ker C$. Under the additional assumption of injectivity of $L$, it follows from the commutation relation provided by Lemma \ref{lem:symmetrization_operator_cyclic_derivative} that also $\ker C \subseteq \ker \D$, which gives equality in this case.
\end{proof}

It is very natural to ask for an analogous description of the kernels of $\partial$. In the unital case, every scalar multiple of $1_A$ clearly lies in $\ker \partial$, but it remains to decide if the inclusion $\C 1_A \subseteq \ker\partial$ can be strict. For proper $(A,\mu,\partial)$, it is immediate from Lemma \ref{lem:proper} that $\C 1_A = \ker\partial$. This remains true in the operator-valued setting; in fact, we have the following algebraic version of \cite[Lemma 3.4]{Voi00b}.

For an arbitrary unital complex algebra $B$, we denote by $B\langle x_1,\dots,x_n\rangle$ the algebra of all \emph{$B$-valued polynomials} in the formal non-commuting variables $x_1,\dots,x_n$. On $B\langle x_1,\dots,x_n\rangle$, we define the noncommutative derivatives
$$\partial_{x_1:B},\dots,\partial_{x_n:B}:\ B\langle x_1,\dots,x_n\rangle \to B\langle x_1,\dots,x_n\rangle \otimes B\langle x_1,\dots,x_n\rangle$$
as the unique $B\langle x_1,\dots,x_n\rangle \otimes B\langle x_1,\dots,x_n\rangle$-valued derivations on $B\langle x_1,\dots,x_n\rangle$ which satisfy $\partial_{x_j:B} (x_i) = \delta_{i,j} 1 \otimes 1$ for $i,j=1,\dots,n$ and $\partial_{x_j:B} (b) = 0$ for every $b\in B$ and for $j=1,\dots,n$.

\begin{theorem}
Let $p\in B\langle x_1,\dots,x_n\rangle$ be given. Suppose that $\partial_{x_j:B} (p) = 0$ for all $j=1,\dots,n$. Then $p\in B$.
\end{theorem}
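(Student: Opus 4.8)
The plan is to recover, inside $B\langle x_1,\dots,x_n\rangle$, the number operator that counts the $x$-degree from the derivatives $\partial_{x_1:B},\dots,\partial_{x_n:B}$, in complete analogy with the scalar situation discussed in Section \ref{subsec:number_operator}; once this is done, $\partial_{x_j:B}(p)=0$ for all $j$ forces $p$ to have $x$-degree $0$, which is precisely the assertion.

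To set things up, recall that every element of $B\langle x_1,\dots,x_n\rangle$ is a finite linear combination of monomials $b_0 x_{i_1} b_1 x_{i_2}\cdots x_{i_k} b_k$ with $k\geq 0$, $1\leq i_1,\dots,i_k\leq n$, and $b_0,\dots,b_k\in B$; counting the number $k$ of $x$-letters yields a vector-space decomposition $B\langle x_1,\dots,x_n\rangle=\bigoplus_{k\geq 0} B^{(k)}\langle x_1,\dots,x_n\rangle$ with $B^{(0)}\langle x_1,\dots,x_n\rangle=B$. Let $N\colon B\langle x_1,\dots,x_n\rangle\to B\langle x_1,\dots,x_n\rangle$ be the associated \emph{number operator}, acting as multiplication by $k$ on $B^{(k)}\langle x_1,\dots,x_n\rangle$; then $\ker N=B$. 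I then claim the $B$-valued counterpart of \eqref{eq:number_operator}, namely
$$N(p)=\sum_{j=1}^n (\partial_{x_j:B}p)\sharp x_j\qquad\text{for all }p\in B\langle x_1,\dots,x_n\rangle,$$
where $\sharp$ denotes the canonical action of $B\langle x_1,\dots,x_n\rangle\otimes B\langle x_1,\dots,x_n\rangle$ on $B\langle x_1,\dots,x_n\rangle$ from Section \ref{subsec:bimodules}, so that $(u_1\otimes u_2)\sharp a=u_1 a u_2$. Both sides are linear, so it suffices to verify this on a monomial $m=b_0 x_{i_1} b_1\cdots x_{i_k} b_k$; the Leibniz rule together with $\partial_{x_j:B}(x_i)=\delta_{i,j}1\otimes1$ and $\partial_{x_j:B}(b)=0$ for $b\in B$ gives $\partial_{x_j:B}(m)=\sum_{p=1}^k\delta_{j,i_p}(b_0 x_{i_1}\cdots b_{p-1})\otimes(b_p x_{i_{p+1}}\cdots b_k)$, and summing over $j$ while applying $\sharp x_j$ reinserts the deleted letter $x_{i_p}$ in its original slot, producing $\sum_{p=1}^k m=k\,m=N(m)$.

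With the identity in hand the theorem follows at once: if $\partial_{x_j:B}(p)=0$ for every $j=1,\dots,n$, then $N(p)=0$, hence $p\in\ker N=B$. (Alternatively, one could observe that $\big(B\langle x_1,\dots,x_n\rangle,\mu,(\partial_{x_j:B})_j\big)$ is itself a unital multivariable GDQ ring for which $N$ is a derivation vanishing on $B$, and invoke a relative version of the universality argument behind Lemma \ref{lem:proper}; but the direct computation above is shorter.) I do not expect a genuine obstacle here; the only point deserving a little care is the bookkeeping of the $B$-coefficients inside the monomial normal form, so that in the key identity the reinserted letter $x_{i_p}$ lands on the correct side of each $b_p$ — but with the above normal form fixed this is entirely routine.
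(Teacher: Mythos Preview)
Your argument is correct but follows a different route from the paper's. You extend the number-operator identity \eqref{eq:number_operator} to the $B$-valued setting and read off $p\in\ker N=B$; this is the natural $B$-valued analogue of Section \ref{subsec:number_operator} and of the canonical divergence of Remark \ref{rem:prototypical_divergence}. The paper instead passes to the $B$-balanced tensor product and uses the derivation $d(p)=p\otimes_B 1-1\otimes_B p$ into $B\langle x_1,\dots,x_n\rangle\otimes_B B\langle x_1,\dots,x_n\rangle$: since $d$ vanishes on $B$, one has $d(p)=\sum_{j}(\partial_{x_j:B}p)\sharp d(x_j)$, whence $d(p)=0$ under the hypothesis, and applying $\id\otimes_B\ev_0$ (with $\ev_0$ the evaluation at $0$) returns $p=\ev_0(p)\in B$. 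Your approach is shorter and stays entirely within the ordinary tensor product, at the mild cost of fixing a monomial normal form to verify the key identity by hand. The paper's approach is coordinate-free and ties the result directly to the universality framework around Lemma \ref{lem:proper}, making visible that the real content is that $\partial_{x_1:B},\dots,\partial_{x_n:B}$ already generate every derivation vanishing on $B$; the passage to $\otimes_B$ is what makes the inner derivation $d$ kill $B$ without any computation.
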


\begin{proof}
To begin with, we introduce the linear map $\ev_0: B\langle x_1,\dots,x_n\rangle \to B$ by $\ev_0(p)=p(0,\dots,0)$ for every $p\in B\langle x_1,\dots,x_n\rangle$; since $\ev_0$ clearly is a $B$-bimodule map, we get by $\id \otimes_B \ev_0$ a $B$-bimodule map on $B\langle x_1,\dots,x_n\rangle \otimes_B B\langle x_1,\dots,x_n \rangle$.
Next, we consider the derivation
$$d:\ B\langle x_1,\dots,x_n \rangle \to B\langle x_1,\dots,x_n\rangle \otimes_B B\langle x_1,\dots,x_n \rangle$$
that is given by $d(p) = p \otimes_B 1 - 1 \otimes_B p$. We observe that $d$ vanishes on $B$; thus, we get that
$$d(p) = \sum^n_{j=1} (\partial_{x_j:B} p) \sharp d(x_j).$$
So, if $\partial_{x_j:B} (p) = 0$ for $j=1,\dots,n$, we necessarily have that $d(p)=0$, and applying $\id \otimes_B \ev_0$ yields that $p = \ev_0(p) \in B$, as claimed.
\end{proof}

\begin{acknowledgements} 
We thank an anonymous referee for carefully reading our manuscript and for some valuable comments. In particular, this has led to a strengthening of the statements made in Lemma \ref{lem:irrotational_properties} Item \eqref{it:irrotational_properties-ii} and in Lemma \ref{lem:cyclically_irrotational_properties} Item \eqref{it:cyclically_irrotational_properties-ii}, which allowed to relax the assumptions of Theorem \ref{thm:characterization_cyclic_gradients_GDQ_v3} and Theorem \ref{thm:characterization_free_gradients_GDQ_v2}.
\end{acknowledgements}

\bibliographystyle{amsalpha}
\bibliography{free_differential_calculus_ref}

\end{document}